\documentclass[12pt]{article}

\usepackage[utf8]{inputenc}
\usepackage{amsmath, amssymb, amsthm}
\usepackage{xcolor}
\usepackage{geometry}
\usepackage{hyperref,cite}
\usepackage{subfig} 
\usepackage{multicol}
\usepackage{blindtext}
\usepackage{mathrsfs}
\usepackage{stmaryrd}
\usepackage{tikz-cd}
\usepackage{booktabs}
\usepackage{cite}
\usepackage{float}
\newtheorem{theorem}{Theorem}[section]
\newtheorem{lemma}[theorem]{Lemma}
\newtheorem{corollary}[theorem]{Corollary}
\newtheorem{proposition}[theorem]{Proposition}
\theoremstyle{definition}
\newtheorem{definition}[theorem]{Definition}

\theoremstyle{remark}
\newtheorem{remark}[theorem]{Remark}

\newcommand{\R}{\mathbb{R}}

\newcommand{\Z}{\mathbb{Z}}

\newcommand{\rT}{{\sf S}}
\newcommand{\cT}{{\sf T}}

\newcommand{\rG}{{\sf H}}
\newcommand{\dis}{(s_0^{10}-s_4^{10})} 
\newcommand{\hS}{\hat{S}}
\newcommand{\hD}{\hat{\Delta}}
\newcommand{\hK}{\hat{K}}

\title{Gauss-Manin Connection in Disguise: Holomorphic Anomaly Equations and Higher Genus Real Gromov-Witten Invariants}
\author{Felipe Espreafico }
\date{\today}
\begin{document}
\maketitle
\begin{abstract}
    We construct an algebraic framework for the extended holomorphic anomaly equations following ideas of Alim, Movasati, Scheidegger and Yau. We choose the quintic threefold as our main model and use this framework to solve the equations and fix the ambiguity up to genus 5. We obtain predictions for real Gromov-Witten invariants, or, equivalently, real Gopakumar-Vafa invariants that are integers and agree with their complex counterparts modulo 2. In the end, we discuss how the framework could be extended and compute the same predictions for the bicubic up to genus 9. 
\end{abstract}
\section{Introduction}

Mirror symmetry is a very celebrated idea in Physics and Mathematics. It started as a duality between Calabi Yau threefolds, relating the variation of Kähler structure of one threefold to the variation of complex structure of another threefold, its {\it mirror manifold}. The name comes from the fact that mirror manifolds have mirror-symmetric Hodge diamonds.

The first applications of such theory in Mathematics were in enumerative geometry, when virtual counts of rational curves on Calabi-Yau threefolds were shown to be related to period integrals on the mirror threefold. These counts of curves are computed via integrals on the moduli space of stable maps and are called Gromov-Witten invariants. The manifold whose invariants we are interested in is called $A$-model, while the manifold where the periods are computed is referred to as the $B$-model. The first of such predictions, for rational curves, was done in~\cite{candelas_pair_1991} for the quintic threefold. The theory was extended to treat counts of higher genus curves via a recursive method, the BCOV holomorphic anomaly equations, which made it possible to deduce predicted generating functions for higher genus counts (called \textit{topological string amplitudes}) in terms of lower genus ones, up to a holomorphic ambiguity~\cite{bershadsky_holomorphic_1993,bershadsky_kodaira-spencer_1994}. Of course, in order to get acurate predictions for invariants, one needs to find a way to fix such ambiguity. 

The work of Yamaguchi and Yau~\cite{yamaguchi_topological_2004}, showed that the solutions of these equations have certain polynomiality properties that imply that the ambiguities can be determined by a finite number of conditions. This led to the work~\cite{huang_topological_2009}, where a way to fix this ambiguity was worked out for some examples. They use particular conditions on how the amplitudes behave in each of the special points of the B-model variation of complex structure and classical algebro-geometric results on the nonexistence of curves of certain genus and degrees. In particular, for the quintic threefold, such numbers were predicted up to genus 51. It is important to remark that the BCOV anomaly equation, the polynomiality and the other conditions used to compute the topological string amplitudes are intrinsic to the B-side geometry (except for the genus-degree results mentioned), and there is no proof that, in general, the amplitudes are actualy computing Gromov-Witten invariants. For the quintic, this was recently proved in the works~\cite{guo_structure_2018,chang_polynomial_2021,chang_conifold_2026, chang_bcov_2026}.

An interesting interpretation for the geometry of the $B$-model came in the work of Alim, Movasati, Scheidegger and Yau~\cite{alim_gaussmanin_2016}, based on earlier works by Movasati for the elliptic curve and the quintic threefold~\cite{movasati_ramanujan_2012, movasati_modular-type_2015}. The authors show that, for Calabi-Yau threefolds, one can consider the complex structure moduli space enhanced with the de Rham cohomology bundle and that this space gives rise to special vector fields and the holomorphic anomaly equation can be expressed in terms of such vector fields. This special vector fields form a Lie Algebra, called AMSY Lie Algebra, that encodes all the structure from the anomaly equation. The interesting aspect of such construction is that, besides being purely algebraic and allowing future arithmetic generalizations, it also provides a clear analogy between the polynomiality enjoyed by the generating functions and the classical theory of modular forms. Some other works explored this ideas for other cases than Calabi-Yau threefold, as~\cite{movasati_gaussmanin_2021,nikdelan_modular_2020},\cite{alim_algebra_2022} and can attest for the generality of these contructions. 

Parallel to the development of counts of complex curves in Calabi Yau threefolds, the real case was also considered. For the real quintic, the genus zero counts were first computed by Walcher~\cite{walcher_opening_2007}. These are virtual counts of rational curves with an involution (or complex curves with boundary on the real Lagrangian). In that case, the periods considered on the mirror now involve a relative period, i.e., an integral with boundary on a pair of holomorphic curves. Walcher also later developed an extended version of the BCOV holomorphic anomaly equation~\cite{walcher_extended_2009, walcher_evidence_2009}, for which polynomiality was proved in~\cite{alim_polynomial_2007}. Notice that the first generating functions considered by Walcher (in~\cite{walcher_extended_2009}) included only ``oriented'' curves with boundary, which excluded curves that contribute to the real count. This was corrected in the second paper~\cite{walcher_evidence_2009}, but the equation satisfied ended up being the same up to initial conditions. Such virtual counts of real curves were later made rigorous in the work of Georgieva and Zinger~\cite{georgieva_real_2015, georgieva_real_2018} as real Gromov-Witten invariants in all genera. However, in contrast to the complex case, no computation was available for compact models in genus higher than two. For toric models, as local $\mathbb P^2$, the ambiguity was fixed via the real topological vertex, mainly developed in~\cite{krefl_real_2009, krefl_real_2010, krefl_extended_2011}. The work~\cite{marino_non-perturbative_2026} also studies the holomorphic anomaly equation for local $\mathbb P^2$ in the real case, using resurgence techniques.

In this paper, our goal is twofold. First, we follow the ideas of~\cite{alim_gaussmanin_2016} to give an algebraic construction of the higher genus B-model of the real mirror quintic, involving the algebraic de Rham cohomology and Hodge theoretical considerations. The main difference compared to the complex situation is that we have to consider relative de Rham cohomology and mixed Hodge structures. The first part of this program, involving genus zero, was carried out by the author in~\cite{espreafico_gauss-manin_2026}. There, we constructed the moduli space of enhanced mirror quintics and proved that one can recover the count of disks (real genus zero curves) from the Gauss-Manin connection. In this paper, we finish the program for the real mirror quintic by defining a Lie algebra of vector fields and showing that the extended holomorphic anomaly equation can be expressed in terms of these vector fields.  

The second goal is to use the framework above to solve the extended anomaly equations and fix the ambiguity for the first few genera. We use the gap condition at the conifold point and the vanishing of complex invariants to be able to compute Gopakumar-Vafa invariants up to genus 5 for the quintic threefold. Assuming that $n_{6,5}^{\mathbb R}$ is computable, we can go up genus 8. In the end, we explain how to extend this program to other models, which can be done once we have an holomorphic curve that correspond to the extra period, the role played by a pair of conics for the quintic. The computations, however, can be done starting only from the Picard-Fuchs data and we carry on the program for the bicubic, obtaining the real GV invariants up to genus 9. Assuming we know $n^{\mathbb R}_{10,9}$, we can go up to genus 12. Our numbers are all integers and agree with the complex ones modulo 2. The proposed computation of $n_{6,5}^{\mathbb R}$ for the quintic and $n^{\mathbb R}_{10,9}$ for the bicubic use the comparision between GV and DT invariants. An arithmetic version of similar predictions was considered in~\cite{espreafico_motivic_2024}.

Although the algebraic framework is not particularly important for the computation, i.e., it would be possible to do it only from the equations in~\cite{alim_polynomial_2007}, it gives a nice point of view and simplifies the expansions in the holomorphic limits, since those are solutions of a vector field. Some other works in the literature gave solutions integrating the anomaly equation, but, up to our knowledge, this is the first work where the ambiguities were fixed for the compact models beyond genus 2.

\subsection{Main Results}
The main theorem in the paper is the following, which is an algebraic version of the extended holomorphic anomaly equation, in the style of~\cite{alim_gaussmanin_2016}, for the real quintic.

\begin{theorem}\label{alg-HAE-GMCD-Int}
    The B-side topological string partition functions $G^{\chi}$, $\chi>0$  of Euler characteristic $\chi$ can be seen as regular functions on $\rT$ in the variables $s_0,\dots,s_8$ given as solutions to the recursive differential equations:
    \begin{align}
        {\sf R}_i G^{\chi} = 0, i=1,3\\
        {\sf R}_2 G^{\chi} = \chi G^{\chi}\\
        {\sf R}_4 G^{\chi} = \frac1{5^8} \left(\sum_{\substack{\chi_1+\chi_2 = \chi-2\\ \chi_1,\chi_2\geq0}}{\sf R}_0G^{\chi_1}{\sf R}_0G^{\chi_2}+{\sf R}_0^2G^{\chi-2}\right)\\
        {\sf R}_7 G^{\chi} = -\frac1{5^8}{\sf R}_0 G^{\chi-1}
    \end{align}
    \end{theorem}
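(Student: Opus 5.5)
The plan follows the strategy of~\cite{alim_gaussmanin_2016}: translate Walcher's extended holomorphic anomaly equation, in the polynomial form of~\cite{alim_polynomial_2007}, into the algebraic coordinates $s_0,\dots,s_8$ on $\rT$. The main tool is the dictionary between the generators of the polynomial ring and the coordinates, which comes from the genus zero construction in~\cite{espreafico_gauss-manin_2026}.

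\textbf{Step 1: express the propagators as regular functions on $\rT$.} I will write the BCOV propagators $S^{zz}, S^z, S$, the disk propagator $\Delta^z, \Delta$, the Christoffel-type quantity $K_z$ and the variable $z$ in terms of $s_0,\dots,s_8$. The identification will use the modular vector field ${\sf R}_0$ and the Gauss--Manin connection matrix on $\rT$. The relations I expect are $z\sim s_4^5/s_0^5$, the propagators being suitable ratios of the $s_i$ by powers of $\dis$, and $\Delta$-type variables built from $s_7,s_8$, which encode the relative period. I will then check that the differential relations satisfied by these propagators in~\cite{alim_polynomial_2007} follow from the algebraic vector field ${\sf R}_0$, with the ambiguity functions fixed by the choice of the algebraic splitting.

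\textbf{Step 2: dualise the vector fields.} I will show that ${\sf R}_1,\dots,{\sf R}_8$, defined through the action of the algebraic group on $\rT$, act on the propagator variables as the partial derivatives $\partial/\partial S^{zz}$, $\partial/\partial S^z$, $\partial/\partial \Delta^z$, and so on, up to constants. This holds once one restricts to functions annihilated by the weight-type fields ${\sf R}_1,{\sf R}_3$. The field ${\sf R}_2$ is the grading (Euler) field, which gives ${\sf R}_2G^\chi=\chi G^\chi$; its weight comes from the transformation of $G^\chi$ as a section of $\mathcal L^{\chi}$. The invariance ${\sf R}_iG^\chi=0$ for $i=1,3$ is equivalent to $G^\chi$ depending on the $s_i$ only through the polynomial generators. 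Concretely, I will compute ${\sf R}_i$ applied to each generator and compare with the Jacobian of the change of variables.

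\textbf{Step 3: transport the anomaly equation.} Writing the extended holomorphic anomaly equation in the polynomial variables, the $\partial/\partial S^{zz}$ equation and the $\partial/\partial \Delta^z$ equation become the ${\sf R}_4$ and ${\sf R}_7$ equations. The covariant derivatives $D_z$ become ${\sf R}_0$, after rescaling by the holomorphic $3$-form normalisation. The constant $5^{-8}$ is produced by the Yukawa coupling $5/((1-5^5z)z^3)$ and by the chosen normalisation of $s_0$. The remaining anomaly equations, in $\partial/\partial S^z$, $\partial/\partial S$ and $\partial/\partial K_z$, must be shown to follow from the ${\sf R}_1,{\sf R}_3$ invariance and the grading. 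Finally, regularity of $G^\chi$ on $\rT$ follows from polynomiality in the propagators together with at most a power of $\dis$ in the denominator. By~\cite{alim_polynomial_2007} the denominators are only of discriminant type, and $\dis$ is invertible on $\rT$.

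\textbf{Main obstacle.} I expect the hard part to be Step 2 for the relative part: showing that ${\sf R}_7$ acts as $\partial/\partial\Delta^z$ and commutes appropriately with the other fields, so that the $\Delta$-equation closes without extra terms. The difficulty is that the mixed Hodge structure lets the splitting of the relative cohomology add correction terms. I will first verify it by explicit computation on the generators, using the Gauss--Manin matrix, and then fix any discrepancy by adjusting the choice of $s_7,s_8$.
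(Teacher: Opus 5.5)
Your overall architecture is the paper's: build the dictionary, convert $D_z$, $\partial/\partial S^{zz}$ and $\partial/\partial\Delta^z$ into ${\sf R}_0$, ${\sf R}_4$ and ${\sf R}_7$, then transport Proposition~\ref{al-HAE}. There is, however, a genuine gap in how you handle ${\sf R}_3$.

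\textbf{The gap.} ${\sf R}_3$ is not a weight-type field, and ${\sf R}_3G^\chi=0$ is not equivalent to ``depending only on the polynomial generators.'' From ${\sf R}_3=(\cdots)\partial_{s_1}+s_6\partial_{s_2}+s_5\partial_{s_3}+s_7\partial_{s_8}$ and Table~\ref{tab:dictionary}, one checks directly that ${\sf R}_3$ kills $z,\hat G_{z\bar z},\hat S^{zz},\hat\Delta^z,s_0$. On the other generators it acts nontrivially:
\[
{\sf R}_3\hat K_z=-\tfrac15\hat G_{z\bar z},\qquad
{\sf R}_3\hat S^z=-\tfrac15\hat G_{z\bar z}\hat S^{zz},\qquad
{\sf R}_3\hat\Delta=-\tfrac15\hat G_{z\bar z}\hat\Delta^z .
\]
Hence, on functions of the generators,
\[
{\sf R}_3=-\tfrac15 G_{z\bar z}\bigl(\partial_{K_z}+S^z\partial_S+S^{zz}\partial_{S^z}+\Delta^z\partial_\Delta\bigr),
\]
which is Lemma~\ref{lem:R3}. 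So ${\sf R}_3G^\chi=0$ is exactly the third equation of Proposition~\ref{al-HAE}, the $G_{z\bar z}$-component of $\partial_{\bar z}\mathcal G^\chi$. It carries real content of the anomaly equation and does not follow from polynomiality: $\hat K_z$ itself is a function of the generators with ${\sf R}_3\hat K_z\neq 0$. Your Step~3 proposes to deduce the $K_z/S^z/S$ equation from ${\sf R}_3$-invariance, which is the converse of what is needed. As written, nothing in your plan establishes ${\sf R}_3G^\chi=0$.

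\textbf{What encodes polynomiality.} The fields that express ``$\hat{\mathcal G}^\chi$ depends only on the Alim--L\"ange generators'' are ${\sf R}_1$ and ${\sf R}_2$. Both annihilate $z,K_z,S^{zz},S^z,S,\Delta^z,\Delta$. Meanwhile ${\sf R}_1$ rescales $G_{z\bar z}$, and ${\sf R}_2$ rescales $e^{-K}=5s_0^2$ (weight $1$) and $G_{z\bar z}$ (weight $-1$). Since the nine dictionary functions are \'etale coordinates (Lemma~\ref{lem:dict-jac}), independence of $\hat{\mathcal G}^\chi$ from $e^{-K}$ and $G_{z\bar z}$ gives ${\sf R}_1\hat{\mathcal G}^\chi={\sf R}_2\hat{\mathcal G}^\chi=0$. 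Multiplying by $s_0^{2\chi}$ then yields ${\sf R}_1G^\chi=0$ and ${\sf R}_2G^\chi=\chi G^\chi$.

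\textbf{Smaller corrections.}
\begin{itemize}
\item The conversions are not ``up to constants.'' They are $5^{-6}G_{z\bar z}^{-2}\partial_{S^{zz}}=-s_0^4{\sf R}_4$ and $5^{-7}G_{z\bar z}^{-1}\partial_{\Delta^z}=i s_0^2{\sf R}_7$. These powers of $G_{z\bar z}^{-1}$ and $s_0^2$ must match the factor $5i^\chi s_0^{2\chi}G_{z\bar z}^{-1}$ carried by each ${\sf R}_0$, and that matching is what produces the constant $5^{-8}$.
\item The coordinate relation is $z=s_4^{10}/(5^5s_0^{10})$; it is $\sqrt z$ that is proportional to $s_4^5/s_0^5$.
\item The step you flag as the main obstacle is immediate. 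The variable $s_7$ occurs only in $\hat\Delta^z$ and ${\sf R}_7=\partial_{s_7}$, so no adjustment of $s_7,s_8$ is needed.
\end{itemize}
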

\bigskip

Here, $\rT$ is the space parameterizing mirror quintics with a pair of conics $C_{\pm}$ and a basis of the relative de Rham cohomology $H_{\rm dR}^3(Y,C_{+}\cup C_{-})$ that respects the Hodge structure and have constant intersection product (see definition~\ref{relat-enha-mq} and~\cite{espreafico_gauss-manin_2026}). The $\sf R_i$ are vector fields on $\rT$, defined purely algebraically and form a Lie algebra over $\mathcal O_{\rT}$ (see~\ref{thm:lie-algebra}).

Theorem~\ref{alg-HAE-GMCD-Int} only allows computation of $G^{\chi}$ up to an ambiguity that depends on $s_4$ and $s_0$. 

\begin{theorem}\label{thm:ambiguity-intro}
    Assuming standard growth conditions (see~\ref{amb-ansatz}), the ambiguity to the solutions of the extended holomorphic anomaly equation has the form $s_0^{2\chi}h^\chi$, where
    \begin{equation}\label{eq:amb-ansatz}
h^{\chi}(s_0,s_4) \;=\; \frac{1}{s_0^{5\chi}}\!\!\sum_{\substack{0\le k\le 3\chi\\ k\equiv\chi\ (2)}}\!\!
  b_k\, s_4^{5k}\,\bigl(s_0^{10}-s_4^{10}\bigr)^{\frac{\chi-k}{2}} .
\end{equation}

The unknown coefficients $b_k$ can be fixed up to $\chi = 4$ by assuming the gap condition around the conifold, the expression for the contribution of constant maps for even $\chi$ and the fact that the vanishing of complex invariants implies vanishing of the real invariants. 
\end{theorem}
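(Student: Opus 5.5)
The plan has two halves: first justify the shape of the ambiguity, then show that the finitely many coefficients $b_k$ are fixed for $\chi\le 4$.

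\textbf{Form of the ambiguity.} By Theorem~\ref{alg-HAE-GMCD-Int}, two solutions with the same lower-$\chi$ data differ by a regular function $f$ on $\rT$. This $f$ is killed by ${\sf R}_1,{\sf R}_3,{\sf R}_4,{\sf R}_7$ and satisfies ${\sf R}_2 f=\chi f$.

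1. The annihilated vector fields generate the directions of the non-holomorphic variables $s_1,\dots,s_3,s_5,\dots,s_8$. I will check this from the explicit coordinate expressions of the ${\sf R}_i$, as in the construction of $\rT$ and the Lie algebra of Theorem~\ref{thm:lie-algebra}. Hence $f$ depends only on $s_0$ and $s_4$.
2. ${\sf R}_2$ is the weight (Euler) vector field of the algebraic group acting on the enhanced basis, with $s_0$ and $s_4$ of some fixed weights. The eigenvalue condition makes $f$ quasi-homogeneous of weight $\chi$. After factoring out $s_0^{2\chi}$, what remains is a function of the modular parameter $z=s_4^{10}/s_0^{10}$ times a fixed power of $s_0^{-5}$.
3. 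I then impose the standard growth conditions of~\ref{amb-ansatz}:
   \begin{itemize}
   \item regularity at the large complex structure point $s_4=0$;
   \item a pole at the conifold locus $s_0^{10}=s_4^{10}$ of order at most $\chi$ in the appropriate local coordinate, which in the square-root variable means $(s_0^{10}-s_4^{10})^{-\chi/2}$;
   \item bounded growth at the orbifold point $s_0=0$, from the known regularity of the amplitudes at the Gepner point.
   \end{itemize}
4. With these, $f$ becomes a finite sum of monomials $s_4^{5k}(s_0^{10}-s_4^{10})^{(\chi-k)/2}s_0^{-5\chi}$. The parity condition $k\equiv\chi$ (mod $2$) comes from the $\Z/2$ symmetry exchanging the two conics, equivalently from requiring integrality of the power of the discriminant up to the overall $\sqrt{\cdot}$ factor. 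The bound $k\le 3\chi$ comes from the growth at $s_0=0$.

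Equation~\eqref{eq:amb-ansatz} records exactly this count, so the first assertion reduces to bookkeeping of weights once the weights of $s_0$, $s_4$ and the ${\sf R}_2$ action are read off from the definition of $\rT$.

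\textbf{Fixing the $b_k$.}

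1. For each $\chi\le 4$, solve the recursion of Theorem~\ref{alg-HAE-GMCD-Int} explicitly. Take the lower $G^{\chi'}$ as polynomials in $s_1,\dots,s_8$ with coefficients in $\mathbb Q[s_0,s_4,(s_0^{10}-s_4^{10})^{-1/2},s_0^{-1}]$, and integrate along the ${\sf R}_4$ and ${\sf R}_7$ directions. This gives $G^\chi$ up to $s_0^{2\chi}h^\chi$.
2. Expand in the holomorphic limits at the large complex structure and conifold points. Both limits are obtained as solutions of the Ramanujan-type vector field ${\sf R}_0$, using the genus-zero data of~\cite{espreafico_gauss-manin_2026}.
3. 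Impose three families of linear conditions on the $b_k$:
   \begin{itemize}
   \item the gap condition at the conifold, meaning that in the flat coordinate $t_c$ the expansion of $G^\chi$ has only the singular term $\propto t_c^{-\chi}$ followed by regular terms, with no intermediate poles;
   \item for even $\chi$, the constant map contribution at large volume;
   \item vanishing of the real GV invariants $n^{\mathbb R}_{d,g}$ wherever the complex invariants $n_{d,g}$ vanish by the Castelnuovo-type bounds.
   \end{itemize}
4. Count these conditions against the number of unknowns, roughly $3\chi/2+1$, and verify that the resulting linear system has full rank for $\chi\le4$.

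The main obstacle is this last step. Linear independence of the conditions is not automatic, and it has to be checked computationally. I would first count the gap conditions, which give about $\chi-1$ equations, add the constant map condition, and then choose the lowest-degree vanishing real invariants to fill the remaining rank. If the rank is deficient at $\chi=4$, I would add the conifold leading coefficient normalization as an extra condition.
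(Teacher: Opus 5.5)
\textbf{Verdict: genuine gap.} Your reduction of the ambiguity to a regular $f(s_0,s_4)$ with ${\sf R}_2f=\chi f$ is the paper's Theorem~\ref{thm:ambiguity}, and your procedure for fixing the $b_k$ is the paper's algorithm. The gap is in the step from $f$ to the ansatz~\eqref{eq:amb-ansatz}: the conditions you state do not produce it.

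\textbf{The range $k\le 3\chi$.} Put $v=(s_4/s_0)^5$, so each term is $b_k\,v^k(1-v^2)^{(\chi-k)/2}$.
\begin{itemize}
\item The conifold coordinate satisfies $t_c=x+O(x^2)$ with $x=v^{-2}-1$. It is therefore linear in $s_0^{10}-s_4^{10}$, so a pole of order $\chi$ there is $(s_0^{10}-s_4^{10})^{-\chi}$, not $(s_0^{10}-s_4^{10})^{-\chi/2}$. The paper's $\chi=1$ computation confirms this: the $k=3$ term $b_3s_4^{15}/(s_0^3(s_0^{10}-s_4^{10}))$ gives $-ib_3/t_c$ after multiplying by $-5i$.
\item It is this conifold bound that gives $k\le3\chi$. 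The orbifold cannot, because as $s_0\to0$ every term behaves like $v^\chi\propto z^{\chi/2}$, independently of $k$.
\item The orbifold assumption in the paper is a pole of order at most $\chi/2$ in $z$, not regularity; the paper notes real amplitudes are not regular at $z=\infty$. It only excludes terms growing faster than $v^\chi$.
\item With your conditions you would get $k\le2\chi$. You would lose exactly the coefficients with $2\chi<k\le3\chi$ on which the gap condition acts.
\item The same misconception shows in your coefficient ring, which should be $\mathcal O_{\rT}$: no square root of the discriminant occurs, while $s_5^{-1}$ does.
\end{itemize}

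\textbf{Parity and the exponents $5k$.} A priori $h$ is a function of $s_4/s_0$, a coordinate on the tenfold cover ${\sf L}_\delta$, not of $z$. Regularity on $\rT$ does not force $k\equiv\chi \pmod 2$: $(s_4/s_0)^5$ is regular for every $\chi$. So ``integrality of the power of the discriminant'' is circular, since it presupposes the ansatz. The paper gets both facts from the BPS expansion~\eqref{eq:BPS-real}. There $s_4\sim q^{1/10}$, but only powers $q^{d/2}$ with $d\equiv\chi$ may occur. Lemma~\ref{lem:amb-parity} shows the non-ambiguous part already has total $(s_4,s_7,s_8)$-degree $\equiv\chi$. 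A conic-exchange symmetry could replace this only if you also assumed, and justified, that the full amplitude is equivariant under it.

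\textbf{Counting conditions.} The gap condition~\eqref{eq:gap} prescribes the leading coefficient $\Phi_\chi=(-5)^{-\chi/2}(c_\chi+r_\chi)$. This vanishes for odd $\chi$, which is what forces $b_3=0$ at $\chi=1$. So the gap condition supplies $\chi$ conditions, not $\chi-1$, and the leading coefficient is not an optional fallback for $\chi=4$.
\begin{itemize}
\item With only $\chi-1$ gap conditions you are already short at $\chi=2$: $4$ unknowns against $1+1+1$ conditions. You are also short at $\chi=3,4$.
\item With all $\chi$ of them, Table~\ref{tab:count-quintic} matches the $\lfloor 3\chi/2\rfloor+1$ unknowns exactly for $\chi=2,3,4$. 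The inputs are the gap condition, the constant-map term, and $n^{\mathbb C}_{g,d}=0$ at $d=2$, at $d=1,3$, and at $d=2,4$ respectively.
\item For $\chi=1$ the system is overdetermined, and $n^{\mathbb R}_{2,3}=0$ serves as a check.
\end{itemize}
Independence of the conditions is then confirmed computationally, as you anticipated.
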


\subsection{Outline}

The paper is organized in four different parts. Section~\ref{sec:Basics} recalls the basic framework, introducing the Picard-Fuchs operator, the mirror map, the holomorphic limit expansions around the special points and the GMCD (Gauss-Manin connection in Disguise) framework from~\cite{espreafico_gauss-manin_2026}. We also explain how all the expansions can be obtained from the modular vector field $\sf R_0$.

Section~\ref{sec: Alim-Lange} recalls the basic special geometry framework that is essential to defining the higher genus amplitudes and the anomaly equation. We recall the extended anomaly equation as in~\cite{walcher_evidence_2009}. After that, we also explain how to write the anomaly equation in terms of the propagators and terminators as done in~\cite{alim_polynomial_2007}.

Section~\ref{sec: algebraic Hae} is where we formulate the algebraic framework for the extended anomaly equation. We explain the relationship between the generators from special geometry (from~\cite{alim_polynomial_2007}) and the coordinates on ${\rT}$. After that we define the vector fields $\sf R_i$, show that they form a Lie Algebra and prove Theorem~\ref{alg-HAE-GMCD-Int}. 

Section~\ref{sec:ambiguity} explains how to fix the ambiguity and the algorithm to compute the real Gopakumar-Vafa invariants, which proves Theorem~\ref{thm:ambiguity-intro}. We also include explicit calculations for $\chi=1$.

In the last Section~\ref{sec:conclusions}, we quickly discuss the extension for the bicubic and other models. We present the algebraic version of the extended anomaly equation for the bicubic and use it to fix the ambiguity in that case. We finish
by discussing further research directions.

There is also an Appendix in the end, where the reader can find tables of real GV invariants up to degree 30 for the quintic and bicubic.
\bigskip

\textbf{Acknowledgments.} Most of this work was developed during my postdoctoral position at IMJ-PRG, Sorbonne Université, in Paris, funded under the ERC project ROGW 864919. The final bit of writing was done after I moved to TU Darmstadt, funded by a DFG Walter Benjamin position ES 835/1-1.

I want to thank Penka Georgieva for the many discussions regarding this topic and all the support during my time in Paris. I also need to thank Johannes Walcher and Hossein Movasati for the hospitality during my visits and helpful discussions.

\textbf{LLM use and Data availability.} The computations were done in Mathematica and the codes as well as further numbers are available in the corresponding section of the author's website {\tt espreafico-guelerman.github.io}. A first code was written by the author to compute genus 2 invariants for the quintic. LLMs were used to correct small errors in the original code and to generalize it for higher genera. In particular, the code for computing the invariants for the bicubic was generated by LLMs from the original code for the quintic. The latex codes for the tables of numbers and long numerical expressions were also LLM-generated. The author takes full responsability for the text and mistakes on the paper.

\section{The Mirror Quintic, Picard-Fuchs equation and genus-zero expansions}\label{sec:Basics}

\subsection{The Mirror Quintic and Picard-Fuchs equation at \texorpdfstring{$z=0$}{z=0}}\label{pic-fuchs}
We start by recalling the basic setup of Mirror Symmetry for the real quintic threefold. Let $X$ be a quintic threefold in $\mathbb P^4$ defined over the real numbers. There is a natural involution on $X$ given by complex conjugation, whose fixed locus is given by the set of real points of $X$, denoted $X_{\mathbb R}$. $X_{\mathbb R}\subset X$ is a Lagrangian submanifold. One can define the moduli space of stable maps $M_{g,0}(X, X_{\mathbb R}, d)$ of stable maps of genus $g$, degree $d$ and with boundary on $X_{\mathbb R}$. Equivalently, one can also consider the space of stable maps $M_{g,0}^{\tau}(X,d,\sigma)$ of stable maps of genus $g$ that preserve antiholomorphic involutions $\tau$ on $C$ and $\sigma$ on $X$. The real Lagrangian is identified with the fixed locus of $\sigma$ and the boundary of the curve with the fixed locus of $\tau$. In this context, one can compute real Gromov-Witten invariants of any genus and degree by integrating over the virtual fundamental class of the moduli spaces. 

The mirror of a quintic threefold is the mirror quintic family, given by the resolution of singularities of
\begin{equation}\label{eq:mirror-quintic}
    Y_\psi:\quad \{x_0^5+x_1^5+x_2^5+x_3^5+x_4^5-5\psi\,x_0x_1x_2x_3x_4=0\}/\mathbb{Z}_5^3
\end{equation}
where $\mathbb Z_5$ acts naturally by roots of unity. We denote the elements of such family as $Y_{\psi}$. The B-model is mainly interested by the periods computed from the third (algebraic) de Rham cohomology of $H^3(Y_{\psi})$. 
Those periods satisfy the Picard-Fuchs differential equation
\begin{equation}\label{eq:PF}
  \mathcal D=\theta^4-5^5 z\left(\theta+\tfrac15\right)\left(\theta+\tfrac25\right)
  \left(\theta+\tfrac35\right)\left(\theta+\tfrac45\right).
\end{equation}
where $z:=(5\psi)^{-5},\, \theta:=z\frac{d}{dz}=-\tfrac15\,\psi\frac{d}{d\psi}$. 
Note that, in some references, the letter $z$ is used for ${\left(\psi\right)}^{-5}$ instead. The equation can be solved around $z=0$ by the Frobenius method giving us hypergeometric solutions as follows:
\begin{equation}\label{eq:phij}
  \varphi_j(z)=\frac{1}{(2\pi i)^j}\,\frac{1}{j!}\,
  \frac{\partial^{\,j}F(z,s)}{\partial s^{\,j}}\bigg|_{s=0},
  \qquad j=0,1,2,3 .
\end{equation}
where 
\begin{equation}\label{eq:frobenius}
  F(z,s)=\sum_{n\ge 0}\frac{\Gamma\big(5(n+s)+1\big)}{\Gamma(n+s+1)^5}\,
  \frac{\Gamma(1+s)^5}{\Gamma(1+5s)}\;z^{\,n+s},
\end{equation}

The holomorphic solution is $\varphi_0$ and is given by:
\begin{equation}\label{eq:varpi0}
  \varpi_0(z)=\varphi_0(z)=\sum_{n\ge0}\frac{(5n)!}{(n!)^5}z^n
  =1+120\,z+113400\,z^2+168168000\,z^3+305540235000\,z^4+\dots
\end{equation}
The mirror map, given by \(q = e^{2\pi i t}\), where \(t=\frac{\varphi_1}{\varphi_0}\) relates the variable \(z\) and the variable \(q\), which encodes the quantum cohomology of the A-model. We have
\begin{subequations}\label{eq:qz-both}
\begin{equation}\label{eq:qz}
  q(z)=z+770\,z^2+1014275\,z^3+1703916750\,z^4+3286569025625\,z^5+O(z^6),
\end{equation}
\begin{equation}\label{eq:zq}
  z(q)=q-770\,q^2+171525\,q^3-81623000\,q^4-35423171250\,q^5+O(q^6).
\end{equation}
\end{subequations}

The classical prediction from mirror symmetry, dating back to~\cite{candelas_pair_1991} is that the genus zero Gromov-Witten invariants of the quintic threefold can be computed from such periods in the following way: 
\begin{equation}\label{eq:genus0}
  \frac52\left(\frac{\varphi_1\varphi_2}{\varphi_0^{\,2}}-\frac{\varphi_3}{\varphi_0}\right)
  =\frac{1}{(2\pi i)^3}\left[\frac{5}{6}\big(\log q\big)^3+\sum_{d\ge1}N_{0,d}\,q^d\right].
\end{equation}

Such predictions were later confirmed by Givental by using torus localization. We usually call these expansions around $z=0$ \textit{large complex structure limit}, since the monodromy there is maximally unipotent. 

For real Gromov-Witten invariants, the B-model picture consists of the mirror quintic family and a family of pairs of holomorphic curves:
\begin{equation}\label{eq:deligne-conics-intro}
C_{\pm}:=\{x_0+x_1=x_2+x_3=x_4^2\pm\sqrt{5\psi}x_1x_3=0\}\subset Y_{\psi}
\end{equation}

The pair of curves induces a cycle in the relative homology \(H_{3}(Y_{\psi},C_{\pm})\) connecting the two curves. That cycle gives origin to a new period $\tau$, that satisfies an extended version of the Picard-Fuchs equation
\begin{equation}\label{eq:ext-PF}
(2\theta-1)\mathcal D\tau=0
\end{equation}
which translates to an inhomogeneous equation $\mathcal D\tau = c\sqrt z$, for a constant $c$.

The extra solution of the extended equation allows us to compute the generating function of real Gromov-Witten invariants in genus zero~\cite{pandharipande_disk_2008}. If we take the solution $\tau(z)$ satisfying  
\begin{equation}\label{eq:ext-PF-source}
\mathcal D(\tau(z))=\frac{15\sqrt{z}}{8}
\end{equation}
and expand $\frac{\tau}{\varphi_0}$ in the coordinate $q$, we get
\begin{equation}\label{eq:disk-GW}
\frac{\tau}{\varphi_0} =2\sum_{d\, {\rm odd}}N^{\mathbb R}_{0,d} q^{d/2}= 2\left(15 \sqrt{q} + \frac{2300 q^{3/2}}{3} + \frac{2720628 q^{5/2}}{5} + \frac{23911921125 q^{7/2}}{49} + \dots\right)
\end{equation}

Notice that real Gromov-Witten invariants are always half of the disk count that was computed in~\cite{walcher_opening_2007} and~\cite{pandharipande_disk_2008}. This is simply because each real curve can be given as the boundary of two different disks.

\subsection{The conifold expansion}\label{sec: conif}

One can also think about the solution of the Picard-Fuchs around the conifold point $z=5^{-5}$. We start with a basis of series solutions in terms of $x=\psi^5-1=\frac{1}{5^5z}-1$ for the homogeneous equation and a solution $\tau_c$ of the inhomogeneous part.
\begin{align*}
\mathrm{w}_1(x)
&= 1 + \frac{2}{625}x^3 + O(x^4),
\\[4pt]
\mathrm{w}_2(x)
&= x - \frac{3}{10}x^2 + \frac{11}{75}x^3 + O(x^4),
\\[4pt]
\mathrm{w}_3(x)
&= x^2 - \frac{23}{30}x^3 + O(x^4),
\\[4pt]
\mathrm{w}_4(x) &= \log(x)\,\mathrm{w}_2(x)
+ \frac{9}{20}x^2 - \frac{169}{450}x^3 + O(x^4),\\
\tau_c(x)&=\frac{x^3}{160\sqrt5}\Big(1-\frac{13}{10}x+\frac{5421}{4000}x^2-\frac{3969373}{3000000}x^3+O(x^4)\Big).
\end{align*}

At the conifold, one can choose a flat coordinate that is invariant under monodromy around $z=5^{-5}$ and vanish at the conifold point. This implies that the numerator should include a vanishing period. The simplest possibility is:

\begin{equation}\label{eq:tc}
t_c(x)=\frac{\mathrm w_2}{\mathrm w_1} = x-\frac{3}{10}x^2+\frac{11}{75}x^3-\frac{9}{100}x^4 + O(x^5)
\end{equation}

which can be inverted as
\begin{equation}\label{mirror-coni}
 x(t_c) = t_c
+\frac{3}{10}t_c^2
+\frac{1}{30}t_c^3
+\frac{1}{200}t_c^4
+O(t_c^5)   
\end{equation}

It is important to notice that other flat coordinates can be considered and that the solutions $\mathrm w_i$ are not analytical continuations of the solutions around $z=0$. This differs from the situation around $z=0$ where there is one canonical choice of $q$. We follow~\cite{huang_topological_2009} and pick the same solutions as them for the ${\rm w}_i$. Here, ${\rm w}_2$ is the vanishing period and ${\rm w}_3$ is the solution with largest local exponent. $\rm w_1$ and $\rm w_4$ could have been chosen differently. That would change the expansions, but the gap condition (see equation~\eqref{eq:gap-intro} below and Section~\ref{sec:ambiguity}) would be preserved.

The extra period was taken from~\cite{walcher_extended_2009}, but it's the only solution of the inhomogeneous equation without $1,x,x^2$. 

The conifold expansion is going to be important when considering the higher genus amplitudes, as those satisfy a gap property, i.e., the higher genus amplitudes, can be written as:
\begin{equation}\label{eq:gap-intro}
\mathcal G_c^{\chi} = c_\chi t_c^{-\chi} + O(1)
\end{equation}

where $c_\chi=0$ if $\chi$ is odd. For details see Section~\ref{sec:ambiguity}.

\subsection{Orbifold expansion}

The orbifold expansion is around $z=\infty$ or $\psi=0$. Similarly to the conifold, we can choose a basis and obtain a coordinate $s$ around the orbifold given by the quotient of two solutions. Here, we also don't have a unique choice, since it depends on a choice of fifth root of the variable $u=\psi^5$.

We have:

\begin{align*}
    y_1(u) &= 5\sqrt{5}\,u^{1/5} + \frac{\sqrt{5}}{24}\,u^{6/5}
  + \frac{3}{56\sqrt{5}}\,u^{11/5} + \frac{14641}{611520\sqrt{5}}\,u^{16/5}
  + O\!\left(u^{21/5}\right)\\
    y_2(u) &= 5\sqrt{5}\,u^{2/5} + \frac{2\sqrt{5}}{9}\,u^{7/5}
  + \frac{2401}{7128\sqrt{5}}\,u^{12/5} + \frac{343}{2145\sqrt{5}}\,u^{17/5}
  + O\!\left(u^{22/5}\right)\\
    y_3(u) &= 5\sqrt{5}\,u^{3/5} + \frac{27\sqrt{5}}{56}\,u^{8/5}
  + \frac{64}{77\sqrt{5}}\,u^{13/5} + \frac{57122}{137445\sqrt{5}}\,u^{18/5}
  + O\!\left(u^{23/5}\right)\\
    y_4(u) &= 5\sqrt{5}\,u^{4/5} + \frac{16\sqrt{5}}{21}\,u^{9/5}
  + \frac{1458}{1001\sqrt{5}}\,u^{14/5} + \frac{9261}{12155\sqrt{5}}\,u^{19/5}
  + O\!\left(u^{24/5}\right)\\
\end{align*}

\text{and the extra period for the inhomogeneous equation:}

\begin{equation}\label{eq:tau-orb}
\tau_{orb}=-\frac{50\sqrt{5}}{3}\,u^{1/2}
-\frac{31250\sqrt{5}}{27027}\,u^{3/2}
-\frac{19531250\sqrt{5}}{52055003}\,u^{5/2}
+O\!\left(u^{7/2}\right)
\end{equation}

Again, we use the flat coordinate 

\begin{equation}\label{eq:s-orb}
s(u) =\frac{y_2(u)}{y_1(u)} = 
 u^{1/5}+\frac{13}{360}\,u^{6/5}+\frac{110069}{9979200}\,u^{11/5}+\frac{52200923}{9906624000}\,u^{16/5}+O\!\left(u^{21/5}\right),
\end{equation}

and obtain its inverse

\begin{equation}\label{eq:u-orb}
u(s)=s^{5}-\frac{13}{72}\,s^{10}-\frac{1193}{399168}\,s^{15}+O\!\left(s^{20}\right).
\end{equation}

By using the inverse, we can expand our objects around $\psi=0$ and study its properties. 

We are going to see in the next section, that the data from the Picard-Fuchs equation can be encoded in the Gauss-Manin connection of the family of mirror quintics. The construction also takes the variation of Hodge structure into account and allows us to study special geometry in an algebraic framework.

\subsection{Enhanced Mirror Quintics}\label{sec-gmcd}

Having recalled the basic genus zero setup and the mirror map, let us recall the basic framework of Gauss-Manin connection in Disguise which will be used in this paper. Recall that, although this paper is focused on the mirror quintic, the ideas can by applied more generally. 

Let $X\to S$ be a family of projective varieties and $Y\subset X$ be a closed subvariety. We can consider the relative algebraic de Rham cohomology ${\rm H}^n(X_s,Y_s)$, where $n$ is the dimension of $X$. Those are fibers of the cohomology bundle over $S$. Such cohomology has a natural mixed Hodge structure coming from the long exact sequence of the pair. When $Y_s$ is empty, this is just the pure Hodge structure of $X_s$. We want to consider a space parametrizing $S$ and basis of ${\rm H}^n(X_s,Y_s)$, i.e., triples $(X_s,Y_s,\alpha_s)$, that respect the mixed Hodge structure and that have constant intersection product. Recall that we have a natural pairing on the middle cohomology given by integration. We proceed to explain how it goes for the mirror quintic. The reader should keep in mind that $X\to S$ is going the mirror quintic family and the subvariety $Y$ above is going to be a pair of curves $C_{\pm}$. We call the triples $(X_s,Y_s,\alpha_s)$ that we parametrize {\it relatively enhanced mirror quintics}.

Our goal here is to recall the definition of an enhanced mirror quintic and its real counterpart (relatively enhanced mirror quintics). We also describe the moduli spaces of (relatively) enhanced mirror quintics, but leave the proofs and technical details out. The definitions here were first made in~\cite{movasati_modular-type_2015} and~\cite{espreafico_gauss-manin_2026}, where the reader can also find more details.

 \begin{definition}\label{def:mirror-quintic}
     A mirror quintic is a resolution of singularities of an element of the Dwork family:
     
\begin{equation}\label{eq:dwork}
Y_{\psi}:x_0^5+x_1^5+x_2^5+x_3^5+x_4^5-5\psi x_0x_1x_2x_3x_4=0
\end{equation}

    in the quotient of $\mathbb P^4$ by the action the group $\mathbb Z_5^3$ given by:
    \begin{align*}
e_1 [x_0, x_1, x_2, x_3, x_4] &= [\zeta x_0, x_1, x_2, x_3, \zeta^{-1} x_4] \\
e_2 [x_0, x_1, x_2, x_3, x_4] &= [x_0, \zeta x_1, x_2, x_3, \zeta^{-1} x_4] \\
e_3 [x_0, x_1, x_2, x_3, x_4] &= [x_0, x_1, \zeta x_2, x_3, \zeta^{-1} x_4].
\end{align*}
where $e_i$'s are the generators of the group and $\zeta^5=1$.
 \end{definition}

Mirror quintics have the Calabi-Yau properties and they have Hodge diamond mirror to the quintic threefold. We won't need the constructions explicitly, as we will only use properties of the variation of Hodge structure of this family. For details, see~\cite{greene_duality_1990} or~\cite[Section 2.2]{cox_mirror_1999}.

In order to obtain Eisenstein type functions, we define enhanced mirror quintics, by considering pair of mirror quintics and a basis of the third de Rham cohomology that is compatible with the Hodge structure.

In order to obtain information about real enumerative geometry of the quintic, we need to consider a pair of conics in each mirror quintic, known as Deligne conics. They are defined by the equations
\begin{equation}
C_{\pm}=\left\{ x_{0}+x_{1}=0, x_{2}+x_{3}=0, {x_4}^{2} \pm \sqrt{5\psi}x_{1} x_{3}=0\right\}.
\end{equation}

We denote by $\langle\cdot,\cdot\rangle$ intersection form normalized as in~\cite{movasati_modular-type_2015,alim_gaussmanin_2016}:
\begin{equation}\label{eq:pairing}
\langle\omega_1,\omega_2\rangle:=\frac{1}{(2\pi i)^3}\int_{Y_\psi}\omega_1\wedge\omega_2,\qquad \omega_1,\omega_2\in H^3_{dR}(Y_\psi),
\end{equation}
extended to $H^3_{dR}(Y_\psi,C_\pm)$ through the natural map $H^3_{dR}(Y_\psi,C_\pm)\to H^3_{dR}(Y_\psi)$; in particular the class $\alpha_0\in W_2$ of Definition~\ref{relat-enha-mq} pairs trivially with every class, which is the first row and column of $\Phi$.

Those conics are roughly corresponding to the real Lagrangian via mirror symmetry and therefore we have to consider the relative cohomology $H^3(Y_{\psi},C_+\cup C_-)=:H^3(Y_{\psi},C_\pm)$. We arrive at the definition of relatively enhanced mirror quintics, which are pairs of mirror quintics and a basis of $H^3(Y_{\psi},C_\pm)$ compatible with the natural \textit{mixed} Hodge structure that exists on the relative cohomology. We arrive at the following:

\begin{definition}[cf.~\cite{espreafico_gauss-manin_2026}]%
\label{relat-enha-mq}
    A relatively enhanced mirror quintic is a triple \[(Y_\psi,[\alpha_0,\dots,\alpha_4]),\] where $Y_\psi$ is a mirror quintic, and $[\alpha_0,\dots,\alpha_4]$ is a basis of $H^3_{dR}(Y_{\psi},C_\pm)$ satisfying the following properties:
\begin{multicols}{2}
\begin{enumerate}
\item $\alpha_i\in F^{4-i}\setminus F^{5-i},\quad i >0$;

\item $[\langle\alpha_i,\alpha_j\rangle] =\Phi$;

\item $\alpha_0\in F^1\setminus F^2$;

\item $\alpha_0\in W_2$;

\item $\int_{\delta_0} \alpha_0 = 1$;

\item $\alpha_i\in W_3\setminus W_2,\quad i>0$.
\end{enumerate}
\end{multicols}
where $\Phi=\left[\begin{array}{c
cccc}
0 & 0 & 0 &0 &0\\
0&0 & 0 & 0 & 1 \\
0&0 & 0 & 1 & 0 \\
0&0 & -1 & 0 & 0 \\
0&-1 & 0 & 0 & 0
\end{array}\right]$.

\end{definition}

\begin{remark}\label{rem:enhanced-Mov}
    To obtain the definition of enhanced mirror quintic from~\cite{movasati_modular-type_2015}, it suffices to disregard properties 3 to 6, remove $\alpha_0$ (since $\dim H^3(Y_\psi)=4$) and remove the first row and column of the matrix $\Phi$.
\end{remark}

With the definitions in hand, we can consider the space $\rT$ of all relatively enhanced mirror quintics. We also denote by $\cT$ the space of enhanced mirror quintics, i.e., enhanced only with a basis of the absolute cohomology (the one considered in~\cite{movasati_modular-type_2015} and~\cite{alim_gaussmanin_2016}). 

The space $\rT$ has a natural action of the algebraic group
\begin{equation}
{\sf H}:=
\left\{\left(\begin{array}{ccccc}
1 & 0 &0 &h_1 &h_2\\
0 &g_{11} & g_{12} & g_{13} & g_{14} \\
0 & 0 & g_{22} & g_{23} & g_{24} \\
0 & 0 & 0 & g_{33}& g_{34} \\
0 &0 & 0 & 0 & g_{44}\\
\end{array}\right), \quad h_k,g_{ij}\in\mathbb C\quad \begin{array}{c}
g_{11}g_{44}=1,\\
g_{22}g_{33} = 1,\\
g_{12}g_{44} + g_{22}g_{34} = 0,\\
g_{13}g_{44}+g_{23}g_{34}-g_{24}g_{33} = 0.
\end{array}\right\}
\end{equation}

$\sf H$ consists of the changes of basis that preserve properties in~\ref{relat-enha-mq} and it acts naturally on an element of $\rT$ by changing the basis $\alpha$. Notice that it preserve the mixed Hodge structure and the intersection product. For $\cT$, we can define a group $\sf G$ acting in the same way by removing the first row and first column. A very important object is going to be the lie algebra of the group $\sf H$, given by:

\begin{equation}\label{eq:def-h}
\mathfrak h:=
\left\{\left(\begin{array}{ccccc}
0 & 0 &0 &h_1 &h_2\\
0 &g_{11} & g_{12} & g_{13} & g_{14} \\
0 & 0 & g_{22} & g_{23} & g_{24} \\
0 & 0 & 0 & g_{33}& g_{34} \\
0 &0 & 0 & 0 & g_{44}\\
\end{array}\right), \quad h_k,g_{ij}\in\mathbb C\quad \begin{array}{c}
g_{11}+g_{44}=0,\\
g_{22}+g_{33} = 0,\\
g_{12}+g_{34} = 0,\\
g_{13}-g_{24} = 0.
\end{array}\right\}
\end{equation}
Again, the version for $\cT$ and $\sf G$ is given by removing the first row and column. 

The last ingredient that will be important to us is the Gauss-Manin connection. It simply computes the derivations of the forms in the de Rham cohomology with respect to the parameters on the base of the family. It is defined purely algebraically (see~\cite{katz_differentiation_1968} for the definition for absolute cohomology and~\cite{espreafico_modularity_2024} for the relative version). It is a map:

\begin{equation}\label{eq:GM-connection}
\nabla:{\rm H}^3(Y,C_{\pm}/\rT)\to{\rm H}^3(Y,C_{\pm}/\rT)\otimes\Omega^1_{\rT}
\end{equation}

where ${\rm H}^3(Y,C_{\pm}/\rT)$ denotes the algebraic relative de Rham cohomology with respect to the family $\rT$. Given a vector field in $\rT$, one can define a matrix with entries in $\mathcal O_{\rT}$, denoted by $\nabla_R=A_R$. It is simply given by writing the connection in the basis $\alpha$ which is encoded in each point of $\rT$. Notice that this is well defined in the whole space $\rT$, since the basis is part of the data for each point of $\rT$.

The Gauss-Manin connection encodes the differential data that is encoded in the classic Picard-Fuchs equation for periods. It gives rise to a special vector field, that encodes the Yukawa coupling and the disk potential. Such vector field provides a derivation on $\mathcal O_{\rT}$ and allows us to consider $\mathcal O_{\rT}$ as a generalization of the algebra of modular forms. The moduli space and the vector field were computed explicitly in the previous work~\cite{espreafico_gauss-manin_2026} (and~\cite{movasati_modular-type_2015} for $\cT$).

\begin{proposition}\label{prop:R0} (cf.~\cite{espreafico_gauss-manin_2026}) $\rT$ is a 9-dimensional quasi-affine variety given by
\begin{equation}\label{eq: def-rT}
    \rT=\mathbb C\left[s_0,\dots s_8,\frac1{s_0s_4s_5(s_0^{10}-s_4^{10})}\right]
\end{equation}
and there is a unique vector field ${\sf R}_0$, and unique regular functions $\sf Y$ and $\sf F$ on $\rT$ such that 

\begin{equation}\label{eq:GM-R0}
\nabla_{{\sf R}_0} = \begin{bmatrix}0 &0&0&0&0\\
0&0&1&0&0\\
F&0&0&Y&0
\\
0&0&0&0&-1
\\
0&0&0&0&0\end{bmatrix}
\end{equation}

\begin{equation}\label{eq:R0}
{\sf R}_0:\quad\left\{\begin{aligned}
\dot s_0 &= \frac{6\cdot5^{4}s_0^{10}+s_0^{2}s_3-5^{4}s_4^{10}}{2s_0s_5}\\
\dot s_1 &= \frac{-5^{8}s_0^{12}+5^{5}s_0^{8}s_1+5^{8}s_0^{2}s_4^{10}+s_1s_3}{s_5}\\
\dot s_2 &= \frac{-3\cdot5^{9}s_0^{14}-5^{4}s_0^{10}s_1+2\cdot5^{5}s_0^{8}s_2
   +3\cdot5^{9}s_0^{4}s_4^{10}+5^{4}s_1s_4^{10}+2s_2s_3}{s_5}\\
\dot s_3 &= \frac{-5^{10}s_0^{16}-5^{4}s_0^{10}s_2+3\cdot5^{5}s_0^{8}s_3
   +5^{10}s_0^{6}s_4^{10}+5^{4}s_2s_4^{10}+3s_3^{2}}{s_5}\\
\dot s_4 &= \frac{s_4\left(5^{5}s_0^{8}+s_3\right)}{2s_5}\\
\dot s_5 &= \frac{3\cdot5^{5}s_0^{8}s_5+2s_3s_5-5^{4}\left(s_0^{10}-s_4^{10}\right)s_6}{s_5}\\
\dot s_6 &= \frac{-5^{5}s_0^{6}s_5-2s_2s_5+3\cdot5^{5}s_0^{8}s_6+3s_3s_6}{s_5}\\
\dot s_7 &= -s_8\\
\dot s_8 &= \frac{3\cdot5^{21/2}s_4^{5}\left(s_0^{10}-s_4^{10}\right)}{8s_0^{5}s_5}
\end{aligned}\right.
\end{equation}

\begin{equation}\label{eq:YF}
{\sf Y}=\frac{5^{8}\left(s_0^{10}-s_4^{10}\right)^{2}}{s_5^{3}},\quad {\sf F}=-{\sf Y}s_7
\end{equation}

\end{proposition}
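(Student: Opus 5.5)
This is Proposition~\ref{prop:R0}, quoted from~\cite{espreafico_gauss-manin_2026}. My plan is to reconstruct it in three stages: build coordinates on $\rT$ from an explicit reference basis, then solve for ${\sf R}_0$ as a linear problem in the Lie algebra $\mathfrak h$ of \eqref{eq:def-h}, then read off ${\sf Y}$ and ${\sf F}$.

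\textbf{Coordinates on $\rT$.} First I fix a reference basis of $H^3_{dR}(Y_\psi,C_\pm)$ over the parameter $\psi$. For the absolute part I take $\eta_i=\nabla_{\partial_\psi}^{i-1}\eta_1$, $i=1,\dots,4$, where $\eta_1$ is the residue of the holomorphic $3$-form of the Dwork family. These classes are adapted to the Hodge filtration by Griffiths transversality. For the extra class I take $\eta_0$, coming from $H^2(C_+\cup C_-)$, normalized by $\int_{\delta_0}\eta_0=1$; it lies in $W_2$ and, after correction, in $F^1$. On this basis I compute the Gauss--Manin connection: for the absolute part the companion matrix comes from the Picard--Fuchs operator \eqref{eq:PF}, and for the relative part the inhomogeneous term $\mathcal D\tau=c\sqrt z$ of \eqref{eq:ext-PF} contributes the only nonzero entry in the $\eta_0$ column. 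I also compute the intersection matrix of the $\eta_i$; it is known up to a constant, using Griffiths transversality and the Yukawa coupling $\propto 1/(\psi^5-1)$.

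A point of $\rT$ is then $\alpha=S\eta$ with $S$ upper triangular of the block shape of ${\sf H}$, subject to $S\,\Psi(\psi)\,S^{t}=\Phi$, where $\Psi$ is the intersection matrix of the $\eta_i$. Solving these quadratic relations leaves $9$ free parameters. Up to the parametrization $\psi=s_4/s_0$ (after suitable roots), these are $s_0,\dots,s_8$. The excluded locus $s_0s_4s_5(s_0^{10}-s_4^{10})=0$ is exactly where $\psi=0$, $\psi^5=1$, or $S$ is degenerate. This gives the quasi-affine description \eqref{eq: def-rT}, since ${\sf H}$ acts freely and transitively on the fibres over the $\psi$-line.

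\textbf{Existence and uniqueness of ${\sf R}_0$.} Under a change of basis, the connection matrix transforms as
\[
A_{{\sf R}} = \dot S\,S^{-1} + S\,A^{\eta}_{{\sf R}}\,S^{-1}, \qquad A^{\eta}_{{\sf R}} = \dot\psi\,A_{\psi},
\]
where $A_\psi$ is the connection matrix in the reference basis. A vector field ${\sf R}$ on $\rT$ is the same as a choice of $\dot\psi$ together with $\dot S$ tangent to the constraint variety, that is, $\dot S S^{-1}\in\mathfrak h$ after accounting for the derivative of $\Psi$. Imposing that $A_{{\sf R}}$ has the shape \eqref{eq:GM-R0} therefore becomes a linear system. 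I prescribe the zero entries and the entries $1$ and $-1$, and leave the $(2,0)$ and $(2,3)$ entries free as unknowns ${\sf F}$ and ${\sf Y}$.

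I then count dimensions: $\dim\mathfrak h = 1+\dim\mathfrak h$ entries are matched. More precisely, the affine space of admissible matrices has exactly the dimension of $\mathfrak h$ plus one. The entries of $\mathfrak h$ are the $g_{ij}$ and $h_k$, and they are in bijection with the prescribed positions together with the $\dot\psi$ direction. The prescribed $1$ in position $(1,2)$ fixes $\dot\psi$. So the system has a unique solution, and it is regular wherever $\dot\psi$ is invertible, which holds off the discriminant.

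Solving the system explicitly gives the formulas \eqref{eq:R0}. The $(2,3)$ entry of $S A^{\eta}S^{-1}$ becomes the Yukawa coupling rescaled by the leading entries of $S$, which gives ${\sf Y}=5^8(s_0^{10}-s_4^{10})^2/s_5^3$. The $(2,0)$ entry combines the inhomogeneous source of $\tau$ with the $h_k$ coordinates; identifying $s_7$ as the coordinate dual to $h$ shows ${\sf F}=-{\sf Y}s_7$.

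\textbf{Main obstacle.} The delicate step is the relative part. One has to show that the class $\eta_0$ and the $(2,0)$ entry of $A^{\eta}$ really encode $\mathcal D\tau=c\sqrt z$ in algebraic terms. This is the source of the square roots $\sqrt{5\psi}$ and of the factor $5^{21/2}$ in $\dot s_8$. For this step I would use the relative Gauss--Manin computation of~\cite{espreafico_modularity_2024}. The remaining computation is linear algebra, which I would check symbolically.
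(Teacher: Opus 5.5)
Your outline is the Movasati-style computation that the paper imports from \cite{espreafico_gauss-manin_2026}; the paper gives no separate proof here. You trivialize $\rT$ by an algebraic reference basis, transform the connection matrix, and solve a linear system that is uniquely solvable because the entry $1$ at $(1,2)$ fixes $\dot\psi$ and the zeros in the positions of $\mathfrak h^{\rm tr}$ fix the vertical component, so the approach is correct.

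Two details are off, though neither affects the argument. First, ${\sf F}=-{\sf Y}s_7$ involves no source term of $\tau$: $\nabla\alpha_2$ lies in the span of $\eta_1,\eta_2,\eta_3$, so its $\alpha_0$-coefficient comes only from rewriting $\eta_3$ through $\alpha_3$, whose $\alpha_0$-component is $s_7$; the source enters only through the vanishing of the $(4,0)$ entry, i.e.\ in $\dot s_8$. Second, the base coordinate satisfies $\psi^5=(s_0/s_4)^{10}$, not $\psi=s_4/s_0$, so $s_4=0$ is the large complex structure point $z=0$ (which your list of excluded loci omits), while $s_0=0$ is $\psi=0$.
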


\begin{proposition}\label{prop:expansions}
    Writing ${\sf R}_0$ as a differential equation in $s_i(q)$ with differential operator $5q\frac{d}{dq}$ and solving with the initial conditions 
\begin{equation}\label{eq:init-LCS}
    s_0 = \frac1{\sqrt{5}} + 12\sqrt{5}q + O(q^2)\quad    s_4 = q^{1/10}+O(q^{11/10})
\end{equation}
 gives us the expansions around $z=0$. Note that we need to allow powers $q^{1/2}$ for $s_7$ and $s_8$ and $q^{1/10}$ for $s_4$. Writing $z=\frac{s_4^{10}}{5^5s_0^{10}}$ and expanding gives us the expression of $z$ in terms of $q$ via mirror map (this is how we can deduce the first coefficients in the first place).

    On the other hand, using the initial conditions 
\begin{equation}\label{eq:init-coni}
s_0 = \frac1{\sqrt 5}+\frac1{625\sqrt{5}}t_c^3 + O(t_c^4)\qquad s_5 = t_c - \frac{3}{10}t_c^2 + O(t_c^3)
\end{equation}
 and solving for $5\frac{d}{dt_c}$ we get the expansions around $z=5^{-5}$.  In particular, writing $x=\frac{1}{5^5z}-1=\frac{s_0^{10}}{s_4^{10}}-1$ gives the expression~\eqref{mirror-coni}.
\end{proposition}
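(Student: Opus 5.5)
The claim to prove is Proposition~\ref{prop:expansions}: solving ${\sf R}_0$ as an ODE system with the given initial data reproduces the known expansions at $z=0$ and at the conifold. I would not try to solve the nine-dimensional nonlinear system directly. Instead I would relate it to the period data of Section~\ref{sec:Basics}, using the Gauss--Manin description in Proposition~\ref{prop:R0}.

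\textbf{Step 1: a period section.} Along a curve in $\rT$, let $\alpha$ be the enhanced basis and $P=[\int_{\delta_j}\alpha_i]$ the period matrix with respect to a flat basis $\delta_0,\dots,\delta_4$ of relative homology. Here $\delta_0$ is the cycle joining $C_+$ and $C_-$. Horizontality of periods gives ${\sf R}_0 P = A_{{\sf R}_0}P$, with $A_{{\sf R}_0}$ the matrix in \eqref{eq:GM-R0}. Conversely, fix $\alpha_1=\Omega/\varphi_0$, where $\Omega$ is the standard holomorphic form. Complete it to a basis by successive derivatives with respect to the flat coordinate, normalized via $\Phi$ and the Yukawa coupling, exactly as in \cite{movasati_modular-type_2015}. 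Take $\alpha_0$ to be the class with $\int_{\delta_0}\alpha_0=1$, as in \cite{espreafico_gauss-manin_2026}. This produces a holomorphic map $q\mapsto \rT$.

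\textbf{Step 2: it is an integral curve of ${\sf R}_0$.} By construction, the connection matrix of this section along $5q\,d/dq$ has the shape \eqref{eq:GM-R0}. Uniqueness of ${\sf R}_0$ in Proposition~\ref{prop:R0} then says the curve is an integral curve of ${\sf R}_0$. Its coordinates $s_i(q)$ are explicit period expressions:
\begin{itemize}
\item $s_0,s_4$ are determined by $\varphi_0$ and $z$, using the relation $z=s_4^{10}/(5^5s_0^{10})$, which is read off from the formulas;
\item $s_5$ comes from the Yukawa coupling;
\item $s_7,s_8$ come from $\tau/\varphi_0$ and its derivative.
\end{itemize}
Expanding these at $q=0$ with \eqref{eq:varpi0} and \eqref{eq:zq} gives \eqref{eq:init-LCS}.

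\textbf{Step 3: uniqueness of the formal solution.} I would show that these initial conditions determine the formal solution in $\mathbb C[[q^{1/10}]]$. Substitute an ansatz and compare coefficients. At each order, the operator $5q\,d/dq$ multiplies the unknown coefficient by a nonzero rational number, except at finitely many resonant orders, and at those the prescribed data fix the coefficient. This is the main obstacle. The vector field has the denominators $s_5$ and $s_0^5$, and $s_5$ behaves like the inverse cube root of the Yukawa coupling, so the linearization at $q=0$ must be checked to be nondegenerate. I would first verify this numerically to low order in Mathematica, then argue that the indicial roots are $0$ and the exponents forced by $q^{1/10}$ and $q^{1/2}$. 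Given uniqueness, the formal solution is the period curve of Step 2, and substituting it into $z=s_4^{10}/(5^5s_0^{10})$ reproduces the mirror map \eqref{eq:zq}.

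\textbf{Step 4: the conifold.} I would repeat the argument with the basis ${\rm w}_i$, $\tau_c$ and the flat coordinate $t_c$ of \eqref{eq:tc}, and the derivation $5\,d/dt_c$. Step 2 must hold literally: the conifold section, built from ${\rm w}_2/{\rm w}_1$ with the analogous normalization, has to have connection matrix exactly \eqref{eq:GM-R0} along $5\,d/dt_c$. The uniqueness argument of Step 3 then applies with the conditions \eqref{eq:init-coni}. Reading off $x=s_0^{10}/s_4^{10}-1$ recovers \eqref{mirror-coni}. The resonance check is again the delicate point: $s_5$ vanishes at $t_c=0$, so the system is singular there. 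That is exactly why two initial coefficients must be prescribed, and I would confirm that no further free parameters appear.
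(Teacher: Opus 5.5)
Your Steps~1--2 are the paper's proof. The section you build is the normal-form locus ${\sf L}_\delta$ of Proposition~\ref{tau}, where $\alpha_1=\Omega/\int_{\delta_2}\Omega$ and $\alpha_2=\partial_{\tau_0}\alpha_1$. Proposition~\ref{prop:tau-relations} gets ${\sf R}_0|_{{\sf L}_\delta}=d/d\tau_0$ by the same uniqueness argument you use. Corollary~\ref{cor:tau0-mirror-map} then simply declares the stated initial conditions equivalent to the choice of $\delta$; it never proves they determine the solution.

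Your Step~3 is therefore the only new ingredient, and as stated it is false. At $z=0$ the data \eqref{eq:init-LCS} do not determine a solution in $\mathbb C[[q^{1/10}]]$. The coordinate $s_7$ occurs in no component of ${\sf R}_0$, and ${\sf R}_7=\partial_{s_7}$ commutes with ${\sf R}_0$. So $s_7\mapsto s_7+c$ gives a one-parameter family of formal solutions with the same $s_0,s_4$. You must impose $s_7(0)=0$, which holds on the period curve because $\tau/\varphi_0=O(q^{1/2})$.

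With that fix the argument closes, and more easily than you expect. The limit point at $q=0$ must be a zero of ${\sf R}_0$, which extends regularly to $s_4=0$. This forces $s_8=0$, $s_3=-6\cdot5^4s_0^8$, $s_1=-5^4s_0^4$, $s_2=-7\cdot5^4s_0^6$ and $s_6=3s_5/s_0^2$. The exponent $\tfrac1{10}$ of $s_4$ then forces $s_5(0)=5^5s_0^8+s_3=-1$; this is not prescribed data. The linearization there is far from nondegenerate: its spectrum is $\{0,\tfrac12\}$, with $\tfrac12$ simple and carried by $\partial_{s_4}$. Hence $(\tfrac k2-J)$ is invertible for $k\ge2$, and the only resonances are at $q^0$ and $q^{1/10}$. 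There is no resonance at $q^{1/2}$: the half-integer powers in $s_7,s_8$ come from the source term $s_4^5$ in $\dot s_8$. The coefficient $12\sqrt5$ is then forced rather than prescribed.

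At the conifold, your expectation that no further free parameters appear fails. Since $[{\sf R}_0,{\sf R}_8]={\sf R}_7$, the maps $(s_7,s_8)\mapsto(s_7+c_1-\tfrac{c_2}{5}t_c,\ s_8+c_2)$ send solutions of the system with $5\,d/dt_c$ to solutions. They are analytic in $t_c$ and leave $s_0$ and $s_5$ untouched. The root $s_4(0)$ of $s_4^{10}=s_0^{10}$ is not specified either. So \eqref{eq:init-coni} cannot single out the solution. You would have to import the normalization of $\tau_c$ (no $1,x,x^2$ terms) as extra conditions on $s_7,s_8$.

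You would also still owe a genuinely singular analysis. At $t_c=0$ the coordinate $s_5$ vanishes and ${\sf R}_0$ has poles along $s_5=0$, unlike at $z=0$, so the Briot--Bouquet argument does not transfer. The cleaner route is the paper's: define the expansions as the restrictions $s_i|_{{\sf L}_\delta}$ for the chosen $\delta$, read off from $\varphi_i,\tau$ (resp.\ ${\rm w}_i,\tau_c$). Then no uniqueness statement is needed.
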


We delay the proof of Proposition~\ref{prop:expansions} until Corollary~\ref{cor:tau0-mirror-map}. 

\begin{remark}\label{rem:orbifold}
    We can also do the same for the orbifold point, but it will not be necessary in our computations.
\end{remark}

\begin{remark}\label{rem:integral-curve}
    We will see in Section~\ref{sec: periods}, that the integral curve of ${\sf R}_0$ is a one dimensional space that correspond to a period domain through the period map.
\end{remark}

\section[Extended Holomorphic Anomaly Equation and Alim-Länge Polynomiality]{Extended Holomorphic Anomaly Equation and\\ Alim-Länge Polynomiality}\label{sec: Alim-Lange}

In this section we quickly recall the special geometry formalism linked to the moduli space of mirror quintics, the extended holomorphic anomaly equations proposed by Walcher~\cite{walcher_evidence_2009} which is satisfied by the generating functions $\mathcal G^\chi$ (indexed by Euler characteristic) and how the solutions of such equation can be expressed in terms of special sections of vector bundles as done in~\cite{alim_polynomial_2007}.

    Let $\mathcal M$ be the complex structure moduli. In the case of the mirror quintic this is simply $\mathbb P^1-\{0,5^{-5},\infty\}$. In general, we have more dimensions, but here we focus on the one dimensional case with a variable $z$. The geometry of $\mathcal M$ is described by looking at the third cohomology bundle $\mathcal H^3(Y_z,\mathbb C)$, with the natural intersection form. Such bundle can be decomposed as
    
\begin{equation}\label{eq:hodge-decomp}
\mathcal L\oplus(\mathcal L\otimes T\mathcal M)\oplus\overline{\mathcal L\oplus(\mathcal L\otimes T\mathcal M)},
\end{equation}

    where $\mathcal L$ is generated by the holomorphic three form $\Omega$ and $T\mathcal M$ is the tangent bundle. 

    There is a natural metric defined from the Kähler potential $K:=-\log\langle\Omega,\bar\Omega\rangle$ as $G_{z\bar z}:=\partial_{\bar z}\partial_zK$. Such metric gives rise to the Levi-Civita connection with Christoffel symbol $\Gamma^{z}_{z z}=G_{z\bar z}^{-1}\partial_zG_{z\bar z}$.

    The fundamental object is the Yukawa coupling $C_{zzz}:=-\langle\Omega,\partial_z^3\Omega\rangle$, which is a section of $\Gamma(\mathcal L^{2}\otimes{\rm Sym}^3T^*\mathcal M)$, that gives the following relation
\begin{equation}\label{eq:special-geometry}
     \partial_{\bar z}\Gamma_{zz}^z  = 2G_{z\bar z} -C_{zzz}\overline{C_{zzz}} e^{2K} G_{z\bar z}^{-2}
\end{equation}
We fix the normalization where $C_{zzz}= \frac{5}{z^3(1-5^5z)}$ following~\cite{alim_polynomial_2007}. This is not the same normalization that appears in~\cite{espreafico_gauss-manin_2026,movasati_modular-type_2015}. This is addressed in~\ref{sec:dict}, when we compare both frameworks. The factors of five appearing there are caused by this difference in normalization.

   There is a covariant derivative defined on the bundles $\mathcal L^n\otimes {\rm Sym}^m T\mathcal M$ from the Levi Civita connection and the derivative of $K$, denoted by $D_z$. 
   
\begin{equation}\label{eq:Dz}
D_z=\partial_z +m\Gamma_{zz}^z+n \partial_zK
\end{equation}

In the case of the real/open theory, one has to consider an extra section, denoted by $\Delta_{zz}\in\Gamma({\rm Sym}^2T^*\mathcal M\otimes\mathcal L)$, related to the extra cohomology class that appears in relative cohomology $H^3(Y,C_{\pm})$ as in the introduction. It is given by $\langle\Omega,\partial_z^2\nu\rangle$, where $\nu$ is lift of the normal function associated to integration over the extra relative cycle $C_{+}$ connecting $C_{-}$. The function $\Delta_{zz}$ satisfies the equation 
\begin{equation}\label{eq:Deltazz-difeq}
    \partial_{\bar{z}} \Delta_{zz} = C_{zzz} e^{K} G_{z\bar{z}}^{-1} \overline{\Delta_{zz}},
\end{equation}
and it can be determined from the solution to the nonhomogeneous Picard-Fuchs equation $\tau$ and the covariant derivative.
\begin{equation}\label{eq:deltazz-def}
    \Delta_{zz} = i\left(D_z D_z \tau - \frac{e^{K} C_{zzz}}{G_{z\bar{z}}} \overline{D}_{\bar{z}} \overline{\tau}\right) ,
\end{equation}

The topological string amplitudes are also sections of powers of $\mathcal L$. They correspond, through mirror symmetry, to the generating functions of Gromov-Witten invariants. In our case, of the quintic threefold. The functions $\mathcal F^{g}\in \Gamma(\mathcal L^{2-2g})$ are the closed string amplitudes and correspond to complex Gromov-Witten invariants of genus $g$ (i.e., virtual counts of $J$-holomorphic curves of genus $g$).  The string amplitudes $\mathcal G^{\chi}\in\Gamma(\mathcal L^{-\chi})$ corresponding to a sum of real Gromov-Witten invariants and complex Gromov-Witten invariants of fixed Euler characteristic (i.e., virtual counts of $J-$holomorphic curves of Euler characteristic $-\chi$ with an involution compatible with complex conjugation).  Notice that the source of these $J$-holomorphic curves can be orientable or not. $\mathcal G^{\chi}$ is related to other amplitudes in the following way:

\begin{equation}\label{eq:G-decomp}
\mathcal G^{\chi} =\frac1{2^{\chi/2+1}}\left(\mathcal F^{g_{\chi}} + \sum_{2g-2+h=\chi}(\mathcal K^{g,h} +\mathcal F^{g,h}) + \sum_{2g-1+h=\chi}\mathcal R^{g,h}\right)
\end{equation}

where $g_\chi=\frac{\chi}2+1$: this term appears only for even $\chi$. Here $\mathcal F$ correspond to orientable surfaces, $\mathcal K$, to nonorientable surfaces with an even number of crosscaps and $\mathcal R$ to nonorientable surfaces with an odd number of crosscaps. The real Gromov-Witten invariants $N^{\mathbb R}_{g,d}$ are encoded in the expansion of the open part (including the non orientable) for $g=\chi+1$. In other words, $\mathcal G^{\chi}$ encodes the complex invariants of $g_\chi=\chi/2+1$ and real invariants of $g = \chi+1$. We won't get into details regarding $\mathcal{F,K,R}$, just consider the full generating function $\mathcal G$. 

Throughout this paper, we adopt the change of coordinates ${\mathcal G}^{\chi} \to 2^{\chi/2}\mathcal G^{\chi}$ to simplify computations. This will slightly change the anomaly equations, eliminating the traditional factor of $\frac12$ that appears in most papers and in the complex case. In terms of $\mathcal F$, $\mathcal K$ and $\mathcal R$, we have:
\begin{equation}\label{eq:G-decomp-norm}
\mathcal G^{\chi} =\frac12\left(\mathcal F^{g_{\chi}} + \sum_{2g-2+h=\chi}(\mathcal K^{g,h} +\mathcal F^{g,h}) + \sum_{2g-1+h=\chi}\mathcal R^{g,h}\right)
\end{equation}
The correspondence between these amplitudes and Gromov-Witten theories appears once we consider the holomorphic $q-$expansions of these functions in the large complex structure limit, i.e., around $z=0$. 

\begin{equation}\label{eq:hol-limit}
\mathcal F^g_A=\lim_{\bar z\to0}w_0^{2g-2}\mathcal F^{g}\qquad \mathcal G^\chi_A =\lim_{\bar z\to0}w_0^\chi \mathcal G^{\chi}
\end{equation}

Here, $w_0$ is the holomorphic solution to the Picard-Fuchs equation for periods of the 3-form $\Omega$. It correspond to the holomorphic limit of the Kähler parameter $e^{-K}$. Multiplying by the period transforms sections into regular functions on $\mathcal M$ that can be expanded in the coordinate $q$ (see section~\ref{pic-fuchs}).

The coefficients of $q^d$, according to Mirror Symmetry, correspond to the Gromov-Witten invariants of degree $d$ (for the quintic, this has been proved following works of Guo and collaborators see~\cite{ guo_structure_2018,chang_polynomial_2021,chang_bcov_2026, chang_conifold_2026}). Such generating functions have a so called BPS expansion, which accounts for multiple covers that appear when computing virtual counts of stable maps.

\begin{equation}\label{eq:BPS-complex}
\sum_{g}\mathcal F^g_A\lambda^{2g-2} = \sum_{d,g} N_{g,d}^{\mathbb C}q^d\lambda^{2g-2} = \sum_{g,k,d} n_{g,d}^{\mathbb C}\frac1k\left(2\sin\left(\frac{k\lambda}{2}\right)\right)^{2g-2}q^{kd}
\end{equation}

\begin{equation}\label{eq:BPS-real}
\sum_{\chi}\left(i\lambda\right)^\chi\left(\mathcal G^{\chi}_A-\frac12\mathcal F^{g_\chi}_A\right) = 
\sum_{\substack{\chi \equiv d \bmod 2 \\ k\ \mathrm{odd}}}
n_{\chi+1,d}^{\mathbb R}
\frac{1}{k}
\left(2\sinh\frac{\lambda k}{2}\right)^\chi
q^{kd/2}
\end{equation}
\begin{remark}\label{rem:GV-integrality}
    The invariants $n_{g,d}$ are known as Gopakumar-Vafa (GV) invariants and they have the remarkable property of being integers. For the complex invariants, this was proved in the $A$-model, i.e., using properties of Gromov-Witten invariants in symplectic geometry by Ionel and Parker~\cite{ionel_gopakumar-vafa_2018}. For the real invariants, there have been great advances made by Georgieva and Ionel in proving integrality~\cite{georgieva_splitting_2021,georgieva_klein_2021}. 

    Integrality provides the main check for our computations. Since the expression above does not give integers for any set of $N_{g,d}$, obtaining integer GV invariants is strong evidence that the invariants computed are correct.
\end{remark}
\begin{remark}\label{rem:real-genus}
    Note that the real invariants for $\chi$ are invariants of genus $\chi+1$. This is explained by the fact that a surface with boundary (orientable or not) of Euler characteristic $-\chi$ can be seen as a quotient of a closed surface of Euler characteristic $-2\chi$ by an involution. By definition, the genus of the real invariants is the genus of the corresponding closed Riemann surface. In this case,  $2-2g=-2\chi$, which gives $g=\chi+1$.
\end{remark}

The amplitudes satisfy a recursive differential equation known as the holomorphic anomaly equation, introduced in the closed case in the BCOV papers~\cite{bershadsky_holomorphic_1993,bershadsky_kodaira-spencer_1994}. For the real case, an extension was coined by Walcher in~\cite{walcher_extended_2009} for generating functions of oriented counts $\mathcal F^{g,h}$. Later he considered a version for $\mathcal G^{\chi}$ in~\cite{walcher_evidence_2009}, which ended up giving the same equation. Fix the notation
\begin{equation}\label{eq:Cbar-Deltabar}
C_{\bar{z}}^{zz} = \overline{C_{zzz}} e^{2K} G_{z\bar{z}}^{-2}, \quad \Delta_{\bar{z}}^{z} = \overline{\Delta_{zz}} e^{K} G_{z\bar{z}}^{-1} .
\end{equation}

The holomorphic anomaly equation for the closed topological string amplitudes reads, for $g\geq2$

\begin{equation}\label{HAE-cl}
    \partial_{\bar{z}} {\mathcal F}^{g} = \frac{1}{2} \sum_{\smallmatrix g_1 + g_2 = g\\g_1,g_2>0\endsmallmatrix} C_{\bar{z}}^{zz} {\mathcal F}_1^{g_1}{\mathcal F}_1^{g_2} + \frac{1}{2} C_{\bar{z}}^{zz} {\mathcal F}_{2}^{g-1} .
\end{equation}
where the sub-indices are covariant derivatives, i.e., $\mathcal F^g_n=(D_z)^n\mathcal F^{g}$. 

The extended version has an extra term related to $\Delta_{zz}$, for $\chi\geq1$:
\begin{equation}\label{HAL-real}
    \partial_{\bar{z}} {\mathcal G}^{\chi} =  \sum_{{\smallmatrix \chi_1 + \chi_2 = \chi - 2\\ \chi_1,\chi_2\geq0\endsmallmatrix}} C_{\bar{z}}^{zz} {\mathcal G}_1^{\chi_1}{\mathcal G}_1^{\chi_2} + C_{\bar{z}}^{zz} {\mathcal G}_{2}^{\chi-2} + \Delta_{\bar{z}}^{z} {\mathcal G}_1^{\chi-1} .
\end{equation}

\begin{remark}\label{rem:normalization}
    As mentioned, the equations for $\mathcal G^{\chi}$ do not have the factor $\frac12$ because of the normalization chosen. Indeed, without the normalization, as we see in~\cite{walcher_evidence_2009}, there is a factor $\frac12$, but multiplying the equation by $2^{\chi/2}$ will cancel the factor $\frac12$. Note that our $\Delta_{zz}$ differs from Walcher's by a factor of $\sqrt2$, which is more suitable for the normalized equation and is important so the equation as above is actually satisfied. In his paper, our $\Delta_{zz}$ correspond to equation~\cite[Eq (5.7)]{walcher_evidence_2009} and not to the general $\mathcal W$ that appears on the derivation of the anomaly equation and has the factor $\sqrt2$.
\end{remark}

We also fix the first few functions $\mathcal F^0_3$ and $\mathcal F^1_1$ for the complex case and $\mathcal G^{-2}_3$, $\mathcal G^{-1}_2$ and $\mathcal G^{0}_1$. For $g=0$ or $\chi=-2$, we have the Yukawa coupling $\mathcal G^{-2}_3=\frac12C_{zzz}$. $\mathcal G^{-1}_2 =\frac12\Delta_{zz}$ and $\mathcal G^{0}_1$ and $\mathcal F^{1}_1$ satisfy:

\begin{equation}\label{eq:G01}
\partial_{\bar z}\mathcal G^{0}_1= \frac12\Delta^{z}_{\bar z}\Delta_{zz} -\frac{e(X)}{48}G_{z\bar z}+\frac12C_{zzz}C^{zz}_{\bar z}
\end{equation}

\begin{equation}\label{eq:F11}
\partial_{\bar z}\mathcal F^{1}_1= -\left(\frac{e(X)}{24}-1\right)G_{z\bar z}+\frac12C_{zzz}C^{zz}_{\bar z}
\end{equation}

where $e(X)=-200$ is the Euler characteristic of the quintic.

These equations determine the amplitudes $F^g$ and $G^{\chi}$ up to an holomorphic term, which is called holomorphic ambiguity, including for $\chi=0$. The first two ambiguities can be fixed immediately from the information about the low degree invariants, which is available for genus 1. For higher genus, we need to use different properties (see Section~\ref{sec:ambiguity}). These equations can be solved using Feynman diagrams (see~\cite{bershadsky_holomorphic_1993,bershadsky_kodaira-spencer_1994},\cite{walcher_extended_2009} and~\cite{konishi_solutions_2007}). The idea is that the solutions can be written as certain sums over graphs with vertices representing Riemann surfaces of certain genus. There are different types of edges correspond to certain objects called \textit{propagators}. In the real case, some edges do not have ends (they represent boundary components) which correspond to \textit{terminators}. These functions allow us to compute a potential for a specific graph as a product over edges and then sum over graphs account for automorphisms. We do not go into details here.

The propagators are sections defined by the equations

\begin{equation}\label{eq:propagators}
\partial_{\bar{z}} S^{zz} = C_{\bar{z}}^{zz}, \quad \partial_{\bar{z}} S^{z} = S^{zz} G_{z\bar{z}}, \quad \partial_{\bar{z}} S = S^{z} G_{z\bar{z}},
\end{equation}

The terminators are defined by the following equations

\begin{equation}\label{eq:terminators}
\partial_{\bar{z}} \Delta^{z} = -\Delta_{\bar{z}}^{z}, \quad \partial_{\bar{z}} \Delta = \Delta^{z} G_{z\bar{z}}.
\end{equation}

Notice that these definitions only define the propagators up to an holomorphic term. By using the known expansions for the $\chi\leq0$ (genus 0 and 1), all these can be fixed. See the last section of~\cite{alim_polynomial_2007}. With all these in hands, we can recall the main result of Yamaguchi and Yau~\cite{yamaguchi_topological_2004} for the closed amplitudes for the quintic, later extended by Alim and Länge~\cite{alim_polynomial_2007} that says that the nonholomorphic part of the solutions of the holomorphic anomaly equation are polynomials in $S^{zz},S^{z},S,\Delta^z,\Delta$, $\partial_zK$. This is proved by showing that the covariant derivative of the propagators can be written in terms of the propagators (plus holomorphic ambiguity) and then rewriting the holomorphic anomaly equation in terms of these generators (see equations 2.23 to 2.25 of~\cite{alim_polynomial_2007}). Here, we present the form adapted for $\mathcal G^{\chi}$ instead of the $\mathcal F^{g,h}$.

\begin{proposition}\label{al-HAE} Walcher's extended holomorphic anomaly equation can be written as 
\begin{align}
\frac{\partial \mathcal G^{\chi}}{\partial S^{zz}} &= \sum_{\substack{\chi_1+\chi_2=\chi-2\\ \chi_1,\chi_2\geq0}}  \mathcal G_1^{\chi_1} \mathcal G_1^{\chi_2} + \mathcal G_2^{\chi-2}, \\[4pt]
\frac{\partial \mathcal G^{\chi}}{\partial \Delta^{z}} &= -\mathcal G_1^{\chi-1}, \\[4pt]
0 &= \frac{\partial \mathcal G^{\chi}}{\partial K_z} + S^z \frac{\partial \mathcal G^{\chi}}{\partial S} + S^{zz} \frac{\partial \mathcal G^{\chi}}{\partial S^z} + \Delta^z \frac{\partial \mathcal G^{\chi}}{\partial \Delta} .
\end{align}
\end{proposition}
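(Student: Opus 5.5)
We prove Proposition~\ref{al-HAE} by induction on $\chi$, in the style of Yamaguchi--Yau and Alim--Länge. The base cases $\chi\le 0$ are supplied by the explicit expressions for $\mathcal G^{-2}_3=\frac12 C_{zzz}$, $\mathcal G^{-1}_2=\frac12\Delta_{zz}$ and equation~\eqref{eq:G01}. The induction hypothesis is that every $\mathcal G^{\chi'}$ with $\chi'<\chi$ is a polynomial in $S^{zz},S^z,S,\Delta^z,\Delta,K_z$ with holomorphic (rational in $z$) coefficients. The statement then follows from three ingredients:
\begin{enumerate}
\item closure of the generators under $D_z$;
\item an expression of $\partial_{\bar z}$ acting on any such polynomial via the chain rule;
\item comparison of coefficients with Walcher's equation~\eqref{HAL-real}.
\end{enumerate}

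\textbf{Closure under $D_z$.} The first step is to establish the differential ring structure. From~\eqref{eq:special-geometry}, \eqref{eq:propagators} and~\eqref{eq:terminators}, one checks that $\partial_{\bar z}$ of each of the following combinations vanishes:
\begin{itemize}
\item $\Gamma^z_{zz}$ minus a combination of $S^{zz}C_{zzz}$ and $K_z$;
\item $D_zS^{zz}$, $D_zS^z$, $D_zS$ written in terms of $S^{zz}C_{zzz}S^{zz}$, $S^z$, $K_zS^{zz}$, and so on;
\item $D_z\Delta^z$, $D_z\Delta$, which involve $\Delta_{zz}$, $S^{zz}$, $\Delta^z$;
\item $D_zK_z$, written via $K_z^2$, $S^{zz}$, $S^z$.
\end{itemize}
For $D_z\Delta^z$ this uses $\partial_{\bar z}\Delta_{zz}$ from~\eqref{eq:Deltazz-difeq} together with $\partial_{\bar z}\Delta^z=-\Delta^z_{\bar z}$. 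Hence these derivatives equal polynomials in the generators up to holomorphic functions, which are fixed once and for all following the choices of~\cite{alim_polynomial_2007}. It also follows that $\mathcal G^{\chi'}_n$ is again a polynomial in the generators whenever $\mathcal G^{\chi'}$ is.

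\textbf{Chain rule.} Assume $\mathcal G^\chi$ is such a polynomial with holomorphic coefficients. Using $\partial_{\bar z}K_z=G_{z\bar z}$ and the defining relations, the chain rule gives
\[
\partial_{\bar z}\mathcal G^\chi=C^{zz}_{\bar z}\frac{\partial\mathcal G^\chi}{\partial S^{zz}}-\Delta^z_{\bar z}\frac{\partial\mathcal G^\chi}{\partial\Delta^z}+G_{z\bar z}\Big(\frac{\partial \mathcal G^\chi}{\partial K_z}+S^{zz}\frac{\partial\mathcal G^\chi}{\partial S^z}+S^z\frac{\partial\mathcal G^\chi}{\partial S}+\Delta^z\frac{\partial\mathcal G^\chi}{\partial\Delta}\Big).
\]

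\textbf{Comparison and integration.} Next we compare this with~\eqref{HAL-real}. The key input is that $C^{zz}_{\bar z}$, $\Delta^z_{\bar z}$ and $G_{z\bar z}$ are functionally independent antiholomorphic-direction data: they cannot be written as polynomial combinations of the generators with holomorphic coefficients. Matching coefficients therefore yields the three displayed equations, including the vanishing of the $G_{z\bar z}$ coefficient.

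For the induction step itself we argue in the other direction. The right-hand sides of the first two equations are polynomials by the hypothesis together with closure under $D_z$. We show that the system is integrable, namely that the mixed partials agree, $\partial_{\Delta^z}$ of the first right-hand side equals $\partial_{S^{zz}}$ of the second, and that both are compatible with the third (linear) constraint. Granting integrability, a polynomial solution exists, unique up to a holomorphic function; this function is the ambiguity. Finally, by the chain-rule identity, this polynomial satisfies~\eqref{HAL-real}.

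\textbf{Main obstacle.} The hardest step is the compatibility check. One must verify that the operator $\partial_{K_z}+S^z\partial_S+S^{zz}\partial_{S^z}+\Delta^z\partial_\Delta$ annihilates the source terms $\sum\mathcal G^{\chi_1}_1\mathcal G^{\chi_2}_1+\mathcal G^{\chi-2}_2$ and $\mathcal G^{\chi-1}_1$. This amounts to showing that this operator commutes with $D_z$ up to terms that vanish on sections of the correct weight; that commutation is exactly where the weight $n$ in~\eqref{eq:Dz}, and the specific $K_z$-terms in the closure relations, enter. I would check this first by computing the commutator on the generators directly, and then extend to products by the Leibniz rule, with the weight bookkeeping of $\mathcal L^{-\chi}$ making the $K_z$-contributions cancel. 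The mixed-partial condition is handled similarly, using $\partial_{\Delta^z}D_z$ versus $D_z\partial_{\Delta^z}$ and the inductive equations for lower $\chi$.
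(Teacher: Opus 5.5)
Your chain-rule identity and the matching of the coefficients of $C^{zz}_{\bar z}$, $\Delta^z_{\bar z}$ and $G_{z\bar z}$ are exactly the paper's proof. The paper does nothing more: it takes polynomiality of $\mathcal G^\chi$ in $S^{zz},S^z,S,\Delta^z,\Delta,K_z$ as known from Alim--Länge, and, like you, it simply treats the three antiholomorphic quantities as independent. The trouble is the inductive existence argument you add on top, because its key step is false. A minor point first: your induction hypothesis cannot be that every $\mathcal G^{\chi'}$ with $\chi'<\chi$ is polynomial. For $\chi'\le 0$ only $\mathcal G^{-2}_3$, $\mathcal G^{-1}_2$ and $\mathcal G^0_1$ are.

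Write $L=\partial_{K_z}+S^z\partial_S+S^{zz}\partial_{S^z}+\Delta^z\partial_\Delta$. The operator $L$ does not annihilate the sources, and the weight terms do not cancel; they are exactly what survives. Since $D_z\mathcal G^{\chi'}=\partial_z\mathcal G^{\chi'}-\chi'K_z\mathcal G^{\chi'}$, and $C^{zz}_{\bar z}$, $\Delta^z_{\bar z}$, $G_{z\bar z}$ are covariantly constant, you get $\partial_{\bar z}\mathcal G^{\chi'}_1=C^{zz}_{\bar z}(\cdots)+\Delta^z_{\bar z}(\cdots)-\chi'G_{z\bar z}\mathcal G^{\chi'}$. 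Your own coefficient matching then gives $L\mathcal G^{\chi'}_1=-\chi'\mathcal G^{\chi'}$, and likewise $L\mathcal G^{\chi'}_2=-(2\chi'+2)\mathcal G^{\chi'}_1$ from the extra $-\partial_{\bar z}\Gamma^z_{zz}$. For the seed, integrating~\eqref{eq:G01} with $\Delta_{zz}=-\Delta^zC_{zzz}$ gives $\mathcal G^0_1=\tfrac14C_{zzz}(\Delta^z)^2+\tfrac12C_{zzz}S^{zz}-\tfrac{e(X)}{48}K_z+\text{hol.}$, so $L\mathcal G^0_1=-e(X)/48=25/6\neq 0$.

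Nor is $LA=LB=0$ what compatibility with the third equation means. Because $[\partial_{S^{zz}},L]=\partial_{S^z}$, $[\partial_{S^z},L]=\partial_S$ and $[\partial_{\Delta^z},L]=\partial_\Delta$, the third equation combined with the first two forces
\[
\partial_{S^z}\mathcal G^\chi=-LA,\qquad \partial_S\mathcal G^\chi=L^2A,\qquad \partial_\Delta\mathcal G^\chi=L\mathcal G^{\chi-1}_1 .
\]
Your condition would therefore make every $\mathcal G^\chi$ independent of $S^z$, $S$ and $\Delta$. The paper's explicit $G^1$ refutes this: its term $-\frac{25}{6}s_8/5^8$ equals $is_0^2\cdot\frac{25}{6}\hat\Delta$, i.e.\ $\partial_\Delta\mathcal G^1=L\mathcal G^0_1=25/6$.

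The genuine integrability conditions are different. They are $L^3A=0$, $L^2B=0$, and mixed-partial identities among the derivatives just listed. One convenient way to organize them is to pass to the shifted generators $S^z-S^{zz}K_z$, $S-S^zK_z+\tfrac12S^{zz}K_z^2$ and $\Delta-\Delta^zK_z$. In these coordinates $L=\partial_{K_z}$, and the third equation says only that there is no explicit $K_z$-dependence. So either drop the induction and cite polynomiality as the paper does, or redo the compatibility analysis with the correct conditions.
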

\begin{proof}
    The proof is basically the same as what appears in~\cite{alim_polynomial_2007}. The main difference is the factor $\frac12$ that disappears from the first equation. This happens because of the normalization we use for $\mathcal G^{\chi}$, as explained before. For completion, we reproduce their idea here.

  Since $\mathcal G^{\chi}$ is a polynomial in the propagators, terminator and $K_z$, we have:
    
\begin{equation}\label{eq:dbarG-chain}
\partial_{\bar z}\mathcal G^{\chi} = \partial_{\bar z}S^{zz}\frac{\partial \mathcal G^{\chi}}{\partial S^{zz}} + \partial_{\bar z}S^{z}\frac{\partial \mathcal G^{\chi}}{\partial S^{z}} +\partial_{\bar z}S\frac{\partial \mathcal G^{\chi}}{\partial S} +\partial_{\bar z}\Delta^{z}\frac{\partial \mathcal G^{\chi}}{\partial \Delta^z}+\partial_{\bar z}\Delta\frac{\partial \mathcal G^{\chi}}{\partial \Delta}+\partial_{\bar z}K_z\frac{\partial \mathcal G^{\chi}}{\partial K_z}
\end{equation}

    Using the relations above and the fact that $\partial_{\bar z}\partial_zK=G_{z\bar z}$, we have:
    
\begin{equation}\label{eq:dbarG-grouped}
\partial_{\bar z}\mathcal G^{\chi} = C_{\bar z}^{zz}\frac{\partial \mathcal G^{\chi}}{\partial S^{zz}} - \Delta^z_{\bar z}\frac{\partial \mathcal G^{\chi}}{\partial \Delta^z} + G_{z\bar z}\left(S^{zz}\frac{\partial \mathcal G^{\chi}}{\partial S^{z}} +S^{z}\frac{\partial \mathcal G^{\chi}}{\partial S} + \Delta^z\frac{\partial \mathcal G^{\chi}}{\partial \Delta}+\frac{\partial \mathcal G^{\chi}}{\partial K_z}\right)
\end{equation}

    We finish by writing the anomaly equation on the LHS and comparing the terms multiplying $G_{z\bar z}$, $C_{\bar z}^{zz}$ and $\Delta^{z}_{\bar z}$. 
\end{proof}

\section[The Algebraic Holomorphic Anomaly Equation and Period Manipulation]{The Algebraic Holomorphic Anomaly Equation\\ and Period Manipulation}\label{sec: algebraic Hae}

In this section, our goal is to rewrite the extended anomaly equation in terms of the vector fields ${\sf R}_i$ and the variables $s_i$ from the introduction, i.e., prove Theorem~\ref{alg-HAE-GMCD-Int}. In order to relate the parameters from the moduli space $\rT$ to the propagator and terminators, our idea is to consider a real map $\sigma:\mathcal M\to \rT$, with a choice of basis at each point. This was considered in~\cite{alim_gaussmanin_2016} for the closed case, although never written in such explicit manner as we do in~\ref{sec:dict}. We establish a \textit{dictionary} relating each generator of the Alim-Länge polynomial ring to a regular function on $\rT$.

After having established the dictionary, it is natural to think about the expansions around the special points of $\mathcal M$. In order to understand that, we use the period map and the algebraic group action described in~\ref{sec-gmcd}. We show that the expansions around the special points can be computed by solving the differential system corresponding to $\sf R_0$, as stated in Proposition~\ref{prop:expansions}. We also show that the Lie algebra $\mathfrak h$ from~\eqref{eq:def-h} gives rise to the eight vector fields $\mathbb R_i$ that are involved in our version of the holomorphic anomaly equation from Theorem~\ref{alg-HAE-GMCD-Int}. In particular, the higher genus functions $G^\chi$ are regular functions in $\mathcal O_{\rT}$.

\subsection{The dictionary}\label{sec:dict}

Let $\sigma:\mathcal M\to \rT$ be the map $z\mapsto (Y_z,C_{\pm},\{\alpha_0,\allowbreak -i5^{-3}e^{K}\Omega,\allowbreak -i5^{-2}e^KG_{z\bar z}^{-1}(\partial_z\Omega+K_z\Omega),\allowbreak -i5^{2}(\partial_{\bar z}\bar\Omega+K_{\bar z}\bar\Omega),\allowbreak i5^{3}\bar \Omega\}$), where $\alpha_0$ is the cocycle associated to the normal function $\int_\delta$, as we describe in section~\ref{sec-gmcd}.

To see that this map is well defined, first notice that the forms respect the Hodge filtration. $\Omega$ is a (3,0) form, $D_z\Omega$ is a sum of a (3,0) form with a (2,1) form etc. To see the intersection product does not depend on $z$ and is equal to the matrix $\Phi$ (see~\ref{relat-enha-mq}), we have:

\begin{equation}\label{eq:sigma-pair1}
\langle -i5^{-3}e^{K}\Omega,i5^{3}\bar\Omega\rangle = e^K\langle\Omega,\bar\Omega\rangle = e^Ke^{-K} = 1
\end{equation}

\begin{equation}\label{eq:sigma-pair2a}
\langle -i5^{-2}e^KG_{z\bar z}^{-1}(\partial_z\Omega+K_z\Omega),-i5^{2}(\partial_{\bar z}\bar\Omega+K_{\bar z}\bar\Omega)\rangle = -e^KG_{z\bar z}^{-1}\langle\partial_z\Omega +K_z\Omega,\partial_{\bar z}\bar\Omega +K_{\bar z}{\bar\Omega}\rangle =
\end{equation}

\begin{equation}\label{eq:sigma-pair2b}
=-e^KG_{z\bar z}^{-1}\left(\partial_z\partial_{\bar z}\langle\Omega,\bar\Omega\rangle+K_z\partial_{\bar z}\langle\Omega,\bar\Omega\rangle + K_{\bar z}\partial_z\langle\Omega,\bar\Omega\rangle +K_zK_{\bar z}\langle\Omega,\bar\Omega\rangle\right)=
\end{equation}

\begin{equation}\label{eq:sigma-pair2c}
=-e^KG_{z\bar z}^{-1}\left(-G_{z\bar z}e^{-K} + K_zK_{\bar z}e^{-K}-K_{z}K_{\bar z}e^{-K} - K_zK_{\bar z}e^{-K}+ K_zK_{\bar z}e^{-K} \right)=1
\end{equation}

\begin{equation}\label{eq:sigma-pair3}
\begin{aligned}
\langle -i5^{-3}e^{K}\Omega,-i5^{2}(\partial_{\bar z}\bar\Omega+K_{\bar z}\bar\Omega)\rangle&= -\tfrac{1}{5}e^K\left(\partial_{\bar z}\langle\Omega,\bar\Omega\rangle+K_{\bar z}\langle\Omega,\bar\Omega\rangle\right)\\
&=-\tfrac{1}{5}e^K\left(-K_{\bar z}e^{-K} + K_{\bar z}e^{-K}\right) = 0
\end{aligned}
\end{equation}

\begin{equation}\label{eq:sigma-pair4}
\begin{aligned}
\langle -i5^{-2}e^{K}G_{z\bar z}^{-1}(\partial_z\Omega+K_z\Omega),i5^{3}\bar\Omega\rangle&= 5e^KG_{z\bar z}^{-1}\left(\partial_z\langle\Omega,\bar\Omega\rangle+K_{z}\langle\Omega,\bar\Omega\rangle\right)\\
&=5e^KG_{z\bar z}^{-1}\left(-K_{z}e^{-K} + K_ze^{-K}\right) = 0
\end{aligned}
\end{equation}

The map $\sigma$ allows us to create a relationship between $\mathcal M$ and $\rT$ and create a dictionary assigning regular functions in $\rT$ to the propagators, $K_z$ and $G_{z\bar z}$. This is possible by considering functions $\beta\in \mathcal O_{\rT}$ such that $\beta\circ\sigma$ coincides with the specified quantity. Notice that the section $\sigma$ gives a trivialization of the bundle $\mathcal L$ and $\mathcal L\otimes T\mathcal M$ in the first two coordinates. This allow to write the propagators and terminators as functions of $z$.

We first relate the derivative $\partial_z$ with the vector field $\sf R_0$.

\begin{lemma}\label{lem:Gzz and R0}We have the equalities:
    \begin{equation}
        {\rm d}\sigma(\partial_z)= \frac15\hat G_{z\bar z}{\sf R_0}
    \end{equation}
\begin{equation}
    {\sf Y\circ\sigma} = -5^{-3}G_{z\bar z}^{-3}e^{2K}C_{zzz}\qquad {\sf F\circ\sigma} = -i 5^3e^KG_{z\bar z}^{-2}\Delta_{zz} 
\end{equation}
    In particular, we have that $R_0(f)\circ\sigma = 5G_{z\bar z}^{-1}\partial_z(f\circ\sigma)$ for any regular function $f$ in $\rT$.
\end{lemma}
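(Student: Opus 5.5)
We will compute the Gauss--Manin connection along the image of $\sigma$ and compare it with the normal form \eqref{eq:GM-R0} of $\nabla_{{\sf R}_0}$. The key fact is the uniqueness in Proposition~\ref{prop:R0}: ${\sf R}_0$ is the only vector field whose connection matrix has the shape \eqref{eq:GM-R0}. So it suffices to show that $\nabla_{\partial_z}$, written in the basis $\alpha=\sigma(z)$, equals $\frac15 G_{z\bar z}$ times a matrix of that shape. Here $\hat G_{z\bar z}$ is understood as the regular function on $\rT$ pulling back to $G_{z\bar z}$, or simply as the scalar $G_{z\bar z}$ along $\sigma$. Since $\sigma$ is only a real map, we would first note that $\partial_z$ acts on $\alpha_1,\dots,\alpha_4$ as sections over the real-analytic family. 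The Gauss--Manin connection is $\mathbb C$-linear and flat, so $\nabla_{\partial_z}$ applied to $\bar\Omega$ and $\partial_{\bar z}\bar\Omega$ is $\partial_z$ of their periods, and these vanish because the periods are antiholomorphic.

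We then differentiate each basis element in turn, writing $\alpha_1=-i5^{-3}e^K\Omega$, $\alpha_2$, $\alpha_3$, $\alpha_4$ as in the definition of $\sigma$.

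First, $\partial_z\alpha_1 = -i5^{-3}e^K(\partial_z\Omega+K_z\Omega) = \frac15 G_{z\bar z}\,\alpha_2$, which is the $(1,2)$ entry $1$ times $\frac15G_{z\bar z}$.

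Second, for $\alpha_2$ we use special geometry: $D_z(\partial_z\Omega+K_z\Omega)$ lies in $F^1$. Pairing it with the basis and using $\langle\Omega,\partial_z^3\Omega\rangle=-C_{zzz}$ together with the definitions of $G_{z\bar z}$ and $\Gamma$, the derivative $\partial_z\alpha_2$ reduces to a multiple of $\alpha_3$ plus, in the relative cohomology, a multiple of $\alpha_0$. The terms in $\alpha_1,\alpha_2$ cancel thanks to the extra factors $e^KG^{-1}_{z\bar z}$ and the Christoffel symbol. The coefficient of $\alpha_3$, after normalizing by $\frac15G_{z\bar z}$, gives ${\sf Y}\circ\sigma=-5^{-3}G_{z\bar z}^{-3}e^{2K}C_{zzz}$. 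The $\alpha_0$ component comes from the relative period: pairing with the normal function $\nu$ yields $\Delta_{zz}$ via \eqref{eq:deltazz-def}, producing ${\sf F}\circ\sigma=-i5^3e^KG_{z\bar z}^{-2}\Delta_{zz}$.

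Third, $\partial_z\alpha_3=-i5^2\partial_z(K_{\bar z})\bar\Omega=-i5^2G_{z\bar z}\bar\Omega=-\frac15G_{z\bar z}\alpha_4$, giving the entry $-1$. Finally, $\partial_z\alpha_4=0$ and $\partial_z\alpha_0=0$, since $\alpha_0$ is the flat class dual to $\delta_0$.

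Having matched all entries, uniqueness gives ${\rm d}\sigma(\partial_z)=\frac15G_{z\bar z}{\sf R}_0$, along with the two identities for $\sf Y$ and $\sf F$. The chain rule then yields ${\sf R}_0(f)\circ\sigma=5G_{z\bar z}^{-1}\partial_z(f\circ\sigma)$.

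The main obstacle is the second step. It requires care with the powers of $5$, the sign conventions for $C_{zzz}$ and the pairing, and above all the $\alpha_0$-component in relative cohomology. That component must be identified precisely with $\Delta_{zz}$ including the factor $i$ and the $\sqrt2$ normalization of Remark~\ref{rem:normalization}. I would fix it by pairing the differentiated class against the relative cycle $\delta$ and comparing with \eqref{eq:deltazz-def}, using \eqref{eq:Deltazz-difeq} to check consistency of the antiholomorphic correction term.
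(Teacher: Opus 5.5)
Your proposal is correct and follows essentially the same route as the paper: differentiate each basis vector along $\sigma$, show that $\partial_z\alpha_2=-i5^{-2}e^KG_{z\bar z}^{-1}D_z^2\Omega$ has only $\alpha_3$ and $\alpha_0$ components, read off ${\sf Y}$ from $\langle\partial_z\alpha_2,\alpha_2\rangle$, obtain ${\sf F}$ by integrating over $\delta_0$ (using $\int_{\delta_0}\Omega=i5^4\tau$) and comparing with \eqref{eq:deltazz-def}, and conclude by the uniqueness in Proposition~\ref{prop:R0}. The only difference is that the paper kills the $\alpha_1,\alpha_2,\alpha_4$ coefficients by the Leibniz rule for the constant pairing rather than by tracking the Christoffel terms, and your $\sqrt2$ concern is unnecessary because $\Delta_{zz}$ is defined directly by \eqref{eq:deltazz-def} in the paper's normalization.
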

\begin{proof}
    By the uniqueness statement in Proposition~\ref{prop:R0}, it suffices to show that Gauss-Manin connection applied to ${\rm d}\sigma(\partial_z)$ satisfy equation~\eqref{eq:GM-R0} after multiplying by $\frac15\hat G_{z\bar z}$.

    We start by computing $\nabla_{\partial_z}$, which we denote by $\partial_z$ at each of the $\alpha_i(\sigma(z))$ to get the matrix $\nabla_{{\rm d}\sigma(\partial_z)}$. Three computations are very simple ($\alpha_1,\alpha_3$ and $\alpha_4$)
    \[
    \partial_z(-i5^{-3}e^K\Omega) = -i5^{-3}(K_ze^K\Omega + e^K\partial_z\Omega) = \frac15G_{z\bar z} \alpha_2(\sigma)
\]
\[
    \partial_z(-i5^{2}(\partial_{\bar z}\bar\Omega + K_{\bar z}\bar\Omega ))=-i5^{2}(0 + \partial_zK_{\bar z}\bar\Omega + 0) = -i5^{2} G_{z\bar z}\bar \Omega = -\frac15 G_{z\bar z}\alpha_4(\sigma)
    \]
\[
\partial_z(i5^{3}\bar\Omega)=0
\]
Applying it to $\alpha_2$ gives us exactly the covariant derivative (we use $\partial_z G_{z\bar z}G_{z\bar z}^{-1} = \Gamma_{zz}^z$):
\[
\partial_z(-i5^{-2}G_{z\bar z}^{-1}e^K(K_z\Omega + \partial_z\Omega)) = -i5^{-2}e^KG_{z\bar z}^{-1}D_z^2\Omega
\]

This implies that $\nabla_{{\rm d}\sigma(\partial_z)}$ is of the form of Proposition~\ref{prop:R0}, since, by type, only $\alpha_3$ and $\alpha_0$ will appear on $D^2_z\Omega$. To see this, we apply the pairings with $\alpha_1,\alpha_3$ and $\alpha_4$ and observe, using the Leibniz rule
\[
\langle\partial_z\alpha_2,\alpha_i\rangle = \partial_z\langle\alpha_2,\alpha_i\rangle - \langle\alpha_2,\partial_z\alpha_i\rangle,
\]
that all these are zero except for $i=2$.

For the second part of the theorem, it suffices to actually compute $\langle\partial_z\alpha_2,\alpha_2\rangle$, which is going to give us the coefficient of $\alpha_3$ up to a sign. This implies:
\begin{align*}
\langle\partial_z\alpha_2,\alpha_2\rangle &= \langle\partial_z\alpha_2,-i5^{-2}e^KG_{z\bar z}^{-1}(\partial_z\Omega + K_z\Omega)\rangle = \langle\partial_z\alpha_2,-i5^{-2}e^KG_{z\bar z}^{-1}\partial_z\Omega\rangle\\
&= -i5^{-2}e^KG_{z\bar z}^{-1}(\partial_z\langle\alpha_2,\partial_z\Omega\rangle - \langle\alpha_2,\partial_z^2\Omega\rangle)\\
&=-5^{-4}e^{2K}G_{z\bar z}^{-2}(\partial_z(\langle\partial_z\Omega,\partial_z\Omega\rangle + K_z\langle\Omega,\partial_z\Omega\rangle) - \langle\partial_z\Omega,\partial_z^2\Omega\rangle -K_z\langle\Omega,\partial_z^2\Omega\rangle)\\
&=-5^{-4}e^{2K}G_{z\bar z}^{-2}(\partial_z(-\langle\Omega,\partial_z^2\Omega\rangle) - \langle\partial_z\Omega,\partial_z^2\Omega\rangle)\\
&=-5^{-4}e^{2K}G_{z\bar z}^{-2}(-\langle\partial_z\Omega,\partial_z^2\Omega\rangle)=-5^{-4}e^{2K}G_{z\bar z}^{-2}\langle\Omega,\partial^3_z\Omega\rangle=5^{-4}e^{2K}G_{z\bar z}^{-2}C_{zzz}
\end{align*}

Using that ${\sf R_0}=5G_{z\bar z}^{-1}{\rm d}\sigma(\partial_z)$, we conclude that ${\sf Y\circ\sigma} = -5^{-3}e^{2K}G_{z\bar z}^{-3}C_{zzz}$.

To get the coefficient of $\alpha_0$, we just use the definition on $\alpha_3$:
\[
A\alpha_0 = -i5^{-2}e^KG_{z\bar z}^{-1}D^2_z\Omega +5^{-4}e^{2K}G_{z\bar z}^{-2}C_{zzz}\left(-i5^{2}\bar D_{\bar z}\bar\Omega\right)
\]
By integrating with respect to $\delta_0$, using that $\int_{\delta_0}\Omega=i5^{4}\tau$ and comparing with~\eqref{eq:deltazz-def}, we get:
\[
A = -i\cdot 5^2e^KG_{z\bar z}^{-1}\Delta_{zz}
\]
Using ${\sf R_0} = 5G_{z\bar z}^{-1}{\rm d}\sigma(\partial_z)$ again, we conclude ${\sf F}\circ\sigma =- i\cdot 5^3e^K G_{z\bar z}^{-2}\Delta_{zz} $
\end{proof}
\begin{lemma}[Dictionary]\label{lem:dict}
    Each generator $B$ of the polynomial ring in~\cite{alim_polynomial_2007} can be associated to a regular function on $\hat B$ on $\rT$ such that $\hat B\circ\sigma=B$. This construction allows us to prove equivalence between the anomaly equation of Proposition~\ref{al-HAE} and the equation of Theorem~\ref{alg-HAE-GMCD-Int} that we are going to construct in Theorem~\ref{alg-HAE-GMCD}. See Table~\ref{tab:dictionary}.
\end{lemma}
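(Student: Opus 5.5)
The plan is to build the dictionary generator by generator. The key input is that every entry of the Gauss--Manin matrix $\nabla_{{\sf R}_0}$ and of the period matrix is expressible through the coordinates $s_0,\dots,s_8$. Pulling back along $\sigma$ then turns these entries into the special geometry quantities $G_{z\bar z}$, $K_z$, $e^K$, $C_{zzz}$, $\Delta_{zz}$ and their derivatives.

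\textbf{Step 1: the basic entries.} From Lemma~\ref{lem:Gzz and R0}, $d\sigma(\partial_z)=\tfrac15 G_{z\bar z}{\sf R}_0$, and ${\sf Y}$, ${\sf F}$ pull back to explicit combinations of $G_{z\bar z}$, $e^K$, $C_{zzz}$ and $\Delta_{zz}$.

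\textbf{Step 2: functions of $z$ and the first generators.}
\begin{itemize}
\item I will express $z$ as a function on $\rT$, namely $z=s_4^{10}/(5^5 s_0^{10})$, following Proposition~\ref{prop:expansions}.
\item Applying ${\sf R}_0$ to $z$ and using $R_0(f)\circ\sigma=5G_{z\bar z}^{-1}\partial_z(f\circ\sigma)$ gives a regular function $\hat G_{z\bar z}$ with $\hat G_{z\bar z}\circ\sigma=G_{z\bar z}$. Concretely, $\hat G_{z\bar z}=5/{\sf R}_0(z)$, a regular function since $s_5$ and $s_0^{10}-s_4^{10}$ are invertible on $\rT$.
\item Comparing ${\sf Y}$ with $C_{zzz}=5/(z^3(1-5^5z))$ then yields $\widehat{e^K}$ up to a square root. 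To avoid the ambiguity I will instead read $e^K$ off from the coordinate $s_0$ directly: $s_0$ should be the period-type entry $\langle\alpha_1,\cdot\rangle$ of the Dwork form, a known multiple of $e^K$.
\end{itemize}

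\textbf{Step 3: $K_z$ and $\Gamma$.}
\begin{itemize}
\item $K_z$ is obtained from ${\sf R}_0(s_0)$. Since $\partial_z e^{K}=K_z e^K$, this gives $\widehat{K_z}=\tfrac15\hat G_{z\bar z}{\sf R}_0(s_0)/s_0$ up to the holomorphic normalization factor.
\item $\Gamma^z_{zz}$ is obtained from ${\sf R}_0(\hat G_{z\bar z})$.
\end{itemize}

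\textbf{Step 4: propagators and terminators.} For $S^{zz}$, $S^z$, $S$, $\Delta^z$, $\Delta$, I will use the Alim--Länge closed forms rather than their $\bar\partial$-definitions:
\begin{itemize}
\item $S^{zz}=\tfrac{1}{C_{zzz}}(\partial_z\log(G_{z\bar z}^{-1}e^{-2K}\ldots)+\text{hol.})$, and similarly for the others.
\item $\Delta^z$, $\Delta$ are expressed through $\Delta_{zz}/C_{zzz}$ and $D_z$ of these, plus holomorphic terms.
\end{itemize}
Each of these is a rational expression in $G_{z\bar z}$, $K_z$, $\Gamma$, $C_{zzz}$, $\Delta_{zz}$ and holomorphic functions of $z$, so it lifts to $\mathcal O_{\rT}$ by substituting the hats found above. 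I will then verify the defining equations~\eqref{eq:propagators} and \eqref{eq:terminators} and the covariant derivative relations of~\cite{alim_polynomial_2007}. This amounts to checking that the lifts reproduce the closed ring under ${\sf R}_0$, which follows from the explicit form~\eqref{eq:R0}.

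\textbf{Main obstacle.} The holomorphic ambiguities in the propagators and terminators must be matched with the specific coordinates $s_1,s_2,s_3,s_6,s_7,s_8$. I expect the natural identification to be:
\begin{itemize}
\item $S^{zz}$ with $s_6/s_5$-type expressions;
\item $S^z$ with an $s_2$-type expression;
\item $S$ with an $s_1$-type expression;
\item $\Delta^z$ with $s_7$ and $\Delta$ with $s_8$, up to factors of $\hat G_{z\bar z}$ and $5$.
\end{itemize}
I will fix these by demanding that ${\sf R}_0$ acting on the lifts reproduces exactly the Alim--Länge differential ring. The holomorphic terms there are then forced to match the polynomial coefficients in~\eqref{eq:R0}. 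Checking these identities, including the powers of $5$ from the normalization of $C_{zzz}$, is where the computational difficulty lies.
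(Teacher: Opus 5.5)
\textbf{Verdict: genuine gap.} Your route is the paper's: lift $z=s_4^{10}/(5^5s_0^{10})$, set $\hat G_{z\bar z}=5/{\sf R}_0(z)$, get $K_z$ from ${\sf R}_0(s_0)$ and $\Gamma^z_{zz}=\tfrac15{\sf R}_0(\hat G_{z\bar z})$, then $S^{zz}$ from the integrated special geometry relation, $S^z$ and $S$ from $z$-derivatives, $\Delta^z=-\Delta_{zz}/C_{zzz}$, and $\Delta$ from $\partial_z\Delta^z$. However, the normalization everything rests on is wrong as you state it.

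$s_0$ is not a multiple of $e^{K}$. The correct relation is $(5s_0^2)\circ\sigma=e^{-K}$. This is consistent with the initial condition $s_0=\frac1{\sqrt5}+12\sqrt5\,q+\dots$, which gives $5s_0^2=1+120q+\dots$, matching $\varpi_0$, the holomorphic limit of $e^{-K}$.

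Consequently your $\hat K_z=\tfrac15\hat G_{z\bar z}{\sf R}_0(s_0)/s_0$ is not a lift of $K_z$. The correct lift is $-2$ times it, $\hat K_z=-\tfrac25\hat G_{z\bar z}{\sf R}_0(s_0)/s_0$. A constant multiplicative discrepancy cannot be absorbed into a ``holomorphic normalization'', which would only shift $K_z$ by $\partial_z\log f(z)$.

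The error propagates. $\hat K_z$ enters $C_{zzz}S^{zz}=2K_z-\Gamma^z_{zz}-1/z$, and $\widehat{e^{-K}}$ enters $\hat\Delta_{zz}=i5^{-3}\,\widehat{e^{-K}}\,\hat G_{z\bar z}^{2}{\sf F}$. So every downstream entry $\hat S^{zz},\hat S^z,\hat S,\hat\Delta^z,\hat\Delta$ would come out wrong. Relatedly, your closed form has the wrong sign of $K$: it should read $C_{zzz}S^{zz}=\partial_z\log(e^{2K}G_{z\bar z}^{-1})+\text{hol.}$. With $e^{-2K}$ one gets $\partial_{\bar z}(C_{zzz}S^{zz})=-4G_{z\bar z}+C_{zzz}C^{zz}_{\bar z}$, so $\partial_{\bar z}S^{zz}=C^{zz}_{\bar z}$ fails.

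The fix is the route you discarded. Lemma~\ref{lem:Gzz and R0} gives $e^{2K}=-5^{3}G_{z\bar z}^{3}{\sf Y}/C_{zzz}$. Substitute $\hat G_{z\bar z}$, ${\sf Y}$, and $C_{zzz}=5/(z^3(1-5^5z))$ with $z=s_4^{10}/(5^5s_0^{10})$. This yields $\widehat{e^{2K}}=5^{-2}s_0^{-4}$, whose square root $5^{-1}s_0^{-2}$ is regular on $\rT$. So there is no real square-root ambiguity: the sign is irrelevant for $K_z=\tfrac12\partial_z\log e^{2K}$ and is fixed by the holomorphic limit. This recovers the paper's $e^{-K}=5s_0^2$, which the paper takes directly from the definition of $\rT$ ($s_0^2=t_0$ normalizes $\mathcal L$).

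What you call the main obstacle is not one. The holomorphic ambiguities are Alim--Länge's fixed choices: $-1/z$ in the relation for $\Gamma^z_{zz}$, and zero in $\Delta_{zz}=-\Delta^zC_{zzz}$. These are functions of $z$ alone and lift through $\hat z$ automatically. The dependence on $s_6,s_2,s_1,s_7,s_8$ is produced by the ${\sf R}_0$-derivatives; nothing has to be matched to coordinates by hand.
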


\begin{proof}
    We first notice that $(5s_0^2)\circ\sigma = e^{-K}$. This comes from the definition of the moduli space $\rT$, where $s_0^2=t_0$ is the choice of normalization of the bundle $\mathcal L$. See~\cite{espreafico_gauss-manin_2026} and~\cite{movasati_modular-type_2015}.

    We can also compute $\hat G_{z\bar z}$ from Lemma~\ref{lem:Gzz and R0}. Indeed, since $\frac{s_4^{10}}{5^5s_0^{10}}\circ\sigma=z$, we have that:
    \[
    {\sf R_0}\left(\frac{s_4^{10}}{5^5 s_0^{10}}\right)\circ\sigma = 5 G_{z\bar z}^{-1}\partial_z(z)\implies\hat G_{z\bar z}^{-1} = \frac15\frac{s_4^{10}(s_4^{10}-s_0^{10})}{s_0^{12}s_5}.
    \]

    This implies $\hat G_{z\bar z} = \dfrac{5s_0^{12}s_5}{s_4^{10}(s_4^{10}-s_0^{10})}$.

    The next step is to compute $\hat K_z$, which can be done by applying $\sf R_0$ to $5s_0^2$. Indeed, we have $K_z = \frac{\partial_z(e^K)}{e^K} = -\frac{\frac15G_{z\bar z}{\sf R_0}(5s_0^2)}{5s_0^2}\circ\sigma$
    \[
    -\frac15 \hat G_{z\bar z}\frac{{\sf R_0}(5s_0^2)}{5s_0^2} = -\frac25\hat{G}_{z\bar z}\frac{{\sf R_0}(s_0)}{s_0}=\dfrac{s_0^{10}\bigl(5^{4}(6s_0^{10}-s_4^{10})+s_0^{2}s_3\bigr)}{s_4^{10}\,\dis}
    \]
    
    Finally, we explain how to compute the propagators and terminators. The main tool is the special geometry relation~\eqref{eq:special-geometry}, that relates the Christoffel symbols and $\partial_{\bar z}S^{zz}$. Indeed, rewrite~\eqref{eq:special-geometry} as:
    \begin{equation}\label{eq:special-geometry-szz}
        \partial_{\bar z}\Gamma^z_{zz} = 2G_{z\bar z} - C_{zzz}\partial_{\bar z}S^{zz}
    \end{equation}
    
    Using $G_{z\bar z} = \partial_{\bar z}K_z$, we conclude:
 \begin{equation}\label{eq:special-geometry-integrated}
      \Gamma_{zz}^z = 2K_z - C_{zzz}S^{zz} + {\rm amb.}
 \end{equation}
    where amb is the holomorphic ambiguity.

    Following~\cite{alim_polynomial_2007}, we choose the ambiguity to be $-1/z$. Since $\Gamma^{z}_{zz} = G_{z\bar z}^{-1}\partial_zG_{z\bar z}$, $\hat\Gamma^z_{zz}$ can be computed via Lemma~\ref{lem:Gzz and R0}. As $C_{zzz}$ has been already computed in terms of $\sf Y$, we can easily compute $S^{zz}$ from~\eqref{eq:special-geometry-integrated}. The same integration strategy can be used for the other propagators $S$ and $S^z$ using the expressions~\eqref{eq:propagators}. $S^z$ can be computed in terms of $\partial_{z}S^{zz}$ and the other generators. $S$ needs $\partial_z{S^z}$.  We, again, choose the ambiguities in the same manner as Alim-Länge. For the terminators, we use equation~\eqref{eq:Deltazz-difeq},~\eqref{eq:Cbar-Deltabar}and~\eqref{eq:terminators} to obtain $\Delta_{zz}=-\Delta^zC_{zzz}$ by choosing the ambiguity to be zero. After that, we can compute $\Delta$ in terms of $\partial_z\Delta^z$. 

    The final results are shown in Table~\ref{tab:dictionary}.
\end{proof}

\begin{lemma}\label{lem:dict-jac}
    The nine regular functions on $\rT$ appearing in Table~\ref{tab:dictionary} form a local étale coordinate system for $\rT$.
\end{lemma}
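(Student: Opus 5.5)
The statement is local: we must show that the nine functions $\hat B_1,\dots,\hat B_9$ of Table~\ref{tab:dictionary}, namely $e^{-K}\leftrightarrow 5s_0^2$, $\hat G_{z\bar z}$, $\hat K_z$, $\hat S^{zz}$, $\hat S^z$, $\hat S$, $\hat\Delta^z$, $\hat\Delta$, together with $z=\frac{s_4^{10}}{5^5s_0^{10}}$ (or an equivalent choice), give an étale map $\rT\to\mathbb A^9$. Since $\rT$ is an open subset of $\mathbb A^9$ with coordinates $s_0,\dots,s_8$ by Proposition~\ref{prop:R0}, a morphism $\rT\to\mathbb A^9$ is étale at a point exactly when its Jacobian determinant $\det\bigl(\partial\hat B_i/\partial s_j\bigr)$ is nonzero there. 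The plan is to compute this determinant explicitly and to show that it is a unit in $\mathcal O_{\rT}$, i.e. of the form $c\,s_0^{a}s_4^{b}s_5^{e}(s_0^{10}-s_4^{10})^{f}$ with $c\neq0$. If instead it is a nonzero rational function with extra factors, the claim holds on the dense open set where it does not vanish, and I would state it in that form.

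To keep the computation manageable, I would order the functions so that the Jacobian becomes block triangular. The variables $s_7,s_8$ enter only through the open-string data ${\sf F}=-{\sf Y}s_7$ and $\dot s_8$. So $\hat\Delta^z$, which comes from $\Delta_{zz}=-\Delta^zC_{zzz}$ and ${\sf F}$, should be affine in $s_7$ with coefficient a unit in $s_0,s_4,s_5$. Likewise $\hat\Delta$, obtained from ${\sf R}_0(\hat\Delta^z)$, should involve $s_8$ linearly through $\dot s_7=-s_8$. The closed-string generators $5s_0^2$, $z$, $\hat G_{z\bar z}$, $\hat K_z$, $\hat S^{zz}$, $\hat S^z$, $\hat S$ depend only on $s_0,\dots,s_6$. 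The determinant therefore factors as a $7\times7$ closed block times the $2\times2$ block $\partial(\hat\Delta^z,\hat\Delta)/\partial(s_7,s_8)$, and the latter is triangular with explicit diagonal entries.

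Within the closed block I would again triangularize. The function $5s_0^2$ involves only $s_0$, and $z$ then adds $s_4$. $\hat G_{z\bar z}$ is linear in $s_5$ with unit coefficient, and $\hat K_z$ is affine in $s_3$ with coefficient $s_0^{12}/(s_4^{10}(s_0^{10}-s_4^{10}))$. $\hat S^{zz}$ involves $\Gamma^z_{zz}$, hence ${\sf R}_0(s_5)$, which brings in $s_6$ linearly with coefficient $\propto(s_0^{10}-s_4^{10})/s_5$. $\hat S^z$ and $\hat S$ require further ${\sf R}_0$-derivatives and should bring in $s_2$ and $s_1$ linearly, with unit leading coefficients.

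The main obstacle is confirming this linear dependence for $\hat S^z$ and $\hat S$, since cross-terms from the chosen holomorphic ambiguities might spoil the triangular structure. To handle this, I would first compute the entries of Table~\ref{tab:dictionary} symbolically and then evaluate the Jacobian directly, for instance in Mathematica. As a cheap sanity check, I would verify nonvanishing at a random rational point before factoring the determinant.
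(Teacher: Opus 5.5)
Your proposal is correct and follows the paper's proof, which simply computes the Jacobian determinant and finds $c\,s_4^{29}/(s_0^{49}s_5^2)$, a unit on $\rT$. The triangular structure you anticipate is exactly what Table~\ref{tab:dictionary} gives: with the variables ordered $(s_0,s_4,s_5,s_3,s_6,s_2,s_1,s_7,s_8)$ the Jacobian is lower triangular, the product of its diagonal entries reproduces this value with the factors of $(s_0^{10}-s_4^{10})$ cancelling, and so neither a computer check nor your dense-open-set fallback is needed.
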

\begin{proof}
    It suffices to compute the jacobian determinant. Indeed, it is given by $c\frac{s_4^{29}}{s_0^{49}s_5^2}$, which is never zero in $\rT$. In particular, that allows us to define vector fields $\frac{\partial}{\partial \hat S^{zz}}$ on $\rT$ by inverting the matrix.
\end{proof}

\begin{table}[ht]
\centering\small
\renewcommand{\arraystretch}{2}
\resizebox{0.95\textwidth}{!}{
\begin{tabular}{@{}lcc@{}}
\toprule
special geometry on $\mathcal M$ & $\beta\in\mathcal O_{\rT}$ with $\beta\circ\sigma=$ left column  \\
\midrule
$e^{-K}$ & $5\,s_0^{2}$  \\
$z$ & $z=\frac{s_4^{10}}{5^{5}s_0^{10}}$  \\
$G_{z\bar z}$ & $\hat G_{z\bar z}=-\frac{5\,s_0^{12}s_5}{s_4^{10}\,\dis}$  \\
$K_z$ & $\hK_z=\frac{s_0^{10}\bigl(5^{4}(6s_0^{10}-s_4^{10})+s_0^{2}s_3\bigr)}{s_4^{10}\,\dis}$  \\
$S^{zz}$ & $\hS^{zz}=\frac{s_4^{20}\,\dis\,(3s_5-s_0^{2}s_6)}{5^{12}\,s_0^{30}s_5}$ \\
$S^{z}$ & $\hS^{z}=\frac{s_4^{10}\bigl(5^{4}(19s_0^{10}+21s_4^{10})+12s_0^{2}s_3-4s_0^{4}s_2\bigr)}{4\cdot 5^{12}\,s_0^{20}}$ \\
$S$ & $\hS=\frac{
 5^{9}\bigl(13s_0^{20}+21s_0^{10}s_4^{10}+6s_4^{20}\bigr)+4s_0^{4}s_3\bigl(3s_3-s_0^{2}s_2\bigr)+5^{4}\bigl(2s_0^{2}s_3(29s_0^{10}+21s_4^{10})-20s_0^{14}s_2-4s_0^{6}s_1\dis\bigr)}{8\cdot 5^{12}\,s_0^{10}\,\dis}$  \\
$\Delta^{z}$ & $\hD^{z}=\frac{i\,s_4^{10}\,\dis\,s_7}{5^{8}\,s_0^{14}s_5}$  \\
$\Delta$ & $\hD=\frac{i\,s_8}{5^{8}\,s_0^{2}}$  \\
\bottomrule
\end{tabular}}
\caption{The generators of the polynomial ring from~\cite{alim_polynomial_2007} in terms of the $s_i$.}\label{tab:dictionary}
\end{table}
\subsection{Period Map and local expansions}\label{sec: periods}
For each $z$, choose a symplectic basis $\delta$ for the relative homology $H_3(Y_z,C_{\pm},\mathbb Z)$. The period matrix can be computed as

\begin{equation}\label{eq:period-matrix}
{\sf P(s)}=\left[\int_{\delta_i}\alpha_j\right]_{i,j}
\end{equation}

for each point  ${\sf s}=(z,\alpha)\in\rT$. 

Since both $\delta$ and $\alpha$ have constant intersection product, we can obtain relations that determine which matrices can be a period matrix. The group $\rG$ acts on $\rT$ by changing the basis $\alpha$. It also acts on the space of $5\times5$ matrices by right multiplication. In particular, by linearity of integrals, these two actions are compatible. The following result is fundamental for our computations and was proved in~\cite{espreafico_gauss-manin_2026}.

\begin{proposition}%
\label{tau}
For any period matrix $\sf P$ satisfying the relations given by the intersection matrices, there exists a unique $h\in \rG$ such that ${\sf P}\cdot h$ can be written in the form 
\begin{equation}
\label{taumatrix}
\tau = \begin{pmatrix}
1 & \tau_4 & \tau_5 & 0 & 0 \\
0 & \tau_0 & 1 & 0 & 0 \\
0 & 1 & 0 & 0 & 0\\
0& \tau_1 & \tau_3 & 1 & 0\\
0& \tau_2 & -\tau_0\tau_3 +\tau_1& -\tau_0 & 1
\end{pmatrix}.
\end{equation}
for some $\tau_i$.
\end{proposition}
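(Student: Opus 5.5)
The plan is to normalize the period matrix column by column, using the triangular shape of $\rG$ together with the two constraints a period matrix satisfies. The first is the bilinear (Riemann) relation coming from the constancy of the intersection forms. The second is the normalization $\int_{\delta_0}\alpha_0=1$ with $\alpha_0\in W_2$.

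We begin by fixing the structure of ${\sf P}$. Since $\alpha_0\in W_2$ is the class coming from $H^2(C_+\cup C_-)$, its integrals over the absolute cycles $\delta_1,\dots,\delta_4$ vanish. Together with $\int_{\delta_0}\alpha_0=1$, this says the first column of ${\sf P}$ is $(1,0,0,0,0)^T$. The lower right $4\times 4$ block ${\sf P}'=[\int_{\delta_i}\alpha_j]_{i,j\geq1}$ is then an absolute period matrix. The constancy of the pairings gives the relation ${\sf P}'^{T}\Psi^{-1}{\sf P}'=\Phi'$ (up to the normalization constants), where $\Psi$ is the intersection matrix of $\delta$ and $\Phi'$ is $\Phi$ with its first row and column removed.

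Right multiplication by $h\in\rG$ splits into two actions. The $g$-block acts on ${\sf P}'$ exactly as in Movasati's absolute case~\cite{movasati_modular-type_2015}. The entries $h_1,h_2$ add multiples of the first column to columns $3,4$ (the $\alpha_3,\alpha_4$ columns), which only changes the first row in those positions. So the first step is the absolute normalization. We use $g_{11}$ to make $\int_{\delta_2}\alpha_1=1$; this needs nonvanishing, which holds because the $(3,0)$ class has nonzero period on the appropriate cycle. We then use the column operations allowed by the upper triangular $g$ to produce the pattern of zeros and ones in the lower $4\times4$ block of~\eqref{taumatrix}.

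Concretely, the columns are treated in order. Column $1$ is scaled. Column $2$ receives $g_{12}\cdot$(col $1$) and is scaled by $g_{22}$, which yields $(\tau_5;1,0,\tau_3,\ast)$. Column $3$ is cleared in rows $1,2$ using columns $1,2$, and its row-$3$ entry is forced to be $1$. Column $4$ is cleared similarly, with its row-$4$ entry forced to be $1$. The four relations defining $\rG$ ($g_{11}g_{44}=1$, etc.) are forced on us here, and we must check they are compatible. This is where the bilinear relation enters: it determines the remaining entries of the block. In particular, the $(4,3)$ entry is $-\tau_0$, the $(4,2)$ entry is $-\tau_0\tau_3+\tau_1$, and the diagonal entries $1$ in columns $3,4$ come out automatically. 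So these entries are consequences of the relation, not extra freedom.

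Finally, $h_1,h_2$ are used to kill the $(0,3)$ and $(0,4)$ entries of the first row. This is possible since the first column is $(1,0,\dots)^T$, and it leaves the other entries unchanged. The first-row entries in columns $1,2$ survive as $\tau_4,\tau_5$.

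Uniqueness follows because at each step the group parameter is determined by requiring a specific entry to be $0$ or $1$. Equivalently, the stabilizer of the normal form in $\rG$ is trivial: if ${\sf P}h={\sf P}$ with ${\sf P}$ invertible, then $h=1$. The main obstacle is the middle step, namely checking that the entries forced by the bilinear relation agree with the constraints defining $\rG$, so that no overdetermination occurs. We will handle it by writing out ${\sf P}'^{T}\Psi^{-1}{\sf P}'=\Phi'$ in the partially normalized form and solving for the unknown entries.
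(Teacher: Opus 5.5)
The paper gives no proof of this proposition. It cites \cite{espreafico_gauss-manin_2026} and says the result follows by writing out the action of $\rG$ explicitly. Your setup is the expected one: the first column of ${\sf P}$ is $(1,0,0,0,0)^{\rm tr}$, $h_1,h_2$ only touch the $\delta_0$-entries of the $\alpha_3,\alpha_4$ columns, and the $g$-block normalizes the absolute part.

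However, the existence argument stops at its only nontrivial point, which you explicitly defer. The column-4 step is also misdescribed. Once columns $1$--$3$ are fixed, the relations of $\rG$ give $g_{44}=g_{11}^{-1}$, $g_{34}=-g_{12}g_{44}/g_{22}$ and $g_{24}=(g_{13}g_{44}+g_{23}g_{34})/g_{33}$. So only $g_{14}$ is free, and the remaining entries of column 4 cannot be ``cleared''; they must be forced.

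What closes this is that the four relations defining $\rG$ are exactly $g^{\rm tr}\Phi'g=\Phi'$ for upper triangular $g$. Let $\omega(x,y)=x^{\rm tr}\Psi^{-1}y$, and let $v_j,w_j$ be the columns of ${\sf P}'$ and ${\sf P}'g$. Then $\omega(w_i,w_j)=c\,(g^{\rm tr}\Phi'g)_{ij}=c\,\Phi'_{ij}$ for a fixed $c\neq0$, as soon as the parameters in columns $\le j$ obey the relations. Demanding that~\eqref{taumatrix} itself satisfy the bilinear relation shows that $\omega$ pairs only $\delta_1$ with $\delta_3$ and $\delta_2$ with $\delta_4$. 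The remaining entries then follow.
\begin{itemize}
\item $\omega(w_1,w_2)=0$ forces the $\delta_4$-entry of column $2$ to be $\tau_1-\tau_0\tau_3$.
\item Use $g_{13},g_{23}$ to clear the $\delta_1,\delta_2$-entries of column $3$. This is possible because those entries of $w_1,w_2$ form the invertible matrix with rows $(\tau_0,1)$ and $(1,0)$. Then $\omega(w_1,w_3)=0$ and $\omega(w_2,w_3)=c\,g_{22}g_{33}=c$ give $w_3=(0,0,1,-\tau_0)^{\rm tr}$.
\item Use $g_{14}$ to kill the $\delta_2$-entry of column $4$. Then $\omega(w_3,w_4)=\omega(w_2,w_4)=0$ and $\omega(w_1,w_4)=c\,g_{11}g_{44}=c$ give $w_4=(0,0,0,1)^{\rm tr}$.
\end{itemize}
Equivalently, take $g={\sf P}'^{-1}\tau'$, with $\tau'$ the absolute block of~\eqref{taumatrix}. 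It automatically preserves $\Phi'$, and it is upper triangular iff the column flags agree. That involves only the first two columns, because $\mathrm{span}(v_1,v_2)$ is Lagrangian and $\mathrm{span}(v_1,v_2,v_3)=v_1^{\perp}$.

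Two smaller points. For uniqueness, your first sentence is the right argument: each of the eight parameters is pinned by a fixed entry of the normal form or by a relation of $\rG$. But the ``equivalently'' is wrong. ${\sf P}h={\sf P}\Rightarrow h=1$ is automatic for invertible ${\sf P}$. It does not exclude two normal forms with different $\tau_i$ in the same orbit, which is what uniqueness means.

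Second, the normalization requires $\int_{\delta_2}\alpha_1\neq0$ and $\det\bigl[\int_{\delta_i}\alpha_j\bigr]_{i,j=1,2}\neq0$. An upper triangular $h$ only rescales these, by $g_{11}$ and $g_{11}g_{22}$, and in~\eqref{taumatrix} they equal $1$ and $-1$. So both are necessary conditions. Neither follows from the bilinear relations, nor from Hodge--Riemann positivity. Your justification of the first is therefore not valid, and the second is never mentioned. The proposition should be read on the open set where both hold, which the paper's formulation also leaves implicit.
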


This result means that, for a fixed basis of the homology $\delta$ each orbit of $\rG$ of $\rT$ has exactly one $\alpha$ for which the period matrix is of the form~\eqref{taumatrix}. Locally, the orbit space is just one dimensional and can be locally parametrized by the coordinate $s_4/s_0$, which means it is a 10-fold cover of $\mathcal M$. Let $\sf L_{\delta}$ be the set of all elements $s$ of $\rT$ with matrix of the form $\tau$. $\sf L_{\delta}$ depends on the choice of $\delta$ and, when we go around singular points, $\sf L_{\delta}$ changes according to the monodromy. That gives the $s_i$ certain functional equations with respect to the action of the monodromy group, which is a subgroup of $\mathbb Z^4\rtimes{\rm Sp}(4,\mathbb Z)$. In other words, this means we have a type of quasimodularity satisfied by $s_i$ with respect to the monodromy group (see~\cite{movasati_modular-type_2015} for a discussion in the closed case).

By choosing the basis $\delta$, we are fixing the periods at the initial point and using analytic continuation to obtain expressions for them as $z$ varies. The choice of $\delta$ is therefore the same as the choice of boundary conditions for solving ${\sf R}_0$. Notice however, that different choices of $\delta$ give different local expansions that might not be related by monodromy. 

\begin{proposition}\label{prop:tau-relations}
    Let $\delta=(\delta_0,\dots,\delta_4)$ be basis of $H_3(Y_\psi,C_\pm;\mathbb Z)$ for which the closed part is symplectic. Let ${\sf L}_{\delta}$ be the set of points of $\rT$ whose period matrix is of the form~\eqref{taumatrix}. Locally, in a simply connected neighborhood, we have
    \begin{enumerate}
        \item On ${\sf L}_{\delta}$,
    \begin{align*}
    \tau_0 = \frac{\int_{\delta_1} \alpha_1}{\int_{\delta_2} \alpha_1}, \quad
    \tau_1 = \frac{\int_{\delta_3} \alpha_1}{\int_{\delta_2} \alpha_1}, \quad
    \tau_2 = \frac{\int_{\delta_4} \alpha_1}{\int_{\delta_2} \alpha_1},
    \end{align*}
        \item $\tau_0$ is a local coordinate on ${\sf L}_{\delta}$, ${\sf R}_0|_{{\sf L}_{\delta}}=\frac{\partial}{\partial\tau_0}$, and
        \begin{equation}\label{eq:dz-dtau0}
        \frac{dz}{d\tau_0}={\sf R}_0(z)=\frac{s_4^{10}\left(s_4^{10}-s_0^{10}\right)}{s_0^{12}\,s_5},
        \end{equation}
        which does not vanish on the local neighborhood. In particular, $z$ is also a local coordinate on ${\sf L}_{\delta}$.
        \item As functions of $\tau_0$ on ${\sf L}_{\delta}$, the entries of~\eqref{taumatrix} satisfy
        \begin{equation}\label{eq:tau-relations}
        \frac{d\tau_1}{d\tau_0}=\tau_3,\qquad \frac{d\tau_2}{d\tau_0}=\tau_1-\tau_0\tau_3,\qquad \frac{d\tau_3}{d\tau_0}={\sf Y},\qquad \frac{d\tau_4}{d\tau_0}=\tau_5,\qquad \frac{d\tau_5}{d\tau_0}={\sf F}.
        \end{equation}
    
    \end{enumerate}
\end{proposition}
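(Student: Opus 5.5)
The plan is to work entirely with the Gauss--Manin connection along ${\sf L}_\delta$, using the structure of the period matrix in~\eqref{taumatrix} and the normal form~\eqref{eq:GM-R0} of $\nabla_{{\sf R}_0}$. The basic identity is that, for a flat family of cycles $\delta$ and any vector field $v$ on $\rT$, we have $v({\sf P}) = {\sf P}\,A_v^{T}$ (or ${\sf P}A_v$, depending on the convention for $\nabla_v\alpha = A_v\alpha$). This holds because $\int_{\delta_i}\nabla_v\alpha_j = v\!\left(\int_{\delta_i}\alpha_j\right)$, and the $\delta_i$ are locally constant in a simply connected neighbourhood. I will fix the convention so that the matrix in~\eqref{eq:GM-R0} acts on the column vector $(\alpha_0,\dots,\alpha_4)^T$.

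For item 1, the definition of ${\sf L}_\delta$ says that ${\sf P}=\tau$ there. Reading off the column of $\alpha_1$, we get $\int_{\delta_2}\alpha_1=1$, $\int_{\delta_1}\alpha_1=\tau_0$, $\int_{\delta_3}\alpha_1=\tau_1$ and $\int_{\delta_4}\alpha_1=\tau_2$. The ratio formulas then hold trivially on ${\sf L}_\delta$. They are also the natural $\rG$-invariant extensions: since $\alpha_1\in F^3$ and $\rG$ rescales it by $g_{11}$, these ratios are constant on $\rG$-orbits.

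For items 2 and 3, I first show that ${\sf R}_0$ is tangent to ${\sf L}_\delta$. On ${\sf L}_\delta$, compute ${\sf R}_0({\sf P}) = {\sf P}A^{T}$ with $A=A_{{\sf R}_0}$ from~\eqref{eq:GM-R0}, so that $\nabla\alpha_1=\alpha_2$, $\nabla\alpha_2={\sf F}\alpha_0+{\sf Y}\alpha_3$, $\nabla\alpha_3=-\alpha_4$, and $\nabla\alpha_0=\nabla\alpha_4=0$. This gives the derivative of each column of $\tau$:
\begin{itemize}
\item the $\alpha_1$-column has derivative equal to the $\alpha_2$-column $(\tau_4,1,0,\tau_3,\tau_1-\tau_0\tau_3)^T$;
\item the $\alpha_2$-column has derivative ${\sf F}\cdot(1,0,0,0,0)^T+{\sf Y}\cdot(0,0,0,1,-\tau_0)^T$;
\item the $\alpha_3$-column has derivative equal to minus the $\alpha_4$-column, which is $-(0,0,0,0,1)^T$;
\item the $\alpha_0$- and $\alpha_4$-columns are constant.
\end{itemize}

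Each of these is consistent with differentiating the shape~\eqref{taumatrix}, provided $\dot\tau_0=1$, $\dot\tau_1=\tau_3$, $\dot\tau_2=\tau_1-\tau_0\tau_3$, $\dot\tau_3={\sf Y}$, $\dot\tau_4=\tau_5$ and $\dot\tau_5={\sf F}$. The remaining entries are automatic. For example, the derivative of $-\tau_0\tau_3+\tau_1$ is $-\tau_3-\tau_0{\sf Y}+\tau_3=-\tau_0{\sf Y}$, which matches the $\alpha_2$-column, and the derivative of $-\tau_0$ is $-1$, which matches the $\alpha_3$-column.

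The flow of ${\sf R}_0$ therefore preserves the form~\eqref{taumatrix}, so ${\sf R}_0$ is tangent to ${\sf L}_\delta$. The uniqueness in Proposition~\ref{tau} ensures there is no hidden $\rG$-component. Since ${\sf R}_0(\tau_0)=1$, $\tau_0$ is a local coordinate on the one-dimensional ${\sf L}_\delta$ and ${\sf R}_0|_{{\sf L}_\delta}=\partial/\partial\tau_0$. The relations~\eqref{eq:tau-relations} are exactly the derivatives listed above.

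Finally, $dz/d\tau_0={\sf R}_0(z)$ with $z=s_4^{10}/(5^5s_0^{10})$. A direct computation using $\dot s_0$ and $\dot s_4$ from~\eqref{eq:R0} gives
\[
{\sf R}_0(z)=z\left(10\frac{\dot s_4}{s_4}-10\frac{\dot s_0}{s_0}\right).
\]
The $s_3$ terms cancel, and what remains is $s_4^{10}(s_4^{10}-s_0^{10})/(s_0^{12}s_5)$, which agrees with the value of $\hat G_{z\bar z}^{-1}$ found in Lemma~\ref{lem:dict}. This is nonzero on $\rT$ because $s_0s_4s_5(s_0^{10}-s_4^{10})$ is inverted there. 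Hence $z$ is also a local coordinate.

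The main obstacle is the tangency and dimension argument. I need ${\sf L}_\delta$ to be a smooth one-dimensional submanifold, meaning a section of the free $\rG$-action, which has 8-dimensional orbits in the 9-dimensional $\rT$. That follows from Proposition~\ref{tau}. I also need the index and transpose convention for ${\sf P}$ versus $A$ to be consistent with the placement of ${\sf F}$ and ${\sf Y}$ in~\eqref{eq:GM-R0}. I would settle this convention first by checking the $\alpha_1\mapsto\alpha_2$ entry.
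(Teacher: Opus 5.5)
Your argument is correct, but it runs the paper's computation in the opposite direction. The paper starts on ${\sf L}_\delta$ with $\tau_0$ already taken as a coordinate. It computes the connection matrix of $\frac{d}{d\tau_0}$ from ${\rm d}\tau=\tau\,{\sf A}^{\rm tr}$. Griffiths transversality then shows that this matrix has the shape~\eqref{eq:GM-R0}, with $\frac{d\tau_5}{d\tau_0}$ and $\frac{d\tau_3}{d\tau_0}$ in the slots of ${\sf F}$ and ${\sf Y}$; the forced vanishing entries give $\tau_1'=\tau_3$, $\tau_2'=\tau_1-\tau_0\tau_3$ and $\tau_4'=\tau_5$. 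Finally, the uniqueness part of Proposition~\ref{prop:R0} identifies ${\sf R}_0|_{{\sf L}_\delta}$ with $\frac{d}{d\tau_0}$, which yields $\tau_3'={\sf Y}$ and $\tau_5'={\sf F}$.

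You instead start from the known matrix $\nabla_{{\sf R}_0}$ and show that the flow of ${\sf R}_0$ propagates the normal form~\eqref{taumatrix}. Your route uses only the existence part of Proposition~\ref{prop:R0}. It also proves, rather than presupposes, that $\tau_0$ has nonvanishing differential along ${\sf L}_\delta$; both arguments take the one-dimensionality of ${\sf L}_\delta$ from Proposition~\ref{tau}. The paper's route has a conceptual advantage: three of the relations are forced by Hodge theory alone, and ${\sf Y}$ and ${\sf F}$ are identified intrinsically as $\frac{d\tau_3}{d\tau_0}$ and $\frac{d\tau_5}{d\tau_0}$, without using the explicit formula for ${\sf R}_0$.

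To make your tangency step airtight, state it as uniqueness for a linear ODE. Take an integral curve $\gamma$ of ${\sf R}_0$ through a point of ${\sf L}_\delta$, and integrate your six equations with ${\sf Y}(\gamma(t))$ and ${\sf F}(\gamma(t))$ to get a $\tau$-shaped matrix $M(t)$. Then $M$ solves the same system $M'=M\,{\sf A}_{{\sf R}_0}(\gamma(t))^{\rm tr}$ as ${\sf P}(\gamma(t))$, with the same initial value, so the two coincide. This replaces the vague remark about a ``hidden $\rG$-component''.

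Two small slips, neither affecting the argument. First, the first entry of the $\alpha_2$-column is $\tau_5$, not $\tau_4$; your relation $\tau_4'=\tau_5$ is nevertheless correct. Second, ${\sf R}_0(z)$ equals $5\hat G_{z\bar z}^{-1}$, not $\hat G_{z\bar z}^{-1}$.
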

\begin{proof}
    Most of this proposition is proved in~\cite[Proposition 7]{espreafico_gauss-manin_2026} and~\cite[Proposition 6]{espreafico_gauss-manin_2026}. Statement 1 is equation (33) in the proof of~\cite[Proposition 6]{espreafico_gauss-manin_2026} and Statements 2 and 3 are direct consequences of~\cite[Proposition 7]{espreafico_gauss-manin_2026} (note the misprint that appears there on the expression for $\frac{{\rm d}\tau_2}{{\rm d}\tau_0}$). For the sake of completeness, we reproduce the main ideas of the proof here, without carrying out all the computations. For those, we refer the reader to the reference cited above.

     Statement 1 is just a consequence of Proposition~\ref{tau} and the action of the group $\rG$. By writing the action explicitly, we conclude, for instance, that $\tau_0$ has to be the quotient of positions $(2,3)$ and $(2,2)$ of the period matrix. 

     Statements 2 and 3 use the uniqueness of Theorem~\ref{prop:R0} and the fact that the Gauss-Manin connection satisfies
      \begin{equation}\label{eq:gm-integral}
        {\rm d}\left(\int\omega\right) = \int\nabla\omega.
    \end{equation}
       
    First, notice that the We compute the Gauss-Manin connection on $\sf L_{\delta}$ by using the period matrix. Let $\sf A$ be the Gauss-Manin connection matrix satisfying $\nabla\alpha={\sf A}\alpha$, where $\alpha = (\alpha_0,\,\alpha_1,\,\dots,\,\alpha_4)^{\rm tr}$, i.e., the matrix that expresses $\nabla\alpha_i$ in terms of the other elements of the basis. Using~\eqref{eq:gm-integral}, we can write:
     \[
        {\rm d}\tau = \tau\cdot{\sf A}^{\rm tr}
     \]
     Which implies ${\sf A} = {\rm d}\tau^{\rm tr}\cdot (\tau^{-1})^{\rm tr}$. Applying the connection to the vector field $\frac{\rm d}{{\rm d}\tau}$ and using Griffiths transversality, we get:
       \[
        \nabla_{\frac{\rm d}{{\rm d}\tau}} =
\begin{pmatrix}
0 & 0 & 0 & 0 & 0 \\
0 & 0 & 1 & 0 & 0 \\
\frac{d\tau_5}{d\tau_0} & 0 & 0 & \frac{d\tau_3}{d\tau_0} & 0 \\
0 & 0 & 0 & 0 & -1 \\
0 & 0 & 0 & 0 & 0
\end{pmatrix}.
    \]

    A direct application of~\ref{prop:R0} gives ${\sf R_0}|_{\sf L_{\delta}}=\frac{\rm d}{{\rm d}\tau}$. Griffiths transversality gives us the relations 1, 2 and 4 in Statement 3. Comparing with ${\sf R_0}$ gives us the other two relations. Equation~\eqref{eq:dz-dtau0} is just applying $\sf R_0$ to $\frac{s_4^10}{5^5s_0^10}$ as in the proof of~\ref{lem:dict}.
\end{proof}
\begin{corollary}\label{cor:tau0-mirror-map}
    The functions $s_i|_{{\sf L}_{\delta}}$ are the solutions of~\eqref{eq:R0} with $\frac{d}{d\tau_0}$ in place of the dot, and their initial conditions are determined by $\delta$. 
    
    Since $\tau_0$ is exactly the quotient of the periods, the local expansion in it correspond to the expansions around the flat coordinates at the special points once the initial conditions are chosen. This finishes the proof of~\ref{prop:expansions}: the conditions chosen there are equivalent to fixing a basis of the homology so the solutions of the Picard-Fuchs equation are as in~\ref{pic-fuchs} and~\ref{sec: conif}.
\end{corollary}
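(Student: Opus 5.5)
The statement is Corollary~\ref{cor:tau0-mirror-map}. It should follow directly from Proposition~\ref{prop:tau-relations} together with the uniqueness of ${\sf R}_0$ in Proposition~\ref{prop:R0}. The plan is to restrict everything to the one-dimensional slice ${\sf L}_\delta$ and use $\tau_0$ as its coordinate.

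\textbf{Step 1: the $s_i$ solve the ${\sf R}_0$ system on ${\sf L}_\delta$.} By part 2 of Proposition~\ref{prop:tau-relations}, $\tau_0$ is a local coordinate on ${\sf L}_\delta$ and ${\sf R}_0|_{{\sf L}_\delta}=\partial/\partial\tau_0$. In particular ${\sf R}_0$ is tangent to ${\sf L}_\delta$, so ${\sf L}_\delta$ is locally an integral curve of ${\sf R}_0$. For each coordinate function we then have $\frac{d}{d\tau_0}(s_i|_{{\sf L}_\delta})={\sf R}_0(s_i)|_{{\sf L}_\delta}$. The right-hand side is exactly the expression $\dot s_i$ in~\eqref{eq:R0}. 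Hence the restrictions $s_i|_{{\sf L}_\delta}$ solve the system~\eqref{eq:R0} with $d/d\tau_0$ in place of the dot.

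\textbf{Step 2: $\delta$ determines the initial conditions.} The right-hand side of~\eqref{eq:R0} is regular on $\rT$, so the ODE has unique solutions. A solution is therefore fixed by the point of ${\sf L}_\delta$ over a base value of $z$. By Proposition~\ref{tau}, that point is determined by $\delta$: for each $z$, the $\rG$-orbit of enhanced structures contains exactly one basis $\alpha$ whose period matrix against $\delta$ has the normal form~\eqref{taumatrix}. Thus the initial conditions are functions of the periods over $\delta$. Changing $\delta$ changes ${\sf L}_\delta$, and with it the solution.

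\textbf{Step 3: identification with flat coordinates and proof of Proposition~\ref{prop:expansions}.} By part 1 of Proposition~\ref{prop:tau-relations}, $\tau_0=\int_{\delta_1}\alpha_1/\int_{\delta_2}\alpha_1$. Since $\alpha_1$ spans $F^3$, this is a ratio of two solutions of the Picard--Fuchs equation~\eqref{eq:PF}.

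\emph{Large complex structure point.} Choose $\delta$ so that the two periods are $\varphi_1$ and $\varphi_0$, up to the normalization fixed by $\Phi$. Then $\tau_0$ is a rescaling of the flat coordinate $t$, and $d/d\tau_0$ becomes $5q\,d/dq$ after the factor-of-$5$ normalization coming from Lemma~\ref{lem:Gzz and R0}.

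\emph{Conifold point.} Choose $\delta$ so that the two periods are $\mathrm w_2$ and $\mathrm w_1$. Then $\tau_0$ is proportional to $t_c$.

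Next, check that the initial conditions~\eqref{eq:init-LCS} and~\eqref{eq:init-coni} are what this choice of $\delta$ produces. Use the dictionary $5s_0^2\circ\sigma=e^{-K}$, whose holomorphic limit is $\varpi_0$ (respectively $\mathrm w_1$). Use also $z=s_4^{10}/(5^5s_0^{10})$. With these, the leading terms of $s_0$ and $s_4$ (and of $s_5$ via~\eqref{eq:dz-dtau0}) are read off from the period expansions. For the extra period, $d\tau_4/d\tau_0=\tau_5$ and $d\tau_5/d\tau_0={\sf F}$ tie $s_7,s_8$ to $\tau$. This is why half-integer powers of $q$ are needed for $s_7,s_8$.

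\textbf{Main obstacle.} I expect the hardest part to be this bookkeeping of normalizations: the powers of $5$, $\sqrt5$ and $2\pi i$ that relate $\tau_0$ to $t$ (respectively $t_c$), and the matching of the leading coefficients. One may have to act by an element of $\rG$ to bring the chosen Frobenius basis into the normal form~\eqref{taumatrix}. Once that is done, uniqueness of ODE solutions shows that the series obtained by solving~\eqref{eq:R0} coincide with the expansions of Sections~\ref{pic-fuchs} and~\ref{sec: conif}.
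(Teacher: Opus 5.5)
Your proposal is correct and follows the paper's own argument. Proposition~\ref{prop:tau-relations} gives ${\sf R}_0|_{{\sf L}_\delta}=\partial/\partial\tau_0$, so the $s_i|_{{\sf L}_\delta}$ solve~\eqref{eq:R0} in $\tau_0$; the normal form of Proposition~\ref{tau} ties the solution to $\delta$; and $\tau_0=\int_{\delta_1}\alpha_1/\int_{\delta_2}\alpha_1$ identifies the expansion variable with the flat coordinate, with your check of leading terms (e.g.\ recovering $s_5=t_c-\frac{3}{10}t_c^2+\dots$ from~\eqref{eq:dz-dtau0}) going beyond what the paper writes out.

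Two small caveats. First, on ${\sf L}_\delta$ the scaling of $s_0$ is fixed directly by the normalization $\int_{\delta_2}\alpha_1=1$ in~\eqref{taumatrix}; the dictionary and Lemma~\ref{lem:Gzz and R0} instead concern the non-holomorphic section $\sigma$ and need an extra holomorphic-limit identification. Second, since $z=0$ and $z=5^{-5}$ lie on the boundary of $\rT$, your closing ``uniqueness'' step is really the claim that the formal recursion fixes all remaining coefficients from the given leading ones, not Cauchy uniqueness, and the paper likewise takes this for granted.
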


\subsection{The AMSY Lie Algebra and Anomaly Equation }

\begin{theorem}\label{thm:VX}
    Let $p=(Y,C_{\pm},\alpha)$ and let $X\in\mathfrak h$. 
    We can define the vector field $V_X$ at $p$ as $V_X = \frac{\rm d}{{\rm d}u}\left(\exp(uX)\cdot p\right)$.

    The operator $\nabla_{V_X}$ at $p$, represented in the basis $\alpha$, has the same matrix as the transpose of $X$. Besides that, $V_X$ is the unique vector field $V$ satisfying
   \begin{equation}\label{vector-lie}
       \nabla_{V} = X^{\rm tr}
   \end{equation}
\end{theorem}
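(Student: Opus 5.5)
The plan is to compute $\nabla_{V_X}$ directly from the definition of the action of $\rG$, and then to get uniqueness from a dimension count together with the fact that $\rG$ acts freely on $\rT$.

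\textbf{Existence.} Fix $p=(Y,C_{\pm},\alpha)$ and consider the curve $u\mapsto p_u:=\exp(uX)\cdot p$ in $\rT$.

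1. Along this curve the underlying pair $(Y_\psi,C_\pm)$ is constant; only the basis changes. Write $\alpha$ as the column vector $(\alpha_0,\dots,\alpha_4)^{\rm tr}$. The action by right multiplication on period matrices, $\int_\delta(\alpha\cdot g)={\sf P}\cdot g$, means the basis at $p_u$ is
\[
\alpha(u)=\exp(uX)^{\rm tr}\alpha .
\]
2. Because the curve lies in a single fibre, the Gauss--Manin connection along $V_X$ is just the ordinary $u$-derivative of classes in the fixed vector space $H^3_{\rm dR}(Y_\psi,C_\pm)$.
3. Differentiating at $u=0$ gives
\[
\nabla_{V_X}\alpha=\frac{\rm d}{{\rm d}u}\Big|_{u=0}\exp(uX)^{\rm tr}\alpha=X^{\rm tr}\alpha .
\]
So in the basis $\alpha$ the matrix of $\nabla_{V_X}$ is $X^{\rm tr}$.
4. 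To pin down the transpose convention, I would check it against the relation ${\rm d}\tau=\tau\cdot{\sf A}^{\rm tr}$ from the proof of Proposition~\ref{prop:tau-relations}. Along the orbit, ${\sf P}(u)={\sf P}\exp(uX)$, so ${\rm d}{\sf P}={\sf P}X$, which gives ${\sf A}^{\rm tr}=X$.
5. This computation holds at every point, so $V_X$ is a regular vector field on $\rT$ with $\nabla_{V_X}=X^{\rm tr}$ globally.

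\textbf{Uniqueness.} Suppose $V$ is another vector field with $\nabla_V=X^{\rm tr}$, and set $W=V-V_X$, so that $\nabla_W=0$.

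1. First show $W$ is vertical, i.e.\ tangent to the $\rG$-orbits. The coefficient of $\alpha_2$ in $\nabla_W\alpha_1$ is the Kodaira--Spencer / Griffiths-transversality term, which is proportional to the derivative of the base coordinate $z=s_4^{10}/(5^5s_0^{10})$ along $W$. This is the entry equal to $1$ in~\eqref{eq:GM-R0}, and it comes from ${\rm d}\sigma(\partial_z)$ in Lemma~\ref{lem:Gzz and R0}. Since $\nabla_W=0$, this term vanishes, so $W(z)=0$ and $W$ is vertical. This step uses that $C_{zzz}\neq0$ on $\rT$.
2. A vertical vector field at $p$ is of the form $V_{X'}$ for some $X'\in\mathfrak h$, because $\rG$ acts on the fibres of $\rT\to\mathcal M$ and $\dim\rT=9=1+\dim\rG$. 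One checks $\dim\rG=8$: there are $2+10$ entries subject to $4$ relations.
3. By existence, $\nabla_{V_{X'}}=X'^{\rm tr}$. Since $\nabla_W=0$, this forces $X'=0$.
4. Concluding $W=0$ from $X'=0$ requires $X'\mapsto V_{X'}(p)$ to be injective, i.e.\ the action of $\rG$ on $\rT$ must be free. It is, since $\rT$ parametrizes bases and $g\cdot\alpha=\alpha$ forces $g=1$.

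\textbf{Main obstacle.} The delicate step is showing that $\nabla_W=0$ forces $W$ to be vertical, which needs Griffiths transversality and the non-vanishing of the Kodaira--Spencer map on $\rT$. If the abstract argument proves awkward, I would fall back on the explicit description of $\rT$ from Proposition~\ref{prop:R0}. The entries of $\nabla$ are linear in the nine coordinates of a tangent vector, and I would check that the resulting $9\times9$ linear system (entries of $\nabla_W$ against components of $W$) has trivial kernel. This is the same kind of Jacobian check as in Lemma~\ref{lem:dict-jac}.
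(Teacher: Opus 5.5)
Your existence argument is the same as the paper's. The paper also fixes a homology basis along the orbit, uses ${\sf P}(\exp(uX)\cdot p)={\sf P}(p)\exp(uX)$ together with ${\rm d}\int\omega=\int\nabla\omega$, and reads off $\nabla_{V_X}=X^{\rm tr}$, which is your step 4. Differentiating $\alpha(u)=\exp(uX)^{\rm tr}\alpha$ inside the fixed fibre is an equivalent formulation.

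For uniqueness you take a genuinely different route. The paper argues that the period map is locally injective, via the normal form of Proposition~\ref{tau}. It then observes that $\nabla_V=X^{\rm tr}$ forces ${\rm d}{\sf P}(V)={\sf P}\,X={\rm d}{\sf P}(V_X)$, hence $V=V_X$. You instead show directly that $V\mapsto\nabla_V$ is injective on $T_p\rT$, by splitting a tangent vector into two parts:
\begin{itemize}
\item a transverse part, killed by Griffiths transversality and the nonvanishing Kodaira--Spencer entry;
\item a vertical part, killed by the existence statement and the count $\dim\rT=1+\dim\mathfrak h$.
\end{itemize}
Your version is intrinsic: it needs no homology basis or period normal form, and it isolates exactly which structural facts are used. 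The paper's version is shorter once Proposition~\ref{tau} is available, since that proposition supplies the immersion property of the period map.

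One small correction to your step 4. Injectivity of $X'\mapsto V_{X'}(p)$ is not what takes you from $X'=0$ to $W(p)=0$; that step is immediate. Injectivity is what step 2 needs: it makes the image of $\mathfrak h$ $8$-dimensional, so that it fills $\ker{\rm d}z_p$, which is $8$-dimensional because ${\sf R}_0(z)\neq 0$ on $\rT$. You do not need freeness of the action to get it. The existence part already gives it, since $V_{X'}(p)=0$ forces $X'^{\rm tr}=\nabla_{V_{X'}}=0$.

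The same observation makes step 1 work without appealing to $C_{zzz}$. The $(1,2)$-entry of the connection matrix is a $1$-form on $\rT$. It vanishes on the image of $\mathfrak h$, because $(X'^{\rm tr})_{12}=X'_{21}=0$ for upper-triangular $X'$. It equals $1$ on ${\sf R}_0$. Hence it is ${\rm d}z/{\sf R}_0(z)$, and $\nabla_W=0$ gives $W(z)=0$.
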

\begin{proof}
    One way to see this is to compute directly using the action of the group $\rG$ in the coordinates $s_i$ and computing the vector fields $V_X$ directly and then the Gauss-Manin connection $\nabla_{V_X}$ in the same coordinates and check that the matrices are the same for any element of the basis of $\mathfrak h$. Using that $\frac{d}{du}\exp(uX)=\exp(uX)\cdot X$, we can use linearity arguments to extend it to the whole Lie algebra. 

    However, the result can be proved directly, without computing explicitly, using the period language from~\ref{sec: periods}. Since, along the integral curve $\gamma: u\mapsto\exp(uX)\cdot p$, the mirror quintic is unchanged, we can fix a basis of the homology and compute the period matrix. By definition of the action, we have
    \[
        {\sf P}(\exp(uX)\cdot p) = {\sf P}(p)\cdot \exp(uX)
    \]
     Using equation~\eqref{eq:gm-integral}, we have and computing ${\rm d}$, we have:
    \[
    {\sf P}(\nabla(\exp(uX)\cdot p)) = {\rm d}({\sf P}(\exp(uX)\cdot p)) = {\sf P}(\exp(uX)\cdot p)\cdot X{\rm d}u
    \]
    Since $V_X$ is $\frac{{\rm d}}{{\rm d}u}$ along $\gamma$ we have, after applying $u=0$:
    \[
    {\rm P}(\nabla_{V_X}(p)) = {\rm P}(p)\cdot X
    \]

    Since the basis of the homology is fixed, this implies that the Gauss-Manin connection acts as $X$. Rewriting it in the notation of~\ref{prop:R0}, we have
    \[
\nabla_{V_X} = X^{\rm tr}
    \]

    For the uniqueness statement, it suffices to see that the period map is locally injective (local coordinate in Proposition~\ref{tau}) and that any $V$ that satisfy~\eqref{vector-lie} is tangent to the curve $\gamma:u\mapsto\exp(uX)\cdot p$ for any $p$.
\[
    {\sf P}(\exp(uX)\cdot p)\cdot X = {\sf P}(\nabla_{V}(\exp(uX)\cdot p)) = {\sf P}(\exp(uX)\cdot p)\cdot X{\rm d}u(V|_{\gamma})
    \]
    Restricting to $\gamma$, we have $V|_{\gamma}=\frac{\rm d}{{\rm d}u}$, which implies $V (p) = \frac{\rm d}{{\rm d}u}\gamma(u)|_{u=0} = V_X(p)$.
\end{proof}
For the closed case (involving absolute cohomology), the existence and uniqueness statement was proved in~\cite{alim_gaussmanin_2016} and extended to general Dwork families in~\cite{nikdelan_modular_2020}. We just add the simple observation that the $\sf R_i$ can be expressed as exponentials.

Consider the following basis for the lie algebra:
{\small
\begin{align*}
X_1 &= 
\begin{bmatrix}
0 & 0 & 0 & 0 & 0 \\
0 & 0 & 0 & 0 & 0 \\
0 & 0 & -1 & 0 & 0 \\
0 & 0 & 0 & 1 & 0 \\
0 & 0 & 0 & 0 & 0
\end{bmatrix},
&
X_2 &=
\begin{bmatrix}
0 & 0 & 0 & 0 & 0 \\
0 & -1 & 0 & 0 & 0 \\
0 & 0 & 0 & 0 & 0 \\
0 & 0 & 0 & 0 & 0 \\
0 & 0 & 0 & 0 & 1
\end{bmatrix},
&
X_3 &=
\begin{bmatrix}
0 & 0 & 0 & 0 & 0 \\
0 & 0 & -1 & 0 & 0 \\
0 & 0 & 0 & 0 & 0 \\
0 & 0 & 0 & 0 & 1 \\
0 & 0 & 0 & 0 & 0
\end{bmatrix},
\\[6pt]
X_4 &=
\begin{bmatrix}
0 & 0 & 0 & 0 & 0 \\
0 & 0 & 0 & 0 & 0 \\
0 & 0 & 0 & 1 & 0 \\
0 & 0 & 0 & 0 & 0 \\
0 & 0 & 0 & 0 & 0
\end{bmatrix},
&
X_5 &=
\begin{bmatrix}
0 & 0 & 0 & 0 & 0 \\
0 & 0 & 0 & 1 & 0 \\
0 & 0 & 0 & 0 & 1 \\
0 & 0 & 0 & 0 & 0 \\
0 & 0 & 0 & 0 & 0
\end{bmatrix},
&
X_6 &=
\begin{bmatrix}
0 & 0 & 0 & 0 & 0 \\
0 & 0 & 0 & 0 & 1 \\
0 & 0 & 0 & 0 & 0 \\
0 & 0 & 0 & 0 & 0 \\
0 & 0 & 0 & 0 & 0
\end{bmatrix},
\\[6pt]
X_7 &=
\begin{bmatrix}
0 & 0 & 0 & 1 & 0 \\
0 & 0 & 0 & 0 & 0 \\
0 & 0 & 0 & 0 & 0 \\
0 & 0 & 0 & 0 & 0 \\
0 & 0 & 0 & 0 & 0
\end{bmatrix},
&
X_8 &=
\begin{bmatrix}
0 & 0 & 0 & 0 & 1 \\
0 & 0 & 0 & 0 & 0 \\
0 & 0 & 0 & 0 & 0 \\
0 & 0 & 0 & 0 & 0 \\
0 & 0 & 0 & 0 & 0
\end{bmatrix}.
\end{align*}}

We define the vector fields ${\sf R}_i$ such that $\nabla_{{\sf R}_i}=X_i^{\rm tr}$.

\begin{theorem}\label{thm:lie-algebra}
    The vector fields ${\sf R}_i$ and ${\sf R}_0$ (the modular vector field) generate a Lie algebra (called AMSY Lie algebra) over $\mathcal O_{\rT}$ with respect to the commutator. The subalgebra generated by the ${\sf R}_i$, $i=1,\dots 8$ is isomorphic to $\mathfrak h$. We present the multiplication table as Table~\ref{mult-table-lie}. 
   \begin{table}[H]
     {\footnotesize
\begin{equation*}\begin{array}{|c|c|c|c|c|c|c|c|c|c|}
\hline
    & {\sf R}_0 & {\sf R}_1 & {\sf R}_2 & {\sf R}_3 & {\sf R}_4 & {\sf R}_5 & {\sf R}_6 & {\sf R}_7 & {\sf R}_8 \\ \hline
{\sf R}_0 & 0   & {\sf R}_0         & -{\sf R}_0        & -{\sf R}_2 + {\sf R}_1      & Y {\sf R}_1+F{\sf R}_7      & 2{\sf R}_4 + Y{\sf R}_3+F{\sf R}_8 & {\sf R}_5        & 0   & {\sf R}_7 \\ \hline
{\sf R}_1 & -{\sf R}_0 & 0          & 0           & {\sf R}_3             & -2{\sf R}_4           & -{\sf R}_5               & 0          & -{\sf R}_7  & 0 \\ \hline
{\sf R}_2 & {\sf R}_0 & 0           & 0           & -{\sf R}_3            & 0               & -{\sf R}_5               & -2{\sf R}_6      & 0   & -{\sf R}_8 \\ \hline
{\sf R}_3 & {\sf R}_2 - {\sf R}_1 & -{\sf R}_3  & {\sf R}_3         & 0               & -{\sf R}_5            & -2{\sf R}_6              & 0          & -{\sf R}_8   & 0 \\ \hline
{\sf R}_4 & -Y{\sf R}_1-F{\sf R}_7 & 2{\sf R}_4 & 0        & {\sf R}_5             & 0               & 0                  & 0          & 0   & 0 \\ \hline
{\sf R}_5 & -2{\sf R}_4 - Y {\sf R}_3-F{\sf R}_8 & {\sf R}_5 & {\sf R}_5 & 2{\sf R}_6           & 0               & 0                  & 0          & 0   & 0 \\ \hline
{\sf R}_6 & -{\sf R}_5 & 0         & 2{\sf R}_6        & 0               & 0               & 0                  & 0          & 0   & 0 \\ \hline
{\sf R}_7 & 0   &{\sf R}_7          & 0           & {\sf R}_8               & 0               & 0                  & 0          & 0   & 0 \\ \hline
{\sf R}_8 & -{\sf R}_7  & 0          &  {\sf R}_8         & 0               & 0               & 0                  & 0          & 0   & 0 \\ \hline
\end{array}
\end{equation*}}
\caption{Multiplication table for the Lie Algebra generated by the $\sf R_i$}\label{mult-table-lie}
\end{table}
\end{theorem}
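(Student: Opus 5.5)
The plan is to reduce every bracket to a computation with Gauss--Manin connection matrices, and then use the uniqueness statements of Proposition~\ref{prop:R0} and Theorem~\ref{thm:VX}. First I will show that the assignment $V\mapsto \nabla_V$ is injective and $\mathcal O_{\rT}$-linear on vector fields of $\rT$. Linearity, $\nabla_{fV+gW}=f\nabla_V+g\nabla_W$, is built into the definition of a connection. For injectivity I will use the period map, as in the uniqueness part of Theorem~\ref{thm:VX}. By Proposition~\ref{tau} it is a local isomorphism onto its image, so a vector field with $\nabla_V=0$ leaves all periods ${\sf P}$ constant and therefore vanishes. The nine matrices $A_{{\sf R}_0},X_1^{\rm tr},\dots,X_8^{\rm tr}$ are linearly independent over $\mathcal O_{\rT}$: $A_{{\sf R}_0}$ has the entry $1$ in position $(2,3)$, which no $X_i^{\rm tr}$ reaches. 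Since $\dim\rT=9$, it follows that ${\sf R}_0,\dots,{\sf R}_8$ form a global frame of $T\rT$. Hence any vector field whose connection matrix is an $\mathcal O_{\rT}$-combination of these nine matrices equals the same combination of the ${\sf R}_i$.

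Next I will use flatness of the Gauss--Manin connection. Writing $\nabla_V\alpha=A_V\alpha$, the vanishing of the curvature gives
\[
A_{[V,W]}=V(A_W)-W(A_V)+A_WA_V-A_VA_W
\]
(up to the convention for the order of the matrix product, which I will fix once by checking a single entry). For $i,j\geq1$ the matrices $A_{{\sf R}_i}=X_i^{\rm tr}$ are constant, so $A_{[{\sf R}_i,{\sf R}_j]}=[X_i,X_j]^{\rm tr}$ up to an overall sign fixed by the convention. Expanding $[X_i,X_j]$ in the basis $X_1,\dots,X_8$ and applying injectivity gives the $8\times 8$ block of Table~\ref{mult-table-lie}. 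It also shows that ${\sf R}_i\mapsto X_i$ (or $-X_i$) is a Lie algebra isomorphism onto $\mathfrak h$. For instance, $[X_1,X_3]$ and $[X_3,X_7]$ can be checked by hand against the entries ${\sf R}_3$ and $-{\sf R}_8$, and this check pins down the sign.

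For the brackets with ${\sf R}_0$ the formula above gives $A_{[{\sf R}_i,{\sf R}_0]}={\sf R}_i(A_{{\sf R}_0})+[\,\cdot\,,\cdot\,]$. I therefore need the derivatives ${\sf R}_i({\sf Y})$ and ${\sf R}_i({\sf F})$, and this is where I expect the only real work. The approach I will try first avoids coordinates. By Theorem~\ref{thm:VX}, ${\sf R}_i$ is the infinitesimal generator of the action of $\exp(uX_i)\in\rG$. Moreover ${\sf Y}$ and ${\sf F}$ are intrinsically the $(3,2)$ and $(3,0)$ entries of $\nabla_{{\sf R}_0}$, and these transform in a computable way under a change of basis $\alpha\mapsto h\alpha$, together with the induced rescaling $({\rm d}h)({\sf R}_0)=\lambda(h){\sf R}_0+\dots$. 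Differentiating this transformation law at $u=0$ gives the derivatives. I expect ${\sf Y}$ and ${\sf F}$ to be semi-invariant, with ${\sf R}_1,{\sf R}_2$ acting by weights and ${\sf R}_k$, $k\ge3$, acting trivially or through ${\sf Y}$ and ${\sf F}$ themselves. As a fallback, and as a check, I will compute ${\sf R}_i({\sf Y})$ and ${\sf R}_i({\sf F})$ directly from \eqref{eq:YF}. This requires writing the ${\sf R}_i$ in the coordinates $s_0,\dots,s_8$ by solving $\nabla_{{\sf R}_i}=X_i^{\rm tr}$ with the explicit Gauss--Manin connection of~\cite{espreafico_gauss-manin_2026}.

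With these derivatives in hand, each $A_{[{\sf R}_0,{\sf R}_i]}$ becomes an explicit matrix. I will decompose it as an $\mathcal O_{\rT}$-combination of $A_{{\sf R}_0}$ and the $X_k^{\rm tr}$, and read off the first row and column of Table~\ref{mult-table-lie}. The nonconstant coefficients ${\sf Y}$ and ${\sf F}$ in $[{\sf R}_0,{\sf R}_4]$ and $[{\sf R}_0,{\sf R}_5]$ should arise exactly from the products $A_{{\sf R}_0}X_4^{\rm tr}$ and $A_{{\sf R}_0}X_5^{\rm tr}$, since the $(3,2)$ and $(3,0)$ entries of $A_{{\sf R}_0}$ feed into them. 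Antisymmetry fills in the rest of the table. Closure over $\mathcal O_{\rT}$ is then immediate, because every bracket lies in the $\mathcal O_{\rT}$-span of ${\sf R}_0,\dots,{\sf R}_8$.
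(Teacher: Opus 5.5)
Your proof is correct, but it takes a different route from the paper. The paper solves $\nabla_{{\sf R}_i}=X_i^{\rm tr}$ in the coordinates $s_0,\dots,s_8$ and lists ${\sf R}_1,\dots,{\sf R}_8$ explicitly. It then gets the table by bracketing these, and the explicit ${\sf R}_0$ of \eqref{eq:R0}, by direct computation. You never use coordinates: since the nine fields form a frame, a bracket is determined by its connection matrix, and flatness computes that matrix. For $i,j\ge 1$, with your convention $\nabla_V\alpha=A_V\alpha$, your formula gives $A_{[{\sf R}_i,{\sf R}_j]}=[X_i,X_j]^{\rm tr}$ exactly. So ${\sf R}_i\mapsto X_i$ is a homomorphism and the sign hedge is unnecessary; your checks $[X_1,X_3]=X_3$ and $[X_3,X_7]=-X_8$ agree with this.

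Your route explains the shape of the table and shows that it depends only on the normal form \eqref{eq:GM-R0} and on $\mathfrak h$. It should therefore carry over to the bicubic, with its own ${\sf Y},{\sf F}$, granting the same flatness there. The paper's route, by contrast, produces the explicit ${\sf R}_i$, which are needed later anyway, e.g.\ in Theorems~\ref{thm:ambiguity} and~\ref{partial-int}.

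The step you flagged as the real work can actually be skipped. Index entries as (row, column). No $X_k^{\rm tr}$ has a nonzero entry in positions $(\alpha_1,\alpha_2)$, $(\alpha_2,\alpha_3)$ or $(\alpha_2,\alpha_0)$. Now look at $A_{[{\sf R}_i,{\sf R}_0]}={\sf R}_i(A_{{\sf R}_0})+A_{{\sf R}_0}X_i^{\rm tr}-X_i^{\rm tr}A_{{\sf R}_0}$. The $(\alpha_1,\alpha_2)$ entry fixes the ${\sf R}_0$-coefficient of the bracket. The other two entries then force ${\sf R}_i({\sf Y})$ and ${\sf R}_i({\sf F})$. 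This gives ${\sf R}_1{\sf Y}=-3{\sf Y}$, ${\sf R}_1{\sf F}=-2{\sf F}$, ${\sf R}_2{\sf Y}={\sf Y}$, ${\sf R}_2{\sf F}={\sf F}$, ${\sf R}_7{\sf F}=-{\sf Y}$, and zero otherwise, consistent with \eqref{eq:YF}.

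Two small slips. In your $1$-based indexing, ${\sf Y}$ and ${\sf F}$ are the $(3,4)$ and $(3,1)$ entries of $A_{{\sf R}_0}$, not $(3,2)$ and $(3,0)$. Also, the ${\sf F}$-terms in $[{\sf R}_4,{\sf R}_0]$ and $[{\sf R}_5,{\sf R}_0]$ come from $X_i^{\rm tr}A_{{\sf R}_0}$, not from $A_{{\sf R}_0}X_i^{\rm tr}$.
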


The main motivation to introduce such Lie algebra is to give an intrinsic and algebraic framework for the topological string partition functions and the holomorphic anomaly equations. This idea was mainly developed in~\cite{alim_gaussmanin_2016}, and gives a purely algebraic (in terms of the moduli space of Hodge structures) description of the anomaly equations as emanating from the Lie Algebra. We explain this in the following pages.
\begin{proof}
    Computing the vector fields via~Theorem~\ref{thm:VX}, we obtain
{\small
\begin{align*}
{\sf R}_1 &= s_{5}\,\frac{\partial}{\partial s_{5}}
+ s_{6}\,\frac{\partial}{\partial s_{6}}
+ s_{7}\,\frac{\partial}{\partial s_{7}} \\[4pt]
{\sf R}_2 &= \frac{s_{0}}{2}\,\frac{\partial}{\partial s_{0}}
+ 2s_{1}\,\frac{\partial}{\partial s_{1}}
+ 3s_{2}\,\frac{\partial}{\partial s_{2}}
+ 4s_{3}\,\frac{\partial}{\partial s_{3}}
+ \frac{s_{4}}{2}\,\frac{\partial}{\partial s_{4}}
+ 3s_{5}\,\frac{\partial}{\partial s_{5}}
+ 2s_{6}\,\frac{\partial}{\partial s_{6}}
+ s_{8}\,\frac{\partial}{\partial s_{8}} \\[4pt]
{\sf R}_3 &= -\,\frac{s_{5}\left(3125\,s_{0}^{6}
+ s_{2}
+ \dfrac{(-3125\,s_{0}^{8}-s_{3})\,s_{6}}{s_{5}}\right)}
{625\left(-s_{0}^{10} + s_{4}^{10}\right)}\,
\frac{\partial}{\partial s_{1}}
+ s_{6}\,\frac{\partial}{\partial s_{2}}
+ s_{5}\,\frac{\partial}{\partial s_{3}}
+ s_{7}\,\frac{\partial}{\partial s_{8}} \\[4pt]
{\sf R}_4 &= \frac{625\left(s_{0}^{10} - s_{4}^{10}\right)}{s_{5}}\,
\frac{\partial}{\partial s_{6}} \\[4pt]
{\sf R}_5 &= -\,\frac{3125\,s_{0}^{8} + s_{3}}{s_{5}}\,
\frac{\partial}{\partial s_{1}}
- \frac{625\left(-s_{0}^{10} + s_{4}^{10}\right)}{s_{5}}\,
\frac{\partial}{\partial s_{2}} \\[4pt]
{\sf R}_6 &= \frac{\partial}{\partial s_{1}} \\[4pt]
{\sf R}_7 &= \frac{\partial}{\partial s_{7}} \\[4pt]
{\sf R}_8 &= \frac{\partial}{\partial s_{8}}
\end{align*}
}
The result follows by direct computation.
\end{proof}

\begin{remark}\label{rem:AMSY}
    The first six vector fields ${\sf R}_i$ correspond to the ones in the last section of~\cite{alim_gaussmanin_2016}, when considering the closed case. The last two appear in the real case. 
\end{remark}

The next step is to show that this Lie Algebra corresponds to Walcher's extended holomorphic anomaly equation, i.e., that the generating functions $\mathcal G^{\chi}$ satisfy a system of equations involving the ${\sf R}_i$. The Lie algebra structure allows us to easily obtain all the derivatives with respect to the $s_i$ and then solve it by partial integration.

\begin{lemma}\label{der-prop}
    We define $G^{\chi}=i^{\chi}s_0^{2\chi}\hat{{\mathcal G}}^{\chi}$, where $\hat{\mathcal{G}^\chi}$ is the regular function on $\rT$ obtained after applying the dictionary~\ref{tab:dictionary}.
    
    We can relate the vector fields $\sf R_i$ with the derivations on $\mathcal M$. 
    \begin{align}
       5i^{\chi}G_{z\bar z}^{-1} D_z \mathcal G^\chi = (s_0^{-2\chi}{\sf R}_0G^\chi)\circ\sigma\\
        5^{-6}G_{z\bar z}^{-2}\frac{\partial}{\partial S^{zz}}= (-s_0^{4}{\sf R}_4(-))\circ\sigma\\
       5^{-7}G_{z\bar z}^{-1} \frac{\partial}{\partial \Delta^z} = (i\ s_0^{2}{\sf R}_7(-))\circ\sigma
    \end{align}

    In particular, these expressions lift to expressions involving the regular functions on the left of Table~\ref{tab:dictionary} and the derivatives defined in Lemma~\ref{lem:dict-jac}.
\end{lemma}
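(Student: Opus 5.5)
The plan is to verify each of the three identities by pulling back along $\sigma$, using Lemma~\ref{lem:Gzz and R0}, the dictionary of Lemma~\ref{lem:dict} (Table~\ref{tab:dictionary}) and the explicit expressions for ${\sf R}_4$ and ${\sf R}_7$ from the proof of Theorem~\ref{thm:lie-algebra}.

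\textbf{First identity.} Since $\mathcal G^\chi\in\Gamma(\mathcal L^{-\chi})$ and has no tangent indices, $D_z\mathcal G^\chi=\partial_z\mathcal G^\chi-\chi K_z\mathcal G^\chi$. On the other side, Lemma~\ref{lem:Gzz and R0} gives ${\sf R}_0(f)\circ\sigma=5G_{z\bar z}^{-1}\partial_z(f\circ\sigma)$ for every regular $f$. Set $f=G^\chi=i^\chi s_0^{2\chi}\hat{\mathcal G}^\chi$ and use $(5s_0^2)\circ\sigma=e^{-K}$, so $s_0^{2\chi}\circ\sigma=5^{-\chi}e^{-\chi K}$. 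Then
\[
\partial_z\big(s_0^{2\chi}\hat{\mathcal G}^\chi\big)\circ\sigma=5^{-\chi}e^{-\chi K}\big(\partial_z\mathcal G^\chi-\chi K_z\mathcal G^\chi\big)=5^{-\chi}e^{-\chi K}D_z\mathcal G^\chi .
\]
Multiplying by $5G_{z\bar z}^{-1}$ and dividing by $s_0^{2\chi}\circ\sigma$ gives the first identity. This step is short and only needs care with the weight convention for $D_z$ in \eqref{eq:Dz}.

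\textbf{Second and third identities.} These are identities between derivations on $\mathcal O_{\rT}$, read through the étale coordinates of Lemma~\ref{lem:dict-jac}. Since ${\sf R}_4$ only involves $\partial/\partial s_6$ and ${\sf R}_7=\partial/\partial s_7$, I would compute how each dictionary function depends on $s_6$ and $s_7$.

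For the second identity, only $\hS^{zz}$ involves $s_6$, and $\partial\hS^{zz}/\partial s_6=-\dis s_4^{20}/(5^{12}s_0^{28}s_5)$. Hence
\[
{\sf R}_4=\frac{5^4\dis}{s_5}\,\frac{\partial\hS^{zz}}{\partial s_6}\,\frac{\partial}{\partial\hS^{zz}} .
\]
Substituting $\hat G_{z\bar z}$ from the table, this coefficient should equal $-5^{-6}\hat G_{z\bar z}^{-2}s_0^{-4}$, which gives the second identity.

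For the third identity, only $\hD^z$ contains $s_7$, since $\hD$ involves $s_8$ and not $s_7$. So ${\sf R}_7=\big(\partial\hD^z/\partial s_7\big)\,\partial/\partial\hD^z$, where $\partial\hD^z/\partial s_7=i s_4^{10}\dis/(5^8s_0^{14}s_5)$. Comparing with $\hat G_{z\bar z}^{-1}$ gives $i s_0^2{\sf R}_7=5^{-7}\hat G_{z\bar z}^{-1}\partial/\partial\hD^z$.

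\textbf{Main obstacle.} The key point is that ${\sf R}_4$ and ${\sf R}_7$ are exactly multiples of a single coordinate derivation in the $\hat B$ coordinates. This holds only because no other dictionary entry depends on $s_6$ (respectively $s_7$). I would check this carefully against Table~\ref{tab:dictionary}, in particular that $\hS^z$ and $\hS$ contain no $s_6$. After that, the remaining work is tracking the signs and the powers of $5$ and $i$.
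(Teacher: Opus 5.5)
Your proposal is correct and follows essentially the same route as the paper. You prove the first identity from ${\sf R}_0(f)\circ\sigma=5G_{z\bar z}^{-1}\partial_z(f\circ\sigma)$ and $(5s_0^2)\circ\sigma=e^{-K}$ by differentiating $e^{-\chi K}$ directly, a slightly cleaner version of the paper's product-rule computation with the explicit ${\sf R}_0(s_0)$ and $\hat K_z$, and the second and third identities from the fact that $s_6$ and $s_7$ occur only in $\hS^{zz}$ and $\hD^z$, with coefficients that check out against Table~\ref{tab:dictionary} (e.g.\ $\frac{5^4\dis}{s_5}\cdot\frac{\partial\hS^{zz}}{\partial s_6}=-\frac{\dis^2 s_4^{20}}{5^8s_0^{28}s_5^2}=-5^{-6}s_0^{-4}\hat G_{z\bar z}^{-2}$).
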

    \begin{proof}
        We compute directly:
        
\begin{equation}\label{eq:derprop-1}
\begin{aligned}
({\sf R}_0G^{\chi})\circ\sigma&=i^{\chi}\left(\big({\sf R}_0(s_0^{2\chi})\big)\circ\sigma\cdot\mathcal G^{\chi}+(s_0\circ\sigma)^{2\chi}\cdot\big({\sf R}_0(i^{-\chi}s_0^{-2\chi}G^{\chi})\big)\circ\sigma\right)\\
&=i^{\chi}\left(\big(2\chi s_0^{2\chi-1}{\sf R}_0(s_0)\big)\circ\sigma\cdot\mathcal G^{\chi}+(s_0\circ\sigma)^{2\chi}\cdot\big({\sf R}_0(i^{-\chi}s_0^{-2\chi}G^{\chi})\big)\circ\sigma\right)
\end{aligned}
\end{equation}

        By the discussion above, Lemma~\ref{lem:Gzz and R0} and the explicit formula of ${\sf R}_0$ in Equation~\eqref{eq:R0}, we have
        
\begin{equation}\label{eq:derprop-2}
({\sf R}_0G^{\chi})\circ\sigma=i^{\chi}\left(\chi\left(s_0^{2\chi-2}\,\frac{3750s_0^{10}+s_0^2s_3-625s_4^{10}}{s_5}\right)\circ\sigma\cdot\mathcal G^{\chi}+(s_0\circ\sigma)^{2\chi}5G_{z\bar z}^{-1}\cdot\partial_z\mathcal G^{\chi}\right)
\end{equation}

        Using that $e^{-K}$, $G_{z\bar z}$ and $K_z$ can be expressed in terms of the variables $s_i$ as in~\ref{tab:dictionary}, we write:
        
\begin{equation}\label{eq:derprop-3}
\begin{aligned}
(s_0^{-2\chi}{\sf R}_0(G^{\chi}))\circ\sigma
&=i^{\chi}\left(-5\chi K_zG_{z\bar z}^{-1}\mathcal G^{\chi}+5G_{z\bar z}^{-1}\partial_z\mathcal G^{\chi}\right)\\
&=5i^{\chi}G_{z\bar z}^{-1}\cdot (-\chi K_z \mathcal G^{\chi}+\partial_z\mathcal G^{\chi})=5i^{\chi}G_{z\bar z}^{-1}D_z\mathcal G^{\chi}
\end{aligned}
\end{equation}

        For the second equality, we use that $S^{zz}$ can be written in terms of the $s_i$ as 
\begin{equation}\label{eq:derprop-Szz}
S^{zz}=\left(\frac{s_4^{20} \left(-s_0^{10} + s_4^{10}\right) \left(-3 s_5 + s_0^2 s_6\right)}{244140625\, s_0^{30} s_5}\right)\circ\sigma=G_{z\bar z}^{-1}\cdot\left( \dfrac{s_4^{10} \left(-3 s_5 + s_0^2 s_6\right)}{48828125\, s_0^{18}}\right)\circ\sigma.
\end{equation}

We use that this is the only variable among the propagators whose corresponding regular function contains $s_6$. That means that, when computing $(\partial_{s_6}(-))\circ\sigma$ in terms of the derivatives with respect to the propagators and $K_z$, we have
        
\begin{equation}\label{eq:derprop-ds6}
\left(\frac{\partial}{\partial s_6}(-)\right)\circ\sigma = \left(\frac{\partial \hat S^{zz}}{\partial s_6}\right)\circ\sigma\cdot\frac{\partial}{\partial S^{zz}}
\end{equation}

    Computing that, we get 
\begin{equation}\label{eq:derprop-dSzz}
G_{z\bar z}^{-1}\frac{\partial}{\partial S^{zz}}=\left(\frac{5^{11} s_0^{16}}{s_4^{10}}\frac{\partial}{\partial s_6}(-)\right)\circ\sigma.
\end{equation}
 Multiplying by $5^{-6}G_{z\bar z}^{-1}$, we obtain
        
\begin{equation}\label{eq:derprop-R4}
5^{-6}G_{z\bar z}^{-2}\frac{\partial}{\partial S^{zz}}=\left(5^{4}s_0^{4}\left(\frac{s_4^{10}-s_0^{10}}{s_5}\right)\frac{\partial}{\partial s_6}(-)\right)\circ\sigma = (-s_0^{4}{\sf R}_4(-))\circ\sigma
\end{equation}

        The same analysis can be made for $\Delta^{z}$. The table~\ref{tab:dictionary} gives
        
\begin{equation}\label{eq:derprop-Deltaz} \Delta^{z}=\left(\frac{i\,s_4^{10} (s_0^{10} - s_4^{10}) s_7}{390625 s_0^{14} s_5}\right)\circ\sigma=G_{z\bar z}^{-1}\cdot\left(-i\,5^{-7}s_0^{-2}s_7\right)\circ\sigma 
\end{equation}

       By using that $\left(\frac{\partial}{\partial s_7}(-)\right)\circ\sigma= \left(\frac{\partial\hat \Delta^{z}}{\partial s_7}\right)\circ\sigma\cdot\frac{\partial}{\partial \Delta^{z}}$ and that ${\sf R}_7 = \frac{\partial}{\partial s_7}$, we conclude:
       
\begin{equation}\label{eq:derprop-R7}
5^{-7}G_{z\bar z}^{-1} \frac{\partial}{\partial \Delta^{z}} = (i\,s_0^{2}{\sf R}_7(-))\circ\sigma
\end{equation}

    \end{proof}

\begin{lemma}\label{lem:R3}
    Using the same assumptions and definitions as the lemma above, we have:

\begin{equation}\label{eq:R3-lemma}
-\frac15 G_{z\bar z}\left(\frac{\partial \mathcal G^{\chi}}{\partial K_z} + S^z \frac{\partial \mathcal G^{\chi}}{\partial S} + S^{zz} \frac{\partial \mathcal G^{\chi}}{\partial S^z} + \Delta^z \frac{\partial \mathcal G^{\chi}}{\partial \Delta}\right) = ({\sf R}_3\mathcal G^{\chi})\circ\sigma = 0
\end{equation}
\begin{proof}
    The strategy is exactly the same. We use the expressions in Table~\ref{tab:dictionary} and Lemma~\ref{lem:dict-jac} to write the expression on the left hand side of~\eqref{eq:R3-lemma} in terms of $s_i$. After that we compare with $\sf R_3$. By Proposition~{\ref{al-HAE}}, we get zero.
\end{proof}
\end{lemma}
\begin{theorem}\label{alg-HAE-GMCD}
    The B-side generating functions $G^{\chi}$, $\chi>0$  of Euler characteristic $\chi$ are regular functions on $\rT$ in the variables $s_0,\dots,s_8$ given as solutions to the recursive differential equations:
    \begin{align}
        {\sf R}_i G^{\chi} = 0, i=1,3\\
        {\sf R}_2 G^{\chi} = \chi G^{\chi}\\
        {\sf R}_4 G^{\chi} = \frac1{5^8} \left(\sum_{\substack{\chi_1+\chi_2 = \chi-2\\ \chi_1,\chi_2\geq0}}{\sf R}_0G^{\chi_1}{\sf R}_0G^{\chi_2}+{\sf R}_0^2G^{\chi-2}\right)\\
        {\sf R}_7 G^{\chi} = -\frac1{5^8}{\sf R}_0 G^{\chi-1}
    \end{align}
with ${\sf R}_0G^0$ and ${\sf R}_0^2G^{-1}$ and given explicitly as 

\begin{equation}\label{eq:seeds}
\begin{gathered}
{\sf R}_0G^{0}=\frac{\dis^{2}s_7^{2}}{4s_5^{3}}+\frac{625\,\dis\,s_6}{2s_5^{2}}-\frac{5^{4}(1022s_0^{10}+3s_4^{10})+200s_0^{2}s_3}{48\,s_0^{2}s_5},\\
{\sf R}_0^{2}G^{-1}=\tfrac12{\sf F}.
\end{gathered}
\end{equation}
\end{theorem}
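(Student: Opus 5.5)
The plan is to transport Proposition~\ref{al-HAE} to $\rT$ along the section $\sigma$ via the dictionary of Lemma~\ref{lem:dict}. Lemma~\ref{lem:dict-jac} upgrades identities of the form $(\cdot)\circ\sigma$ to identities of regular functions on $\rT$, since the nine hatted generators form étale coordinates.

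\textbf{Regularity.} By Alim--Länge polynomiality, $\mathcal G^\chi$ is a polynomial in $S^{zz},S^z,S,\Delta^z,\Delta,K_z$ with coefficients rational in $z$ (the holomorphic ambiguity). Replacing each generator by its entry in Table~\ref{tab:dictionary} gives $\hat{\mathcal G}^\chi$. Since $z=s_4^{10}/(5^5s_0^{10})$ and the only denominators occurring are powers of $s_0,s_4,s_5$ and $s_0^{10}-s_4^{10}$, both $\hat{\mathcal G}^\chi$ and $G^\chi=i^\chi s_0^{2\chi}\hat{\mathcal G}^\chi$ lie in $\mathcal O_{\rT}$.

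\textbf{The equations for ${\sf R}_1,{\sf R}_2,{\sf R}_3$.} For ${\sf R}_1$ and ${\sf R}_2$ I will check, using the explicit fields from the proof of Theorem~\ref{thm:lie-algebra}, that every entry of the table is a weight vector:
\begin{itemize}
\item each hatted generator and $z$ has ${\sf R}_1$-weight $0$, which is immediate since $s_5,s_6,s_7$ enter only through the ratios $s_6/s_5$ and $s_7/s_5$ or not at all;
\item each has ${\sf R}_2$-weight $0$;
\item ${\sf R}_2 s_0^{2\chi}=\chi s_0^{2\chi}$.
\end{itemize}
These give ${\sf R}_1G^\chi=0$ and ${\sf R}_2G^\chi=\chi G^\chi$. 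The equation ${\sf R}_3G^\chi=0$ is Lemma~\ref{lem:R3}, combined with the third line of Proposition~\ref{al-HAE}. Because ${\sf R}_3$ does not touch $s_0$, it also kills the prefactor $s_0^{2\chi}$.

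\textbf{The equations for ${\sf R}_4$ and ${\sf R}_7$.} Multiply the first equation of Proposition~\ref{al-HAE} by $5^{-6}G_{z\bar z}^{-2}$ and use Lemma~\ref{der-prop}.
\begin{itemize}
\item The left side becomes $-s_0^4{\sf R}_4\hat{\mathcal G}^\chi=-i^{-\chi}s_0^{4-2\chi}{\sf R}_4G^\chi$, because ${\sf R}_4$ does not involve $s_0$.
\item Each product term $G_{z\bar z}^{-2}\mathcal G_1^{\chi_1}\mathcal G_1^{\chi_2}$ becomes $5^{-2}i^{-\chi_1-\chi_2}s_0^{-2(\chi-2)}\,{\sf R}_0G^{\chi_1}\,{\sf R}_0G^{\chi_2}$.
\end{itemize}
Since $\chi_1+\chi_2=\chi-2$, the powers of $s_0$ match. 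The constants give $5^{-8}$, and the sign works out because $i^{-\chi}\cdot i^{\chi-2}=-1$ cancels the minus in front of ${\sf R}_4$. The ${\sf R}_7$ equation is the same computation with the third line of Lemma~\ref{der-prop} and the factor $5^{-7}G_{z\bar z}^{-1}$.

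\textbf{Main obstacle: the term $G_{z\bar z}^{-2}\mathcal G_2^{\chi-2}$.} Here $D_z$ acts on $\mathcal G^{\chi-2}_1\in\Gamma(\mathcal L^{2-\chi}\otimes T^*\mathcal M)$, so it involves $\Gamma^z_{zz}$ as well as $K_z$. I will compute ${\sf R}_0\bigl(s_0^{2(\chi-2)}\cdot(\text{lift of }5G_{z\bar z}^{-1}D_z\mathcal G^{\chi-2})\bigr)$. Using Lemma~\ref{lem:Gzz and R0}, the derivative of the factor $G_{z\bar z}^{-1}$ produces $-5G_{z\bar z}^{-2}\partial_zG_{z\bar z}=-5G_{z\bar z}^{-1}\Gamma^z_{zz}$. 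This is exactly the Christoffel term of $D_z$ on a one-form, multiplied by $5G_{z\bar z}^{-1}$. The $K_z$ term is absorbed by ${\sf R}_0 s_0^{2(\chi-2)}$ exactly as in the first line of Lemma~\ref{der-prop}. Hence ${\sf R}_0^2G^{\chi-2}$ lifts $25\,i^{\chi-2}s_0^{2(\chi-2)}G_{z\bar z}^{-2}\mathcal G_2^{\chi-2}$, and the constants match the product terms.

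\textbf{Seeds.} For $\chi=1,2$ the low cases must be read with $\mathcal G^{-1}_2=\frac12\Delta_{zz}$. Translating via ${\sf F}\circ\sigma$ from Lemma~\ref{lem:Gzz and R0} should give ${\sf R}_0^2G^{-1}=\frac12{\sf F}$. For ${\sf R}_0G^0$, I will integrate \eqref{eq:G01} in the propagators as Alim--Länge do, choosing the same holomorphic ambiguity, and then apply the dictionary and the first line of Lemma~\ref{der-prop} with $\chi=0$. This is a direct but lengthy symbolic check, and I expect it to be where arithmetic errors are most likely.
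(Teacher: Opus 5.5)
Your proposal is correct and follows the same route as the paper: you transport Proposition~\ref{al-HAE} along $\sigma$ using Lemmas~\ref{der-prop}, \ref{lem:R3} and \ref{lem:dict-jac}, and you obtain the ${\sf R}_1$ and ${\sf R}_2$ equations from the weights of the generators and of $s_0^{2\chi}$. Your explicit check that differentiating the factor $5G_{z\bar z}^{-1}$ produces exactly the $-\Gamma^z_{zz}$ term of $D_z$ on $\mathcal G^{\chi-2}_1$, so that ${\sf R}_0^2G^{\chi-2}$ lifts $25\,i^{\chi-2}s_0^{2(\chi-2)}G_{z\bar z}^{-2}\mathcal G_2^{\chi-2}$, together with your translation of the seeds, fills in steps that the paper's proof leaves implicit.
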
    
\begin{proof}
    The proof is a consequence of Lemma~\ref{der-prop} and Lemma~\ref{lem:R3} and the holomorphic anomaly equation written as in Proposition~\ref{al-HAE}. Notice that the expressions from the lemmas lift to expressions in terms of the regular $\rT$ (by Lemma~\ref{lem:dict-jac}) and therefore we have that the equalities obtained are not only true on the image of $\sigma$. We just need to justify the annihilation of $\sf R_1$ and the equation involving $\sf R_2$. That can be seen from the fact that all the generators $S^{zz}$, $S^{z}$, $S$, $\Delta^z$, $\Delta$ and $K_z$ are annihilated by $\sf R_1$ and $\sf R_2$. This implies that $\hat{\mathcal G^{\chi}}$ is homogeneous of degree zero and since we are multiplying by $s_0^{2\chi}$ to obtain $G^{\chi}$, it's going to be homogeneous of degree $\chi$ and satisfy equation $\chi$. Here degree is meant with the weights appearing in the expression of $\sf R_2$ in Theorem~\ref{thm:lie-algebra}.
\end{proof}

Notice that, although the equation involves only some of the ${\sf R}_i$, the table of the commutators implies that we can compute any vector ${\sf R}_i$ of $G^\chi$, so we have a complete system. Also, since ${\sf R}_1$ to ${\sf R}_8$ come directly from the exponential from the group that has an eight dimensional orbit, there should be a one-dimensional freedom on the solutions of the equation, which is exactly the ambiguity.

\begin{theorem}\label{thm:ambiguity}
    The solutions of the differential equation from the theorem above are determined up to an ambiguity of the form
    
\begin{equation}\label{eq:amb-form}
s_0^{2\chi} h(s_0,s_4)
\end{equation}

   Where $h$ is a regular function on $\rT$ and is homogeneous of degree zero, i.e, it is actually a function in $v=s_4/s_0$.

\end{theorem}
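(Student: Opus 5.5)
The equations of Theorem~\ref{alg-HAE-GMCD} are linear in $G^{\chi}$ once the lower $G^{\chi'}$ are fixed. So the difference $A=G^{\chi}-\tilde G^{\chi}$ of two solutions satisfies the homogeneous system
\[
{\sf R}_1A={\sf R}_3A={\sf R}_4A={\sf R}_7A=0,\qquad {\sf R}_2A=\chi A .
\]
The plan is to show that this system forces $A=s_0^{2\chi}h$ with $h$ a degree-zero function of $s_0,s_4$.

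\textbf{Step 1: close the system under brackets.} Using the multiplication table~\ref{mult-table-lie}, I will show that $A$ is annihilated by every ${\sf R}_i$ with $i\neq 0,2$:
\begin{itemize}
\item $[{\sf R}_3,{\sf R}_4]=-{\sf R}_5$ gives ${\sf R}_5A=0$.
\item $[{\sf R}_3,{\sf R}_5]=-2{\sf R}_6$ gives ${\sf R}_6A=0$.
\item $[{\sf R}_3,{\sf R}_7]=-{\sf R}_8$ gives ${\sf R}_8A=0$.
\end{itemize}
Together with ${\sf R}_2A=\chi A$, the function $A$ is therefore killed by ${\sf R}_1,{\sf R}_3,\dots,{\sf R}_8$ and is an eigenfunction of ${\sf R}_2$. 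These eight vector fields span the tangent spaces to the $\rG$-orbits, which are $8$-dimensional because $\rG$ acts freely on $\rT$ (Proposition~\ref{tau}). So $A$ is governed, up to the scaling weight, by one transverse direction, matching the heuristic after Theorem~\ref{alg-HAE-GMCD}.

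\textbf{Step 2: peel off the variables using the explicit formulas} for the ${\sf R}_i$ from the proof of Theorem~\ref{thm:lie-algebra}, in this order:
\begin{itemize}
\item ${\sf R}_6=\partial_{s_1}$, ${\sf R}_7=\partial_{s_7}$, ${\sf R}_8=\partial_{s_8}$ show that $A$ does not depend on $s_1,s_7,s_8$.
\item ${\sf R}_4$ is a nonvanishing multiple of $\partial_{s_6}$, since $s_5$ and $s_0^{10}-s_4^{10}$ are invertible on $\rT$. So $A$ is independent of $s_6$.
\item With no $s_1$-dependence, ${\sf R}_5$ reduces to a nonvanishing multiple of $\partial_{s_2}$, removing $s_2$.
\item ${\sf R}_3$ then reduces to $s_5\partial_{s_3}$, removing $s_3$.
\item ${\sf R}_1$ reduces to $s_5\partial_{s_5}$, removing $s_5$.
\end{itemize}
Hence $A=A(s_0,s_4)$. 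Finally ${\sf R}_2A=\tfrac12(s_0\partial_{s_0}+s_4\partial_{s_4})A=\chi A$, so $A$ is homogeneous of degree $2\chi$ in $(s_0,s_4)$. Writing $A=s_0^{2\chi}h(s_4/s_0)$, the function $h$ is regular on $\rT$, with only powers of $s_0$, $s_4$ and $s_0^{10}-s_4^{10}$ allowed in denominators.

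\textbf{Step 3: converse.} Any $s_0^{2\chi}h(s_4/s_0)$ solves the homogeneous system. Indeed, ${\sf R}_1,{\sf R}_3,{\sf R}_4,{\sf R}_7$ contain no $\partial_{s_0}$ or $\partial_{s_4}$ components, and ${\sf R}_2$ acts on such a function by the weight $\chi$. So the ambiguity is exactly of the stated form.

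\textbf{Main obstacle.} I expect the delicate point to be the order of elimination. ${\sf R}_3$ and ${\sf R}_5$ carry $\partial_{s_1}$ components with nonconstant coefficients, so $s_1$-independence (from ${\sf R}_6$, obtained via the bracket in Step 1) must be secured before reducing them. A second subtlety is that the bracket argument uses the table over $\mathcal O_{\rT}$, whose coefficients involve ${\sf Y}$ and ${\sf F}$ only in brackets with ${\sf R}_0$. The brackets used above avoid ${\sf R}_0$ entirely, so they are honest constant-coefficient identities.
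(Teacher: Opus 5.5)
Your proof is correct and follows the paper's argument. It uses the same three brackets $[{\sf R}_3,{\sf R}_4]=-{\sf R}_5$, $[{\sf R}_3,{\sf R}_5]=-2{\sf R}_6$ and $[{\sf R}_3,{\sf R}_7]=-{\sf R}_8$ to remove $s_1,s_2,s_8$, the explicit forms of ${\sf R}_1,{\sf R}_3,{\sf R}_4,{\sf R}_7$ to remove $s_5,s_3,s_6,s_7$, and ${\sf R}_2$ to obtain homogeneity. The differences are only cosmetic: you take all the brackets before peeling off variables, whereas the paper removes $s_6,s_7,s_5$ first, and you also spell out the converse and the regularity of $h$, which the paper leaves implicit.
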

\begin{proof}
    Assume $G^{\chi}$ is a solution. Let $\tilde{G}^{\chi} = G^{\chi}+f$ be another solution. The equation implies that $f$ does not depend of $s_6$ or $s_7$ because ${\sf R}_4f={\sf R}_7f=0$. Now, ${\sf R}_1 f=0$ implies $f$ does not depend on $s_5$, simply using the previous two facts and the expression for ${\sf R}_1$. To finish, we use the brackets $[{\sf R}_3,{\sf R}_4]=-{\sf R}_5$, $[{\sf R}_3,{\sf R}_5]=-2{\sf R}_6$ and $[{\sf R}_3,{\sf R}_7]=-{\sf R}_8$. Since ${\sf R}_3$ and ${\sf R}_4$ vanish on $f$, we get that ${\sf R}_5, {\sf R}_6$ and ${\sf R}_8$ also vanish on $f$. This implies that $f$ does not depend on $s_8$, $s_1$ and $s_2$. This implies ${\sf R}_3(f)=s_5\partial_{s_3}f = 0$ which implies $f$ does not depend on $s_3$ either. We conclude that $f$ is a regular function of $s_0$ and $s_4$.

    The homogeneity part of the argument comes from ${\sf R}_2$. It implies ${\sf R}_2(f)=\chi f$, which shows $f$ has to be degree $\chi$. Dividing by $s_0^{2\chi}$, we get the result.
\end{proof}

\begin{remark}\label{rem:ambiguity}
    Notice that the ambiguity is a one dimensional object. Notice ${\sf R}_i h=0$ for all $i$ except for ${\sf R}_0$, which means that it can be seen as a function on $\sf L$, and therefore, a function of $z$ or $\tau$. It can be fixed by studying its expansions at each point, since the expansions have properties that they are supposed to satisfy. In the next section, we quickly discuss how to fix the ambiguity for the real case.
\end{remark}

\begin{theorem}\label{partial-int}
    The anomaly equation and the Lie Algebra structure allows us to compute the derivatives with respect to the $s_i$ and solve the equation by partial integration.
\end{theorem}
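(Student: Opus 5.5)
The idea is to show that the system of Theorem~\ref{alg-HAE-GMCD}, together with the brackets of Table~\ref{mult-table-lie}, gives every partial derivative $\partial G^{\chi}/\partial s_i$ for $i\neq 0,4$ as an explicit regular function built from lower-genus data. With all these derivatives known, $G^{\chi}$ can be recovered by successive partial integrations, leaving only a function of $(s_0,s_4)$. By Theorem~\ref{thm:ambiguity} that remainder is exactly the ambiguity. We proceed by induction on $\chi$, assuming $G^{\chi'}$ is known for all $\chi'<\chi$, starting from the seeds ${\sf R}_0G^0$ and ${\sf R}_0^2G^{-1}$ in \eqref{eq:seeds}.

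First we read off the easy directions from the explicit vector fields in the proof of Theorem~\ref{thm:lie-algebra}.
\begin{itemize}
\item Since ${\sf R}_7=\partial_{s_7}$, we get $\partial_{s_7}G^{\chi}=-5^{-8}{\sf R}_0G^{\chi-1}$.
\item Since ${\sf R}_4=625(s_0^{10}-s_4^{10})s_5^{-1}\partial_{s_6}$, the ${\sf R}_4$-equation gives $\partial_{s_6}G^{\chi}$.
\item Using the brackets $[{\sf R}_3,{\sf R}_4]=-{\sf R}_5$, $[{\sf R}_3,{\sf R}_5]=-2{\sf R}_6$, $[{\sf R}_3,{\sf R}_7]=-{\sf R}_8$ and ${\sf R}_3G^{\chi}=0$, we obtain
\[
{\sf R}_5G^{\chi}={\sf R}_3({\sf R}_4G^{\chi}),\qquad {\sf R}_6G^{\chi}=\tfrac12{\sf R}_3({\sf R}_5G^{\chi}),\qquad {\sf R}_8G^{\chi}={\sf R}_3({\sf R}_7G^{\chi}).
\]
The right-hand sides are known: they are ${\sf R}_3$ applied to lower-genus expressions.
\item Since ${\sf R}_6=\partial_{s_1}$ and ${\sf R}_8=\partial_{s_8}$, this gives $\partial_{s_1}G^{\chi}$ and $\partial_{s_8}G^{\chi}$.
\item The expression for ${\sf R}_5$ is a combination of $\partial_{s_1}$ and $\partial_{s_2}$ with coefficient $625(s_0^{10}-s_4^{10})/s_5$ on $\partial_{s_2}$, which is invertible on $\rT$. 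Hence it yields $\partial_{s_2}G^{\chi}$.
\item From ${\sf R}_3G^{\chi}=0$, which involves only $\partial_{s_1},\partial_{s_2},\partial_{s_3},\partial_{s_8}$ with coefficient $s_5$ on $\partial_{s_3}$, we solve for $\partial_{s_3}G^{\chi}$.
\item From ${\sf R}_1G^{\chi}=0$ we solve for $s_5\partial_{s_5}G^{\chi}$.
\end{itemize}
The only remaining equation, ${\sf R}_2G^{\chi}=\chi G^{\chi}$, is the weighted Euler relation. It fixes homogeneity and so constrains the $(s_0,s_4)$ part.

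To evaluate the right-hand sides we need ${\sf R}_3$ and ${\sf R}_0$ applied to lower-genus quantities. These are computable from the explicit vector fields and \eqref{eq:R0}, since the induction hypothesis provides $G^{\chi-1},G^{\chi-2},\dots$ as explicit regular functions. We then integrate in a fixed order: $s_7$ first, then $s_6$, then $s_8,s_1,s_2,s_3,s_5$. At each stage we subtract what has already been integrated and integrate the next partial derivative. Consistency of the successive integrations, meaning independence of the residual from the previously treated variables, follows from equality of mixed partials. This in turn is guaranteed by the existence of the solution $G^{\chi}$ (Theorem~\ref{alg-HAE-GMCD}, via the dictionary and the Alim--Länge polynomiality). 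Finally, imposing homogeneity of degree $\chi$ under ${\sf R}_2$ reduces the leftover to $s_0^{2\chi}h(s_4/s_0)$, consistent with Theorem~\ref{thm:ambiguity}.

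The main obstacle is showing that each partial integration stays within $\mathcal O_{\rT}$, i.e.\ no logarithms appear. One must also show that the denominators introduced, namely powers of $s_5$ and of $(s_0^{10}-s_4^{10})$, remain allowed. The plan is to handle this by the polynomiality of Proposition~\ref{al-HAE}. In dictionary coordinates $G^{\chi}$ is $s_0^{2\chi}$ times a polynomial in $\hat S^{zz},\hat S^z,\hat S,\hat\Delta^z,\hat\Delta,\hat K_z$ with coefficients in $(s_0,s_4)$. Each of these generators is affine-polynomial in the variables being integrated, after clearing the allowed denominators. Alternatively, one can integrate directly in the coordinates of Lemma~\ref{lem:dict-jac}, where every step is the integration of a polynomial.
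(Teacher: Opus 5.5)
Your route is the paper's: $\partial_{s_7}$ and $\partial_{s_6}$ come straight from the ${\sf R}_7$ and ${\sf R}_4$ equations. Then $\partial_{s_5}$ comes from ${\sf R}_1G^\chi=0$. The brackets of ${\sf R}_3$ with ${\sf R}_4,{\sf R}_5,{\sf R}_7$ give ${\sf R}_5G^\chi,{\sf R}_6G^\chi,{\sf R}_8G^\chi$, hence $\partial_{s_1},\partial_{s_2},\partial_{s_8}$. Finally $\partial_{s_3}$ comes from ${\sf R}_3G^\chi=0$, followed by successive integration.

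However, your three bracket identities have the wrong sign. Since $[{\sf R}_3,{\sf R}_4]={\sf R}_3{\sf R}_4-{\sf R}_4{\sf R}_3=-{\sf R}_5$ and ${\sf R}_4{\sf R}_3G^\chi=0$, one gets ${\sf R}_3{\sf R}_4G^\chi=-{\sf R}_5G^\chi$. The correct relations, which are the ones the paper uses, are
\[
{\sf R}_5G^{\chi}=-{\sf R}_3{\sf R}_4G^{\chi},\qquad {\sf R}_6G^{\chi}=-\tfrac12{\sf R}_3{\sf R}_5G^{\chi},\qquad {\sf R}_8G^{\chi}=-{\sf R}_3{\sf R}_7G^{\chi}=\tfrac{1}{5^8}{\sf R}_3{\sf R}_0G^{\chi-1}.
\]
This is not cosmetic. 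For $\chi=1$ one has ${\sf R}_3{\sf R}_0G^0=s_5\,\partial_{s_3}({\sf R}_0G^0)=-\tfrac{25}{6}$. Your formula therefore gives $\partial_{s_8}G^1=+\tfrac{25}{6\cdot 5^8}$ instead of the value $-\tfrac{25}{6\cdot 5^8}$ visible in \eqref{eq:G1}.

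Next, ${\sf R}_4G^1={\sf F}/(2\cdot 5^8)$ depends only on $s_0,s_4,s_5,s_7$, so $\partial_{s_1}G^1=\partial_{s_2}G^1=0$ regardless of sign. Then ${\sf R}_3G^1=0$ gives $\partial_{s_3}G^1=-\tfrac{s_7}{s_5}\partial_{s_8}G^1$. With your sign the mixed-partial test $\partial_{s_7}\partial_{s_3}G^1=\partial_{s_3}\partial_{s_7}G^1=\tfrac{25}{6\cdot 5^8 s_5}$ fails. So the system you would integrate is not exact. Your own consistency argument (existence of $G^\chi$ forces equal mixed partials) is exactly what detects the error. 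With the signs corrected, your argument coincides with the paper's.

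A minor point: the regularity discussion goes beyond the paper, which takes regularity of $G^\chi$ from Theorem~\ref{alg-HAE-GMCD}. As stated it is inaccurate, since the generators are not affine in the integration variables: $\hat S$ is quadratic in $s_3$, and $\hat S^{zz}$ contains $s_6/s_5$. What you actually need is that $G^\chi$ is polynomial in $s_1,s_2,s_3,s_6,s_7,s_8$ and Laurent in $s_5$. Then $\partial_{s_5}G^\chi$ has no $s_5^{-1}$ term, and no logarithm can arise.
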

\begin{proof}
    $\partial_{s_6}$ and $\partial_{s_7}$ are computed right away in the equation. ${\sf R}_1G^{\chi}=0$ gives us $\partial_{s_5}$ after using the values for $\partial s_7$ and $\partial s_6$. Using that $[{\sf R}_3,{\sf R}_4]=-{\sf R}_5$ and $[{\sf R}_3,{\sf R}_5]=-2{\sf R}_6$, we can write ${\sf R}_5(G^{\chi}) = -{\sf R}_3{\sf R}_4(G^{\chi})$, which gives ${\sf R}_5(G^\chi)$ and then ${\sf R}_6(G^{\chi}) = -\frac12 {\sf R}_3{\sf R}_5(G^{\chi})$. But ${\sf R}_6$ is simply $\partial_{s_1}$ and this also allows computation of $\partial_{s_2}$ from the expression for ${\sf R}_5$. Finally, we use that $[{\sf R}_7,{\sf R}_3]={\sf R}_8$ to compute $\partial_{s_8}$ and ${\sf R}_3$ itself allows computation of $\partial_{s_3}$.

    This gives us a simple exact differential equation that can be solved directly by integrating in each variable, adding a constant and matching the constant to the next variable etc. 
\end{proof}

The theorem above, besides having a computational application, also gives some insight about the anomaly equation. Geometrically, the variables $s_i$, $i\ne0,4$ measure the choice of a basis of for the third cohomology that respects the Hodge structure. So the equation determines how the Hodge structure varies in terms of the complex structure variation (${\sf R}_0$ that appears on the RHS) of the lower genus amplitudes.

\section{Ambiguity}\label{sec:ambiguity}
In this section, our goal is to explain the algorithm to compute the amplitudes $G^{\chi}$. In order to be able to fix it, we make some assumptions that are standard in the Physics literature. For the closed invariants on the quintic, the works\cite{guo_structure_2018,chang_conifold_2026,chang_bcov_2026,chang_polynomial_2021} studied these properties on the A-model. For real invariants, the author is not aware of any works.

The first assumption we make is that the choice of ambiguity yields solutions $G^{\chi}$ that satisfy~\ref{eq:BPS-real}. 
\begin{lemma}\label{lem:amb-parity}
   For each nonzero monomial $m$ of $\rT$ we define $N(m)$ as the sum of the degrees of $m$ with respect to $s_4$, $s_7$ and $s_8$ modulo 2. If $G^{\chi}$ satisfy~\ref{eq:BPS-real}, then $N(m)\equiv\chi\mod2$ for all monomials $m$ of $G^{\chi}$. In particular, all monomials of the ambiguity $h^{\chi}$ have the degree corresponding to $s_4$ congruent to $\chi$.
\end{lemma}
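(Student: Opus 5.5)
The idea is to read the parity statement as invariance under a $\mathbb Z_2$-action on $\rT$, namely
\[
\iota:(s_4,s_7,s_8)\mapsto(-s_4,-s_7,-s_8),
\]
with all other $s_i$ fixed. Saying that $N(m)\equiv\chi \pmod 2$ for every monomial $m$ of $G^\chi$ is the same as saying $\iota^*G^\chi=(-1)^\chi G^\chi$. So the plan is to show that $\iota$ is compatible with the whole structure (the vector field ${\sf R}_0$, the ${\sf R}_i$, the seeds) and with the BPS expansion~\eqref{eq:BPS-real}.

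First I check that $\iota$ is a symmetry of ${\sf R}_0$. In~\eqref{eq:R0}, $s_4$ enters the equations for $\dot s_0,\dots,\dot s_6$ only through $s_4^{10}$, and $\dot s_4$ is linear in $s_4$, so these components are $\iota$-equivariant. Also $\dot s_7=-s_8$ is odd, as it should be. For $\dot s_8$, the factor $s_4^5$ is odd, which is the required behaviour. Hence $\iota_*{\sf R}_0={\sf R}_0$.

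The same inspection of the explicit formulas in Theorem~\ref{thm:lie-algebra} shows that each ${\sf R}_i$ is $\iota$-invariant. The case to watch is ${\sf R}_3$: its term $s_7\partial_{s_8}$ is even, and the other terms involve $s_4$ only through $s_4^{10}$. On the seeds in~\eqref{eq:seeds}: ${\sf R}_0G^0$ is even because $s_7$ appears squared. ${\sf R}_0^2G^{-1}=\tfrac12{\sf F}=-\tfrac12{\sf Y}s_7$ is odd, matching $\chi=-1$.

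Next, on the $q$-expansion side I use Proposition~\ref{prop:expansions}. Along the LCS solution, $s_4=q^{1/10}(\dots)$, and $q^{1/2}$ enters only through $s_7,s_8$ (and through $s_4^5$). The deck transformation $q^{1/10}\mapsto -q^{1/10}$ acts on this solution exactly as $\iota$. This requires checking that $s_4^{10}$ and the remaining $s_i$ are series in $q$, with $s_4/q^{1/10}$ also a series in $q$. Given that, a regular function $f$ on $\rT$ restricted to ${\sf L}_\delta$ has $q^{1/2}$-parity equal to its $\iota$-parity, modulo the ambiguity coming from $s_4^{10}$.

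Now compare with~\eqref{eq:BPS-real}. The right side contains only $q^{kd/2}$ with $k$ odd and $d\equiv\chi\pmod 2$, so it has half-integer exponent parity equal to $\chi$. The term $\tfrac12\mathcal F^{g_\chi}_A$ is present only for even $\chi$ and is a series in integer powers of $q$, so it is consistent with the same parity. The prefactor $s_0^{2\chi}$ is even. Hence $G^\chi|_{{\sf L}_\delta}$ has parity $(-1)^\chi$ under $q^{1/2}\mapsto -q^{1/2}$.

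To lift this to the statement about monomials, I decompose $G^\chi=G^\chi_++G^\chi_-$ into $\iota$-even and $\iota$-odd parts. Since $\iota$ commutes with all ${\sf R}_i$ and ${\sf R}_0$, and the seeds have the right parity, induction on $\chi$ shows that the wrong-parity part satisfies the homogeneous system. By Theorem~\ref{thm:ambiguity}, it is therefore of the form $s_0^{2\chi}h(s_0,s_4)$ with $h$ of the wrong $s_4$-parity. Its restriction to ${\sf L}_\delta$ must vanish by the previous paragraph. Because $z$ is a local coordinate on ${\sf L}_\delta$ (Proposition~\ref{prop:tau-relations}), this holomorphic function of $v=s_4/s_0$ vanishes identically, and so the wrong-parity part is zero. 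The claim about $h^\chi$ then follows immediately: $h^\chi$ involves only $s_0,s_4$, so its $N$ is just its $s_4$-degree.

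I expect the main obstacle to be the identification between $\iota$ and $q^{1/2}\mapsto-q^{1/2}$ on the expansions. This needs care with the initial conditions~\eqref{eq:init-LCS}, and with the fact that ${\sf L}_\delta$ is a $10$-fold cover, so $s_4\mapsto\zeta s_4$ with $\zeta^{10}=1$ also acts. I would handle this by proving uniqueness for the ODE ${\sf R}_0$ with $\iota$-transformed initial data, which gives $\iota(s(q))=s(e^{10\pi i}q)$, i.e. the branch change $q^{1/10}\mapsto -q^{1/10}$.
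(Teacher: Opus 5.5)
Your argument is the paper's, recast. The $\iota$-parity is exactly the grading by $N$. The paper also shows that everything in $G^\chi$ except the $s_0,s_4$-part has the right parity, via the partial-integration scheme of Theorem~\ref{partial-int}, and then uses the exponents $q^{kd/2}$ in~\eqref{eq:BPS-real} to force the ambiguity to have the right parity. Your version, via $\iota^*$-eigenspaces and Theorem~\ref{thm:ambiguity}, is tidier: it avoids saying what a ``monomial'' is in the localized ring, and it makes explicit that a function of $v$ vanishing on ${\sf L}_\delta$ is zero.

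However, your claim that every ${\sf R}_i$ is $\iota$-invariant is false, and your induction uses it. The fields ${\sf R}_7=\partial_{s_7}$ and ${\sf R}_8=\partial_{s_8}$ have even coefficient while $s_7,s_8$ are odd. Hence $\iota^*({\sf R}_7f)=-{\sf R}_7(\iota^*f)$, and likewise for ${\sf R}_8$. This is consistent with $[{\sf R}_3,{\sf R}_7]=-{\sf R}_8$, and with the term ${\sf F}{\sf R}_7$ in $[{\sf R}_0,{\sf R}_4]$, since ${\sf F}=-{\sf Y}s_7$ is odd. It matters: the right side $-5^{-8}{\sf R}_0G^{\chi-1}$ of the ${\sf R}_7$-equation has parity $(-1)^{\chi-1}$. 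If ${\sf R}_7$ commuted with $\iota^*$, this inhomogeneity would sit in the wrong-parity part of $G^\chi$. That part would then not solve the homogeneous system, and your decomposition argument would give nothing.

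The fix is the paper's observation that ${\sf R}_7,{\sf R}_8$ lower $N$ by one. Because ${\sf R}_7$ flips parity, ${\sf R}_7G^\chi_{\rm wrong}$ equals the $(-1)^\chi$-part of the right side, which is zero. The fields ${\sf R}_1,\dots,{\sf R}_4$ really are invariant, so $G^\chi_{\rm wrong}$ solves the homogeneous system, Theorem~\ref{thm:ambiguity} applies, and the rest of your argument goes through.

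On the obstacle you flag: uniqueness for the ${\sf R}_0$-flow from the data of Proposition~\ref{prop:expansions} cannot suffice. No component of ${\sf R}_0$ involves $s_7$, so $s_7\mapsto s_7+c$ sends solutions to solutions. For $c\neq 0$ this breaks the identification of $\iota$ with $q^{1/10}\mapsto -q^{1/10}$. What you need is the normalization that $s_7$ has no $q^0$ term, i.e. the extra period is $\sqrt z$ times a power series with no homogeneous solution added. This is part of the choice of $\delta$, and the paper uses the same fact implicitly when it says that only $s_4,s_7,s_8$ carry half-integer powers.
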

\begin{proof}
    Fix a monomial $m$. Note that all nonzero monomials in ${\sf R_i}(m)$ have the same value of $N$. Indeed, write ${\sf R_i}=\sum f_i\partial_{s_i}$. Take $i\ne7,8$. First, we consider the derivatives with respect to $s_i$ for $i\ne4,7,8$. For them it suffices to look at the monomials of $f_i$, since the do not affect $N(m)$ (if $N(m)\ne0$). Since all such $f_i's$ have only even powers of $s_4$ (indeed, multiple of ten), the product $f_i\cdot m$ will have all monomials with same $N(m)$. For $i=4,7,8$, we have to make sure that the accompanying $f_i$ has all monomials with $N=1$, which is readily checked by looking at the definitions. For $i=7,8$, if ${\sf R_i}(m)\ne0$, it has only one monomial $u$ and $N(u)=N(m)-1$.

    Now assume $G^{\chi}$ has all monomials with $N(m)=\chi\mod 2$ for $\chi<\chi_0$. The partial derivatives of $G^{\chi_0}$ are computed in terms of the ${\sf R_i}(G^{\chi_0-1})$ and ${\sf R_i}(G^{\chi_0-2})$ from Theorem~\ref{partial-int}. In particular, the expression for ${\sf R_4}G^{\chi_0}$ in terms of $G^{\chi_0-2}$ implies that all monomials with $s_6$ have $N(m)=\chi_0-2=\chi_0\mod2$, since $\sf R_0$. Proceeding similarly for all partial derivatives, we conclude that all monomials except for the ones with $s_4$ and $s_0$ (ambiguity) have $N(m)=\chi_0\mod2$. 

    To finish, we use that $G^{\chi_0}$ satisfies~\ref{eq:BPS-real}. This implies that the $q$-expansion has to have exponents $q^{d/2}$ with $d=\chi_0\mod2$. Since only $s_7,s_8$ and $s_4$ have $q$-expansions involving $q^{1/2}$, we conclude that the terms $q^{d/2}$ appearing in the expansion of a monomial have $d=N(m)\mod2$. Since all monomials except for the ambiguity satisfy $N(m)=\chi_0\mod2$, we conclude that the monomials in the ambiguity also have to satisfy $N(m)=\chi_0\mod2$.
\end{proof}
\begin{remark}
    The last step of the proof also implies that the minimum power of $s_4$ is five, since we need expansions in $q^{1/2}$.
\end{remark}

To obtain an ansatz for the ambiguity, we need a few hypotheses about the growth of it around the special points. The BPS expansion~\ref{eq:BPS-real} implies regularity at $z=0$. At the conifold point, we expect a pole of order $z^\chi$ and, at $z=\infty$, a pole of order at most $\frac\chi2$. 

\subsection{Assumptions on the Ambiguity}\label{sec:amb-assump}
We start by finding an ansatz for the ambiguity. We use the expected growth in the variable $z$ on the orbifold and on the conifold to be, respectively $\chi/2$ and $\chi$. This is an adaptation from the ansatz from~\cite{yamaguchi_topological_2004} used by~\cite{huang_topological_2009} to fix the ambiguities in the complex case. These conditions were also used in~\cite{alim_polynomial_2007} and~\cite{konishi_solutions_2007} as ansatz for the ambiguity. 

\begin{proposition}\label{amb-ansatz} Assuming that the order of the poles at the orbifold and conifold are at most $\chi/2$ and $\chi$ respectively, and that it is regular at large complex structure, the ambiguity $h^{\chi}$ can be written in the following form
    
\begin{equation}\label{eq:amb-ansatz-final}
h^{\chi}(s_0,s_4) \;=\; \frac{1}{s_0^{5\chi}}\!\!\sum_{\substack{0\le k\le 3\chi\\ k\equiv\chi\ (2)}}\!\!
  b_k\, s_4^{5k}\,\bigl(s_0^{10}-s_4^{10}\bigr)^{\frac{\chi-k}{2}} .
\end{equation}

\end{proposition}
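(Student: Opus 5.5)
We write the ambiguity in terms of a single variable, translate each growth assumption into a constraint on a rational function of that variable, and then change basis. By Theorem~\ref{thm:ambiguity}, the ambiguity is $s_0^{2\chi}h$, where $h\in\mathcal O_{\rT}$ depends only on $s_0,s_4$ and is homogeneous of degree zero for ${\sf R}_2$. Since $s_0$ and $s_4$ both have weight $\frac12$, and the only allowed denominators are powers of $s_0$, $s_4$ and $(s_0^{10}-s_4^{10})$, $h$ is a rational function of $v=s_4/s_0$ of the form
\[
h=\frac{P(v)}{v^{a}\,(1-v^{10})^{c}},
\]
with $P$ a polynomial. We will then pass to the variable $w=v^5$, for which $z=w^2/5^5$ and $x=\frac1{5^5z}-1=w^{-2}-1$.

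\textbf{Step 1: reduce to a function of $w$ with parity $\chi$.} We use Lemma~\ref{lem:amb-parity} together with its proof. Along the large complex structure expansion we have $s_0=5^{-1/2}+O(q)$ and $s_4=q^{1/10}+\dots$, so a monomial $v^j$ contributes $q^{j/10}$ at leading order. Condition~\eqref{eq:BPS-real} only permits exponents in $\frac12\mathbb Z$ with parity fixed by $\chi$. Hence only exponents $j=5k$ with $k\equiv\chi\pmod 2$ survive, and $h$ is a function of $w$ containing only powers $w^k$ with $k\equiv\chi$ (mod 2), exactly as in the lemma.

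\textbf{Step 2: translate the three growth assumptions.}
\begin{itemize}
\item Regularity at $z=0$, i.e.\ at $w=0$, forbids negative powers of $w$. So $h=R(w)/(1-w^2)^{c}$ with $R$ a polynomial.
\item At the conifold, $x=w^{-2}-1$ vanishes to first order in $1-w^2$. By~\eqref{mirror-coni}, $t_c$ also vanishes to first order there. So a pole of order at most $\chi$ forces $c\le\chi$, and we may take $c=\chi$ by absorbing factors of $(1-w^2)$ into $R$.
\item At the orbifold $z\to\infty$, the growth $|z|^{\chi/2}=5^{-5\chi/2}|w|^{\chi}$ gives $\deg R-2\chi\le\chi$, i.e.\ $\deg R\le 3\chi$.
\end{itemize}
The parity from Step~1 transfers to $R$, because $(1-w^2)^{\chi}$ is even. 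We conclude that $R$ lies in the space $V_\chi$ of polynomials of degree at most $3\chi$ containing only powers of parity $\chi$.

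\textbf{Step 3: change of basis.} We show that the polynomials $w^k(1-w^2)^{(3\chi-k)/2}$, for $0\le k\le 3\chi$ and $k\equiv\chi\pmod 2$, form a basis of $V_\chi$. Each one lies in $V_\chi$ and has lowest-order term $w^k$. The transition matrix to the monomial basis is therefore triangular with ones on the diagonal. The number of such $k$ equals $\dim V_\chi$, so they form a basis. Writing $R=\sum b_k w^k(1-w^2)^{(3\chi-k)/2}$ and dividing by $(1-w^2)^{\chi}$ gives
\[
h=\sum_k b_k\, w^k(1-w^2)^{(\chi-k)/2}.
\]
Substituting $w=s_4^5/s_0^5$ and clearing denominators yields exactly~\eqref{eq:amb-ansatz-final}.

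\textbf{Main obstacle.} The delicate point is the translation of pole orders into exponents. We must make sure that the pole order $\chi$ at the conifold, measured in the flat coordinate $t_c$, is the same as the order in $1-w^2$. This needs $t_c$ to be a uniformizer there, which holds by~\eqref{mirror-coni}. We must likewise make sure that the orbifold bound, stated in $z$, is applied to $h$ itself and not to the full $s_0^{2\chi}h$. This requires using the assumptions exactly in the normalization the paper adopts for the ambiguity.
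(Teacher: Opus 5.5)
Your proof is correct. It shares the paper's reduction but finishes with a different algebraic step.

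\textbf{The shared reduction.} You and the paper both use Lemma~\ref{lem:amb-parity} to make $h$ a function of $w=(s_4/s_0)^5$ (called $v$ in the paper's proof) containing only powers of parity $\chi$. Both then impose the three growth conditions.

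\textbf{Where the routes diverge.} The paper writes $h$ in partial fractions: a polynomial of degree at most $\chi$ (from the orbifold bound) plus a principal part $\sum_{\ell\le\chi}c_\ell(1-w^2)^{-\ell}$ (from the conifold bound). It re-indexes via $2j=k$, $2\ell=k-\chi$ and asserts that the two sums merge into~\eqref{eq:amb-ansatz-final}. You instead clear the denominator to get $R(w)/(1-w^2)^{\chi}$ with $\deg R\le 3\chi$ and parity $\chi$. You then show that the polynomials $w^k(1-w^2)^{(3\chi-k)/2}$ form a unitriangular basis of that space.

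\textbf{What each buys.} Your route is the more complete one, for two reasons. First, the merged sum is not a termwise union of the paper's two sums: the terms of~\eqref{eq:amb-ansatz-final} carry an extra factor $(1-w^2)^{(\chi-k)/2}$ for $k\le\chi$ and $w^k$ for $k>\chi$. What is really needed is an equality of spans, and that is exactly your triangularity argument. Second, for odd $\chi$ the paper's displayed principal part $\sum c_\ell(1-w^2)^{-\ell}$ has the wrong parity; it should read $\sum c_\ell\,w(1-w^2)^{-\ell}$. Your approach never meets this issue. In exchange, the paper's partial-fraction form shows directly that exactly $\chi$ coefficients govern the polar part at the conifold. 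This is the count later matched by the gap condition; in your basis these are the $b_k$ with $k>\chi$.

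\textbf{Two small tightenings.} In Step~1, do not argue only at leading order. Use that $v=\sqrt5\,z^{1/10}$ exactly and $z=q(1+O(q))$, so $v^j\in q^{j/10}\mathbb C[[q]]$. Then monomials with different residues of $j$ modulo $10$ cannot cancel. In Step~2, the bound $c\le\chi$ needs $R/(1-w^2)^c$ in lowest terms. You can arrange this because the parity of $R$ forces equal vanishing orders at $w=1$ and $w=-1$.
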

\begin{proof}
   By Lemma~\ref{lem:amb-parity}, we can assume that $h^{\chi}$ is a function of $v=(\frac{s_4}{s_0})^5$ and that only exponents with the same parity as $\chi$ appear in the expression. Using the growth condition at $z=\infty$ and $z=5^{-5}$, we can write, for $\chi$ even:
    \[
       \sum_{j=0}^{\chi/2}b_jv^{2j}+ \sum_{\ell=1}^{\chi}\frac{c_{\ell}}{(1-v^2)^{\ell}} =\sum_{k=0,\,k\text{ even}}^{\chi}b_kv^k+ \sum_{k=\chi+2}^{3\chi}c_{k}(1-v^2)^{\frac{\chi-k}2},
    \]
    afte writing $2\ell = k-\chi$ and $2j=k$. We can make both sums into one to get~\eqref{eq:amb-ansatz-final}. 
    
    For odd $\chi$, we do not have a term $v^0$. We can do the same but go until $(\chi+1)/2$. 
   \[
       \sum_{j=0}^{(\chi-1)/2}b_jv^{2j+1}+ \sum_{\ell=1}^{\chi}\frac{c_{\ell}}{(1-v^2)^{\ell}}=
       \sum_{k=1,\,k\text{ odd}}^{\chi}b_kv^{k}+ \sum_{k=\chi+2}^{3\chi}c_{k}(1-v^2)^{\frac{\chi-k}2}
    \]
    After writing $2\ell = k-\chi$ and $2j+1=k$. Again, we can put it together to obtain expression~\eqref{eq:amb-ansatz-final}.
\end{proof}

With the ansatz~\eqref{eq:amb-ansatz-final}, we can refine the conditions we have for the conifold and use some geometric information from the A-model to fix the coefficients $b_k$ for each $\chi$. There are three main properties the ambiguity $h^\chi(s_0,s_4)$ has to force $G^{\chi}$ to satisfy. Note that, since the properties are stated for $\mathcal G_A^{\chi}$ rather than $G^\chi$, we have to renormalize by multiplying by $(-5i)^\chi$ (the factor 5 relating $s_0$ and $e^{-K}$ and the complex unit that has to disappear).

\begin{enumerate}
    \item Gap Condition: The expansion of $(-5i)^{\chi}G^{\chi}$ around the conifold ($s_0=\mu s_4$, i.e., $z=5^{-5}$) is of the form 
    
\begin{equation}\label{eq:gap}
\Phi_\chi t_c^{-\chi} + O(1),\quad
{\Phi}_\chi = (-5)^{-\chi/2}\Big(c_\chi+r_\chi\Big),
\end{equation}

\begin{equation}\label{eq:KR}
c_\chi=\frac{B_{\chi+2}}{2\chi(\chi+2)},
\qquad
r_\chi=\frac{(\chi+2)E_{\chi}-2\left(2^{\chi+2}-1\right)B_{\chi+2}}
                     {2^{\chi+3}\,\chi(\chi+2)},
\end{equation}
 and $B_\chi$ and $E_\chi$ are Bernoulli numbers and Euler numbers. The contribution $\mathcal K$ is the contribution from the complex case (closed/orientable), while the contribution $r_{\chi}$ comes from the real (open/non orientable)
    \item The expansion of $(-5i)^{\chi}G^{\chi}$ around the holomorphic limit ($s_4=0$, $z=0$) has constant term equals zero for odd $\chi$ and given by 
    
\begin{equation}\label{eq:const-map}
m_\chi \;=\; \frac{(-1)^{\chi/2}\,B_{\chi+2}B_{\chi}}{4\,\chi(\chi+2)\,\chi!}\,e(X)
\;=\;\frac{(-1)^{\chi/2+1}\,50\,B_{\chi+2}B_{\chi}}{\chi(\chi+2)\,\chi!}\ ,
\ \ e(X)=-200 .
\end{equation}
for even $\chi$. This corresponds to the contribution of constant maps, which only appear at the closed amplitudes. Heuristically, the constant term should be zero because the contribution of constant maps is proportional to the Euler characteristic, which is zero for a real threefold.
    \item Vanishing of Complex Invariants: real Gopakumar-Vafa invariants are expected to be zero whenever their complex counterparts are zero. The idea is if there are no complex curves, there can be no real curves. We see this being used in~\cite{walcher_evidence_2009} and~\cite{krefl_real_2008}. In~\cite{piazzalunga_m-theory_2014}, the authors give an interpretation of this statement in terms of the BPS states. We use this specially for low degree: for a fixed genus, there is a minimum degree for which we have curves in $X$. We can cite, in particular, the Castelnuovo bound~\cite{liu_castelnuovo_2022}. In~\cite{huang_topological_2009}, those bounds were used to obtain extra conditions to fix the ambiguities for the complex invariants. 
    \begin{equation}\label{eq:vanishing}
        n^{\mathbb C}_{g,d}=0\implies n^{\mathbb R}_{g,d}=0
    \end{equation}

    We also have that real invariants agree with complex invariants modulo 2\cite{walcher_evidence_2009,krefl_real_2008}. This is not used to fix the ambiguities, but we checked that the numbers we obtained satisfy that condition. The

\end{enumerate}
\begin{remark}\label{rem:KW}
    We should explain where the $r_\chi$ part of the gap condition came from. In~\cite{krefl_real_2009}, values werre proposed for lower $\chi$, after fixing lower genus ambiguities for local $\mathbb P^2$. The full formula was proposed in~\cite[Eq. (5.6)]{krefl_extended_2011}. 
    
    Comparing the formulas, there are two details in our computation that the reader should pay attention to: the first is that we added the normalization $(-5)^{-\chi/2}$ that correspond to a change of coordinates $t_c\to i\sqrt{5} t_c$.
     The second is the division by two, coming from the choice of normalization in equation~\eqref{eq:G-decomp-norm}.
\end{remark}
\subsection{Algorithm and Computation}\label{sec:amb-alg-com}
The expression~\eqref{eq:amb-ansatz-final} gives us $\left\lfloor\frac{3\chi}2\right\rfloor+1$ coefficients to fix. The gap condition fixes $\chi$ coefficients and the constant term (which appears only for $\chi$ even), fixes one more. We have $\frac\chi2$ coefficients left for even $\chi$ and $\frac{\chi+1}2$ left for odd $\chi$. The vanishing of the complex invariants allows us to fix further coefficients, assuming all conditions are independent.

The important insight here is that, since we have only half the coefficients $(\chi\equiv_2k)$ on the ambiguity, we do not have as many invariants as it could seem at first. We can fix the ambiguity until $\chi=4$ and, once we have the GV invariant $n_{6,5}^{\mathbb R}$, until $\chi=7$.  See table~\ref{tab:count-quintic}.

\begin{table}[ht]
\centering
\small
\resizebox{0.8\textwidth}{!}{
\begin{tabular}{cccccccl}
\toprule
$\chi$ & $g$ & $\#\,b_k$ & conifold & const.\ map & degrees $d$ with $n^{\mathbb C}_{g,d}=0$ & $\#$ & remaining \\
\midrule
$1$ & $2$ & $2$ & $1$ & -- & $1,3$ & $2$ & $-1$ \\
$2$ & $3$ & $4$ & $2$ & $1$ & $2$ & $1$ & $0$ \\
$3$ & $4$ & $5$ & $3$ & -- & $1,3$ & $2$ & $0$ \\
$4$ & $5$ & $7$ & $4$ & $1$ & $2,4$ & $2$ & $0$ \\
$5$ & $6$ & $8$ & $5$ & -- & $1,3$ & $2$ & $1$: $n^{\mathbb R}_{6,5}$ \\
$6$ & $7$ & $10$ & $6$ & $1$ & $2,4,6$ & $3$ & $0$ \\
$7$ & $8$ & $11$ & $7$ & -- & $1,3,5,7$ & $4$ & $0$ \\
$8$ & $9$ & $13$ & $8$ & $1$ & $2,4,6$ & $3$ & $1$ \\
$9$ & $10$ & $14$ & $9$ & -- & $1,3,5,7$ & $4$ & $1$ \\
$10$ & $11$ & $16$ & $10$ & $1$ & $2,4,6,8$ & $4$ & $1$ \\
$11$ & $12$ & $17$ & $11$ & -- & $1,3,5,7$ & $4$ & $2$ \\
$12$ & $13$ & $19$ & $12$ & $1$ & $2,4,6,8$ & $4$ & $2$ \\
\bottomrule
\end{tabular}}
\caption{Conditions on the ambiguity~\eqref{eq:amb-ansatz-final} of the mirror quintic, $g=\chi+1$.}%
\label{tab:count-quintic}
\end{table}

The algorithm is the following, basically the same as the Huang--Klemm--Quackenbush \cite{huang_topological_2009} for the closed case. 

1. Solve the algebraic equation from Theorem~\ref{alg-HAE-GMCD} by partial integration starting from the lower $\chi$ generating functions.

2. Add the ambiguity $s_0^{
2\chi}h^{\chi}$ following the equation in Proposition~\ref{amb-ansatz}. 

3. Consider the expansion around the conifold. Each variable $s_i$ has an expansion around the conifold that can be calculated from the vector field ${\sf R}_0$ and initial conditions. Use the gap condition to compute $\chi$ terms $b_k$.

4. Compare the expansion in the holomorphic limit with the BPS expansion. The vanishing of the complex invariants~\eqref{eq:vanishing} gives a way of computing more coefficients. 

We write the expression of $G^{\chi}$ in terms of $s_i$ for $\chi=1$ and the BPS invariants for genus $g=2$ to $g=5$. We reproduce Walcher's results for $\chi=1$ and, for higher genus, we obtain integrality of the BPS invariants and they agree with the complex ones modulo 2, which is evidence that they are the correct numbers.

\subsection{Beyond \texorpdfstring{$\chi=4$}{4} and geometric hints}\label{sec:amb-geometric}
For $\chi=5$, we are short one condition to compute all the coefficients, and that is exactly the invariant $n_{6,5}^{\mathbb R}$. To compute the invariant $n^{\mathbb R}_{6,5}$, we proceed as in the complex case, where one identifies the space of genus 6 curves of degree 5 with the space of plane sections of $X$, which is simply the Grassmannian $G(3,5)$ of $\mathbb P^2$'s in $\mathbb P^4$ and compute its Euler characteristic (Donaldson-Thomas invariant). The real version should be simply the Euler characteristic of the real Grassmannian, which is 2. Similar ideas were considered in~\cite{espreafico_motivic_2024}, where arithmetic versions of DT invariants were studied.

The sign, however is not clear to us. Following the computation of~\cite{espreafico_motivic_2024}, the sign should be $(-1)^{\frac{\dim G(3,5)}2}=-1$. It is important to mention that neither the integrality property nor the fact that it has to be even (since the complex number is 10) can fix the invariant. Any even integer would work. In Appendix~\ref{app:tables}, we list the invariants for $\chi\le 4$ and, for both choices of sign, for $\chi=5,6,7$. 

Some other numbers can be geometrically justified. We take, for instance $n_{3,4}^{\R} = 15$. Any curve of degree four and genus three spans a plane $P$, since $g=(4-1)(4-2)/2 = 3$. If the curve is on the quintic threefold $X$, the intersection $P\cap X$ is the curve and a line (since the degree is five). Therefore, for each line on the quintic threefold, we can consider the space of all planes $P$ through it. The intersection $X\cap P$ is a plane curve of degree 4 in $P$ intersecting a line. The space of planes containing a line is simply the Grassmannian $G(1,3)\cong\mathbb P^2$. Therefore, the Gopakumar-Vafa invariant should be the Euler characteristic of the space of such curves, which is $\mathbb P^2$ times the space of lines. It will give $\chi(\text{ lines in $X$})\chi(\mathbb P^2)=2875\cdot 3 = 8625$, over $\mathbb C$. Taking the real Euler characteristic, we count lines with signs and use $\chi(\mathbb P^2)=1$ to obtain $15\cdot 1= 15$. Note that the sign convention mentioned above does not apply in this case.

\subsection{Computations for \texorpdfstring{$\chi=1$}{1}}\label{sec:chi12}

In this section we carry out the algorithm above for $\chi=1$. We solve the anomaly equation, fix the ambiguity and compare it with the BPS expansion.

The same process can be repeated for $\chi=2$, but the equations get very large quickly, so we opted to omit the explicit computation.

\subsubsection*{The case $\chi=1$}
For $\chi=1$, Theorem~\ref{alg-HAE-GMCD} reads
\begin{align}
    {\sf R}_1G^1={\sf R}_3G^1&=0, & {\sf R}_2G^1&=G^1,\\
    {\sf R}_4G^1&=\frac1{5^8}{\sf R}_0^2G^{-1}=\frac{\sf F}{2\cdot 5^8}, & {\sf R}_7G^1&=-\frac{1}{5^8}{\sf R}_0G^{0}.
\end{align}
Using the expressions for ${\sf R}_0G^0$ and ${\sf R}_0^2G^{-1}$ given in Theorem~\ref{alg-HAE-GMCD} and Theorem~\ref{partial-int}, we can calculate all derivatives $\partial_{s_i}G^1$ for $i\ne0,4$. Integrating directly, we get
\begin{equation}\label{eq:G1}
G^1=\frac{1}{5^8}\left(-s_7\,{\sf R}_0G^0+\frac{\dis^2s_7^3}{6s_5^3}-\frac{25}{6}\,s_8\right)+\frac{b_1s_4^5}{s_0^3}+\frac{b_3s_4^{15}}{s_0^3\,\dis},
\end{equation}
that is,
\begin{multline*}
G^1=\frac{1}{48\cdot 5^8\,s_0^2s_5^3}\Big(\big(5^4(1022s_0^{10}+3s_4^{10})+200s_0^2s_3\big)s_5^2s_7-15000\,\dis\, s_0^2s_5s_6s_7\\
-4\,\dis^2s_0^2s_7^3-200s_0^2s_5^3s_8\Big)+\frac{b_1s_4^5}{s_0^3}+\frac{b_3s_4^{15}}{s_0^3\,\dis}.
\end{multline*}
The last two terms are the ambiguity of Proposition~\ref{amb-ansatz} ($k=1,3$).

To compute $b_1$ and $b_3$, we expand around the conifold by solving ${\sf R}_0=5\frac{d}{dt_c}$ with the conifold initial conditions of Proposition~\ref{prop:expansions}. Replacing the $s_i$ by these solutions, we get
\begin{equation}
(-5i)\,G^1=-\frac{i\,b_3}{t_c}-i\Big(b_1-\frac45 b_3\Big)+O(t_c).
\end{equation}
Since $\chi$ is odd, the gap condition says that $G^1$ is regular at the conifold, so $b_3=0$. Around $z=0$, solving ${\sf R}_0=5q\frac{d}{dq}$ with the initial conditions of Proposition~\ref{prop:expansions} gives
\begin{equation}
\begin{aligned}
(-5i)\,G^1={}&-\frac{25i}{16}\big(16\sqrt5\,b_1+3\big)q^{1/2}\\
&+\frac{5i}{16}\big(21200\sqrt5\,b_1-250000\sqrt5\,b_3+4609\big)q^{3/2}+O(q^{5/2}).
\end{aligned}
\end{equation}
Comparing with the BPS expansion~\ref{eq:BPS-real}, we have:
\begin{equation}
\begin{aligned}
(-5i)\,G^1&=-i\sum_{d\ {\rm odd}}\ \sum_{k\ {\rm odd}}\Big(n^{\mathbb R}_{2,d}-\frac{1}{24}\,n^{\mathbb R}_{0,d}\Big)q^{kd/2}\\
&=i\Big(\frac58-n^{\mathbb R}_{2,1}\Big)q^{1/2}+i\Big(\frac{65}{2}-n^{\mathbb R}_{2,1}-n^{\mathbb R}_{2,3}\Big)q^{3/2}+O(q^{5/2}),
\end{aligned}
\end{equation}
where we used the disk invariants $n^{\mathbb R}_{0,1}=15$ and $n^{\mathbb R}_{0,3}=765$. The next condition comes from the vanishing of the complex invariants: $n^{\mathbb C}_{2,1}=0$ gives $n^{\mathbb R}_{2,1}=0$. This is enough to fix

\begin{equation}\label{eq:b-chi1}
b_1=-\frac{17}{80\sqrt5},\qquad b_3=0 .
\end{equation}

The coefficient of $q^{3/2}$ then gives $n^{\mathbb R}_{2,3}=0$, which is exactly what the vanishing of the complex invariants predicts, since $n^{\mathbb C}_{2,3}=0$; this is a first consistency check. The higher coefficients give

\begin{equation}\label{eq:nR-chi1}
\begin{gathered}
n^{\mathbb R}_{2,5}=-55640,\qquad n^{\mathbb R}_{2,7}=-159440655,\qquad n^{\mathbb R}_{2,9}=-387012696805,\\
n^{\mathbb R}_{2,11}=-878665820903170,\qquad n^{\mathbb R}_{2,13}=-1923803069641669385,\ \dots
\end{gathered}
\end{equation}

Notice that we get the exact same values as~\cite{walcher_evidence_2009}.

\section{Conclusions and Generalizations}\label{sec:conclusions}

Although our work focused on the quintic, the same methods apply to other models. We implemented the same computation for the bicubic, as it was the other example in~\cite{walcher_evidence_2009}. The only thing that is lacking for the bicubic is an algebraic counterpart of the normal function (the pair of conics for the quintic), so we can't define a relative cohomology as we did for the quintic. This can be remedied by simply working in an abstract mixed Hodge structure of dimension five, where four of the generators form $H^3_{dR}(Y_z)$ and the extra generator correspond to the normal function is on $F^1$ and $W_2$, as in the quintic case. 

\subsection{GMCD and Anomaly Equations for the Bicubic}

The bicubic in $\mathbb P^5$ is the complete intersection of two cubic equations. It is a complete intersection Calabi-Yau threefold and its mirror was studied in~\cite{libgober_lines_1993}. It is given by the resolution of singularities of the quotient of the intersection of two families of cubic hypersurfaces in $\mathbb P^5$:
\begin{align}
    x_1^3+x_2^3+x_3^3-3\lambda x_4x_5x_6 = 0\\
    x_4^3+x_5^3+x_6^3-3\lambda x_1x_2x_3 = 0
\end{align}
by the group $G_{81}\subset\Z_9\times\Z_3^2\times\Z_3^2$ of elements $(\mu,\alpha,\beta,\gamma,\delta)$ with $3\mu=\alpha+\beta=\gamma+\delta$ that acts as $\left(\zeta_3^{\alpha}\cdot \zeta_9^{\mu}\cdot x_1,\ \zeta_3^{\beta}\cdot \zeta_9^{\mu}\cdot x_2,\ \zeta_9^{\mu}\cdot x_3,\ \zeta_3^{-\delta}\cdot \zeta_9^{-\mu}\cdot x_4,\ \zeta_3^{-\epsilon}\cdot \zeta_9^{-\mu}\cdot x_5,\ \zeta_9^{-\mu}\cdot x_6\right)$, where $\zeta_k^k=1$.
\smallskip

The periods satisfy a Picard-Fuchs equation 
\begin{equation}\label{eq:PF-bicubic}
  \mathcal D=\theta^4-9z\,(3\theta+1)^2(3\theta+2)^2
  =\theta^4-3^6 z\left(\theta+\tfrac13\right)^2\left(\theta+\tfrac23\right)^2,
  \qquad \theta=z\frac{d}{dz},
\end{equation}
and the extra real period $\tau$ satisfy:
\begin{equation}\label{eq:ext-PF-bicubic}
    \mathcal D \tau = \frac 98\sqrt{z}
\end{equation}

Solving it gives a basis of four closed periods, corresponding to a basis of $H^3_{\rm dR}(Y_z)$ and one extra period. We do not write the expressions of those solutions, but they can be found together with the codes accompanying this paper. We just write the mirror map and the expression for $\tau/\varpi_0$, which gives the real Gromov-Witten invariants of genus zero after dividing by two and removing multiple coverings.
\begin{align}\label{eq:mirror-bicubic}
  q(z)&=z+180\,z^2+55890\,z^3+22089840\,z^4+\dots,\\
  \frac{\tau}{\varpi_0}&=18\,q^{1/2}+182\,q^{3/2}+\tfrac{787968}{25}\,q^{5/2}+\tfrac{323202744}{49}\,q^{7/2}+\dots
\end{align}

\begin{definition}\label{def:enhanced-bicubic}
    The space of relatively enhanced mirror bicubics is the space of pairs $(Y_z,\alpha=\{\alpha_0,\alpha_1,\dots,\alpha_4\})$, where $\alpha$ form a basis of $V=\langle\alpha_0\rangle\oplus H^3(Y_z)$ for which $\alpha_i$, $i=1,\dots4$ respect the Hodge structure and have constant intersection product. We define a mixed Hodge structure on $V$ by making $\alpha_0$ generate $W_2$ and $\alpha_0\in F^1$.
\end{definition}

Such space is going to have dimension 9, as in the quintic case and the Gauss-Manin connection can be computed in exactly the same manner, using that it should commute with the normal function associated to the extra period (since it corresponds to the integral over a holomorphic curve). The theorems below are analogous to the ones for the quintic.

\begin{theorem}\label{thm:bicubic-moduli}
    The moduli space of relatively enhanced mirror bicubics is 9 dimensional and given by
\begin{equation}\label{eq:S-bicubic}
\rT={\rm Spec}\,\mathbb C\left[s_0,s_1,\dots,s_8,\frac{1}{s_0\,s_4\,s_5\left(s_0^{6}-s_4^{6}\right)}\right],\qquad z=\frac{s_4^{6}}{3^{6}s_0^{6}}.
\end{equation}
\end{theorem}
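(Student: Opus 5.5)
The plan is to imitate the construction of $\rT$ for the quintic (Proposition~\ref{prop:R0}, following \cite{espreafico_gauss-manin_2026,movasati_modular-type_2015}). We first build explicit algebraic coordinates and then identify the moduli space with the quotient of a simple space by the group $\rG$. Concretely, we fix a reference basis $\omega=(\omega_0,\omega_1,\dots,\omega_4)$ of $V$ over the base $\mathcal M=\mathbb P^1\setminus\{0,3^{-6},\infty\}$. Here $\omega_1=\Omega$ is the holomorphic form of the mirror bicubic family, $\omega_{j+1}=\nabla_{\partial_z}^j\Omega$ for $j\le 3$, and $\omega_0$ is the generator of $W_2$ normalized by $\int_{\delta_0}\omega_0=1$. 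In this basis, the Picard--Fuchs operator~\eqref{eq:PF-bicubic} and the inhomogeneous equation~\eqref{eq:ext-PF-bicubic} give the Gauss--Manin connection. The intersection matrix of $\omega_1,\dots,\omega_4$ is computed from Griffiths transversality and the Yukawa coupling $C_{zzz}\propto \frac{1}{z^3(1-3^6z)}$. By the same residue/Picard--Fuchs argument as for the quintic, its entries are rational in $z$ with poles only at $0$ and $3^{-6}$.

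Next, an arbitrary point of the moduli space is written as $(Y_z,\alpha)$ with $\alpha=S\,\omega$. The matrix $S$ is block upper triangular, reflecting the filtrations: $\alpha_i\in F^{4-i}$, $\alpha_0\in W_2$ a multiple of $\omega_0$, and $\alpha_i\in W_3$ otherwise. The conditions $\int_{\delta_0}\alpha_0=1$ and $[\langle\alpha_i,\alpha_j\rangle]=\Phi$ are polynomial equations in the entries of $S$. We count dimensions as follows. The group $\rG$ has dimension $2+(10-4)=8$ (entries $h_1,h_2$ and the ten $g_{ij}$ subject to four relations), and it acts freely, since it changes basis while preserving all conditions. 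The base is one-dimensional, so the total dimension is $9$.

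To obtain the explicit presentation~\eqref{eq:S-bicubic}, we choose nine entries of $S$ (or simple rational combinations of them) as $s_0,\dots,s_8$, exactly mirroring the quintic. We let $s_0$ be the coefficient normalizing $\alpha_1=s_0^{\,?}\,\cdots\,\Omega$, with $s_0^2$ trivializing $\mathcal L$, and we introduce $s_4$ through $z=\frac{s_4^6}{3^6s_0^6}$, i.e. $s_4/s_0$ is a sixth-root-type uniformizer, in analogy with $s_4^{10}$ for the quintic. We let $s_5$ be the determinant-type entry appearing in the $F^2$ part, $s_1,s_2,s_3,s_6$ the remaining free Hodge entries, and $s_7,s_8$ the $W_2$-extension entries $h_1,h_2$. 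We then solve the constraints $\langle\alpha_i,\alpha_j\rangle=\Phi_{ij}$ for the remaining entries of $S$. The key check is that they become polynomial in $s_0,\dots,s_8$ and $1/(s_0s_4s_5(s_0^6-s_4^6))$. The factors that must be inverted are as follows: $s_0$ and $s_5$ because diagonal entries must be nonzero for a basis; $s_4$ because $z\neq0$; and $s_0^6-s_4^6$ because $z\neq3^{-6}$, this last one coming from the Yukawa pole.

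Finally, we show the resulting map from $\mathrm{Spec}$ of this ring to the moduli space is bijective. Injectivity follows because $S$ is recovered from the $s_i$. Surjectivity follows because, given any enhanced point, $z$ determines $(s_0,s_4)$ up to the $\mu_6$ choice absorbed in the parametrization, and the triangular solving procedure produces the rest. The main obstacle is the explicit triangular elimination showing that no further denominators appear, especially when solving for the entries paired through the intersection form with the rational-in-$z$ pairing matrix. We would handle it by computer algebra, as in \cite{movasati_modular-type_2015}, verifying that the Jacobian of the $s_i$ against the free entries of $S$ is a unit in the localized ring.
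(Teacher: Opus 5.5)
The paper gives no separate proof of this statement. It is asserted by analogy with the quintic construction of \cite{espreafico_gauss-manin_2026,movasati_modular-type_2015}, and your outline (reference basis $\omega$, $\alpha=S\omega$, fibres that are $\rG$-torsors, $\dim=1+8=9$) makes that analogy explicit. The dimension count is fine. The step you call the main obstacle is not really one: $\langle\omega_1,\omega_4\rangle$ and $\langle\omega_2,\omega_3\rangle$ are, up to constants, $\pm C_{zzz}$, which is a unit on $\mathcal M$. Normalizing against them therefore gives a global compatible basis, so the torsor is trivial. Since $\rG\cong\mathbb G_m^2\times\mathbb A^6$ as a variety, no denominators beyond the torus coordinates and the units of the base can occur.

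\textbf{The gap is the final bijectivity step.}
\begin{itemize}
\item You note yourself that $z$ determines $s_4/s_0$ only up to $\mu_6$, which already contradicts injectivity.
\item There is a second ambiguity. ${\sf R}_2$ generates $u\mapsto\exp(uX_2)\cdot p$ as in Theorem~\ref{thm:VX}, and it has weight $\tfrac12$ on $s_0,s_4$ and integer weights on the other $s_i$. Its time-$2\pi i$ flow is therefore $(s_0,s_4)\mapsto(-s_0,-s_4)$, while $\exp(2\pi i X_2)=I$.
\item Consequently the natural map from $\mathrm{Spec}$ of the ring in~\eqref{eq:S-bicubic} to the space of pairs $(Y_z,\alpha)$ over the $z$-line is $12$-to-$1$ (six values of $s_4/s_0$, two of $s_0$).
\item The two spaces are not even abstractly isomorphic. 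Modulo constants, the unit group of $\mathcal M\times\rG$ has rank $2+2=4$. The ring in~\eqref{eq:S-bicubic} has nine independent units: $s_0$, $s_4$, $s_5$ and the six factors $s_0-\zeta s_4$ with $\zeta^6=1$.
\end{itemize}
Your phrase ``absorbed in the parametrization'' hides exactly the point where the argument fails.

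To get a true statement you must change the moduli problem. Work over the $6$-fold cover $w^6=3^6z$, $w=s_4/s_0$, where the unit ranks do match ($7+2=9$). This cover is forced anyway. The $\omega_0$-component of the connection comes from $\mathcal D\tau=\frac98\sqrt z$ and is regular only where $\sqrt z=w^3/27$ is, so your connection ``over $\mathcal M$'' is not algebraic. On that cover you must then either add a square root of the normalization of $\alpha_1$ to the data, or state the result as a finite \'etale double cover. The paper is itself loose here (for the quintic it calls the orbit space a $10$-fold cover of $\mathcal M$), but a proof has to make this choice explicit.

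You also still have to pin down the coordinates you left as $s_0^{\,?}$. The ${\sf R}_1,{\sf R}_2$ weights dictate them: for instance, $\alpha_1$ must be $s_0^{-2}\phi(s_4/s_0)$ times a fixed holomorphic form, matching $e^{-K}=s_0^2$ in Table~\ref{tab:dictionary-bicubic}.
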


\begin{theorem}\label{thm:bicubic-R0}
    There are unique $\sf R_0$, $\sf Y$ and $\sf F$ such that equation~\ref{eq:GM-R0} is satisfied. Those are given below.
{\footnotesize
\begin{equation*}\label{eq:R0-bicubic}
{\sf R}_0:\left\{\begin{aligned}
\dot s_0 &= \frac{s_0\left(2\cdot3^{4}s_0^{8}-3^{4}s_0^{2}s_4^{6}+s_3\right)}{2s_5}\\
\dot s_1 &= \frac{-3^{6}s_0^{12}+5\cdot3^{3}s_0^{8}s_1+3^{6}s_0^{6}s_4^{6}-2\cdot3^{3}s_0^{2}s_1s_4^{6}+s_1s_3}{s_5}\\
\dot s_2 &= \frac{-5\cdot3^{7}s_0^{14}-3^{3}s_0^{10}s_1+10\cdot3^{3}s_0^{8}s_2+5\cdot3^{7}s_0^{8}s_4^{6}+3^{3}s_0^{4}s_1s_4^{6}-4\cdot3^{3}s_0^{2}s_2s_4^{6}+2s_2s_3}{s_5}\\
\dot s_3 &= \frac{-25\cdot3^{6}s_0^{16}-3^{3}s_0^{10}s_2+5\cdot3^{4}s_0^{8}s_3+3^{9}s_0^{10}s_4^{6}+3^{3}s_0^{4}s_2s_4^{6}-2\cdot3^{4}s_0^{2}s_3s_4^{6}-2\cdot3^{6}s_0^{4}s_4^{12}+3s_3^{2}}{s_5}\\
\dot s_4 &= \frac{s_4\left(5\cdot3^{3}s_0^{8}-2\cdot3^{3}s_0^{2}s_4^{6}+s_3\right)}{2s_5}\\
\dot s_5 &= \frac{5\cdot3^{4}s_0^{8}s_5-2\cdot3^{4}s_0^{2}s_4^{6}s_5+2s_3s_5-3^{3}s_0^{4}\left(s_0^{6}-s_4^{6}\right)s_6}{s_5}\\
\dot s_6 &= \frac{-5\cdot3^{3}s_0^{6}s_5-2s_2s_5+5\cdot3^{4}s_0^{8}s_6-2\cdot3^{4}s_0^{2}s_4^{6}s_6+3s_3s_6}{s_5}\\
\dot s_7 &= -s_8\\
\dot s_8 &= \frac{3^{9}s_0s_4^{3}\left(s_0^{6}-s_4^{6}\right)}{8s_5}
\end{aligned}\right.
\end{equation*}
}
\vspace{-0.5cm}
\begin{equation}\label{eq:YF-bicubic}
{\sf Y}=\frac{3^{6}s_0^{8}\left(s_0^{6}-s_4^{6}\right)^{2}}{s_5^{3}},\quad {\sf F}=-{\sf Y}s_7
\end{equation}
\end{theorem}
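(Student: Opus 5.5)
The plan is to mimic the proof of Proposition~\ref{prop:R0} for the quintic (from~\cite{espreafico_gauss-manin_2026}), working with the period language of Section~\ref{sec: periods}. First I will establish uniqueness. Suppose two vector fields $\sf R_0,\sf R_0'$ with functions $\sf Y,\sf F,\sf Y',\sf F'$ both satisfy~\eqref{eq:GM-R0}. Their difference $V$ has $\nabla_V$ with nonzero entries only in positions $(2,0)$ and $(2,3)$. By Griffiths transversality and the constant intersection form, the projection of $V$ to the base must vanish, because the entry $(1,2)$ equal to $1$ is what measures the derivative in $z$. So $V$ is vertical, i.e.\ tangent to the $\rG$-orbits. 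Theorem~\ref{thm:VX} applies verbatim to the abstract mixed Hodge structure $V=\langle\alpha_0\rangle\oplus H^3$. By that theorem, $\nabla_V=X^{\rm tr}$ for a unique $X\in\mathfrak h$. The $(2,0)$ and $(2,3)$ entries of $X^{\rm tr}$ correspond to the entries $h_1$ (the $X_7$ direction) and $g_{23}$ (the $X_4$ direction), so $V=a{\sf R}_4+b{\sf R}_7$. Uniqueness is therefore only up to such a term, and it must be fixed by the normalization built into~\eqref{eq:GM-R0}: the first row vanishes, the $(2,2)$ entry is $0$, and the $(3,4)$ entry is $-1$. I would check that these normalizations exclude the ${\sf R}_4$ and ${\sf R}_7$ ambiguity once $\sf Y$ and $\sf F$ are treated as part of the data, i.e.\ the pair $({\sf R}_0,{\sf Y},{\sf F})$ is unique. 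This is exactly how the quintic statement is read.

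For existence and the explicit formulas, I will use coordinates on $\rT$ as in~\cite{movasati_modular-type_2015}. Fix the Griffiths--Dwork basis $\eta_i=\theta^{i-1}\Omega$ of $H^3(Y_z)$ together with the extra class $\eta_0$ attached to the normal function. The Gauss--Manin connection in this basis is the companion matrix of~\eqref{eq:PF-bicubic}. The $\alpha_0$ row comes from the inhomogeneous equation~\eqref{eq:ext-PF-bicubic}: the normal function pairs with $\Omega$ to give $\tau$, so $\nabla\eta_0=0$, while $\theta^4\Omega$ picks up the source term $\tfrac98\sqrt z$ times $\eta_0$. A general point of $\rT$ is then $\alpha=S\eta$, with $S$ a matrix in the Borel-type group, parametrized by $s_0,\dots,s_8$ subject to the intersection constraints. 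The constraints use the bicubic Yukawa coupling $\langle\eta_1,\eta_4\rangle\propto 1/(1-3^6z)$. The choice $z=s_4^6/(3^6s_0^6)$ encodes the extra $\sqrt z$ in $s_4^3/s_0^3$, exactly as $s_4^5/s_0^5$ did for the quintic.

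Then $\nabla\alpha=(dS\,S^{-1}+S A S^{-1})\alpha$. Imposing that this equals the matrix~\eqref{eq:GM-R0} gives a linear system for $\dot s_i$, together with the values of $\sf Y$ and $\sf F$. Solving it yields the listed formulas, and one checks that the inverted factors are only $s_0,s_4,s_5,(s_0^6-s_4^6)$, which confirms Theorem~\ref{thm:bicubic-moduli}. The relation ${\sf F}=-{\sf Y}s_7$ arises because $s_7$ is the coefficient of $\eta_0$ in $\alpha_3$.

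The main obstacle is choosing the parametrization $S$, including the weights of $s_0$ and $s_4$, so that the solution is polynomial with exactly the stated denominators and matches the normalization producing the quoted coefficients such as $3^9/8$. I would first copy the quintic ansatz with $5\mapsto 3$, $10\mapsto 6$, and adjust the powers of $s_0$ via the ${\sf R}_2$-homogeneity.
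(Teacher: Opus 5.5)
\textbf{The uniqueness step fails as written.} You identify the $(2,0)$ and $(2,3)$ entries of $X^{\rm tr}$ with $h_1$ and $g_{23}$. In fact they are $X_{02}$ and $X_{32}$, and both vanish for every $X\in\mathfrak h$.

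In the paper's convention $\nabla\alpha=A\alpha$ (row $i$ is $\nabla\alpha_i$), $X_7^{\rm tr}$ and $X_4^{\rm tr}$ have their nonzero entry at $(3,0)$ and $(3,2)$. So ${\sf R}_7$ and ${\sf R}_4$ add $\alpha_0$, resp.\ $\alpha_2$, to $\nabla\alpha_3$, not to $\nabla\alpha_2$. Two checks confirm this:
\begin{itemize}
\item A flatness computation reproduces the bracket $[{\sf R}_0,{\sf R}_4]={\sf Y}{\sf R}_1+{\sf F}{\sf R}_7$ of Table~\ref{mult-table-lie} only with this reading.
\item Your own explanation of ${\sf F}=-{\sf Y}s_7$ takes $s_7$ to be the $\eta_0$-coefficient of $\alpha_3$, which is exactly ${\sf R}_7$ acting on $\alpha_3$.
\end{itemize}

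Under your reading the uniqueness claim would actually be false. Adding $a{\sf R}_4+b{\sf R}_7$ would only replace $({\sf Y},{\sf F})$ by $({\sf Y}+a,{\sf F}+b)$, and the result would still have the form \eqref{eq:GM-R0}. None of the normalizations you propose (first row, the $(2,2)$ and $(3,4)$ entries) touches this. So the step ``I would check that these normalizations exclude the ${\sf R}_4,{\sf R}_7$ ambiguity'' cannot be carried out.

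The correct argument is shorter, and your framework already gives it:
\begin{itemize}
\item Once $V={\sf R}_0-{\sf R}_0'$ is shown to be vertical, $\nabla_V=X^{\rm tr}$ with $X\in\mathfrak h$.
\item Row $2$ of $X^{\rm tr}$ is $(0,g_{12},g_{22},0,0)$, i.e.\ $\nabla_V\alpha_2\in\langle\alpha_1,\alpha_2\rangle$. This reflects the fact that vertical motion keeps $\alpha_2$ inside $F^2$ and fixes $\alpha_0$.
\item $\nabla_V$ must also equal the matrix whose only entries are ${\sf F}-{\sf F}'$ at $(2,0)$ and ${\sf Y}-{\sf Y}'$ at $(2,3)$.
\item Comparing the two forces ${\sf F}={\sf F}'$, ${\sf Y}={\sf Y}'$ and $X=0$, hence $V=0$ by the uniqueness part of Theorem~\ref{thm:VX}.
\end{itemize}
Your verticality step (vanishing of the $(1,2)$ entry) is fine as it stands.

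\textbf{Existence matches the paper's intended route.} The paper gives no separate proof. It only says the Gauss--Manin connection is computed ``in exactly the same manner'' as for the quintic, using compatibility of $\nabla$ with integration over the extra cycle. That is precisely your plan: take the companion matrix of \eqref{eq:PF-bicubic}, obtain the $\alpha_0$-row from the source term of \eqref{eq:ext-PF-bicubic}, and solve for the $\dot s_i$ in a chart $\alpha=S\eta$.

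As you acknowledge, the displayed coefficients are only meaningful once that chart $(s_0,\dots,s_8)$ is fixed. Neither you nor the paper writes it down, so verifying the stated formulas remains a computation to be carried out in the chosen coordinates, not something your proposal establishes.
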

The solutions to $\sf R_0$ for specific initial conditions give us the expansions at the conifold $(z=3^{-6})$ and at the large complex structure ($z=0$). Continuing the same programme, we can compute $\rG$, its Lie algebra and the vector fields $\sf R_i$. We can also derive a similar dictionary as~\ref{tab:dictionary}, given in table~\ref{tab:dictionary-bicubic}.

\begin{table}[H]
\centering\small
\renewcommand{\arraystretch}{1.7}
\resizebox{0.95\textwidth}{!}{%
\begin{tabular}{@{}lc@{}}
\toprule
special geometry on $\mathcal M$ & $\beta\in\mathcal O_{\rT}$ with $\beta\circ\sigma=$ left column \\
\midrule
$e^{-K}$ & $s_0^{2}$ \\
$z$ & $\frac{s_4^{6}}{3^{6}s_0^{6}}$ \\
$G_{z\bar z}$ & $\hat G_{z\bar z}=-\frac{3^{3}s_0^{4}s_5}{s_4^{6}\,(s_0^{6}-s_4^{6})}$ \\
$K_z$ & $\hat K_z=\frac{3^{2}s_0^{4}\left(3^{4}s_0^{2}(2s_0^{6}-s_4^{6})+s_3\right)}{s_4^{6}\,(s_0^{6}-s_4^{6})}$ \\
$S^{zz}$ & $\hat S^{zz}=\frac{s_4^{12}\,(s_0^{6}-s_4^{6})\,(3s_5-s_0^{2}s_6)}{3^{15}\,s_0^{18}s_5}$ \\
$S^{z}$ & $\hat S^{z}=\frac{s_4^{6}\bigl(3^{3}s_0^{2}(11s_0^{6}-9s_4^{6})+3s_3-s_0^{2}s_2\bigr)}{3^{13}\,s_0^{14}}$ \\
$S$ & $\hat S=\frac{3^{7}s_0^{4}(s_0^{6}-s_4^{6})(20s_0^{6}-11s_4^{6})+3^{5}s_0^{2}s_3(3s_0^{6}-2s_4^{6})+3s_3^{2}-s_0^{2}s_2\bigl(s_3+3^{3}s_0^{2}(5s_0^{6}-2s_4^{6})\bigr)-3^{3}s_0^{6}s_1(s_0^{6}-s_4^{6})}{2\cdot 3^{11}\,s_0^{10}\,(s_0^{6}-s_4^{6})}$ \\
$\Delta^{z}$ & $\hat\Delta^{z}=\frac{i\,s_4^{6}\,(s_0^{6}-s_4^{6})\,s_7}{3^{10}\,s_0^{6}s_5}$ \\
$\Delta$ & $\hat\Delta=\frac{i\,s_8}{3^{8}\,s_0^{2}}$ \\
\bottomrule
\end{tabular}%
    }
\caption{The generators of the polynomial ring from~\cite{alim_polynomial_2007} in terms of the $s_i$, for the mirror bicubic.}\label{tab:dictionary-bicubic}
\end{table}

\begin{theorem}
    The holomorphic anomaly equation for the bicubic can be written in the following form:
\begin{equation}\label{eq:HAE-bicubic}
\begin{gathered}
{\sf R}_1G^{\chi}={\sf R}_3G^{\chi}=0,\qquad {\sf R}_2G^{\chi}=\chi G^{\chi},\\
{\sf R}_4G^{\chi}=\frac{1}{3^{8}}\left(\sum_{\substack{\chi_1+\chi_2=\chi-2\\ \chi_1,\chi_2\geq0}}{\sf R}_0G^{\chi_1}{\sf R}_0G^{\chi_2}+{\sf R}_0^{2}G^{\chi-2}\right),\qquad
{\sf R}_7G^{\chi}=-\frac{1}{3^{8}}{\sf R}_0G^{\chi-1},
\end{gathered}
\end{equation}
with $G^{\chi}=i^{\chi}s_0^{2\chi}\mathcal G^{\chi}$,
\begin{equation}\label{eq:seeds-bicubic}
\begin{gathered}
{\sf R}_0G^{0}=\frac{s_0^{8}\left(s_0^{6}-s_4^{6}\right)^{2}s_7^{2}}{36\,s_5^{3}}+\frac{3^{3}s_0^{4}\left(s_0^{6}-s_4^{6}\right)s_6}{2s_5^{2}}-\frac{3\left(2322\,s_0^{8}-981\,s_0^{2}s_4^{6}+16\,s_3\right)}{16\,s_5},\\
{\sf R}_0^{2}G^{-1}=\tfrac12{\sf F},
\end{gathered}
\end{equation}
where ${\sf R}_1,{\sf R}_2,{\sf R}_6,{\sf R}_7,{\sf R}_8$ are given by the same formulas as for the mirror quintic and
\begin{equation}\label{eq:R345-bicubic}
\begin{aligned}
{\sf R}_3&=\frac{s_5\left(5\cdot3^{3}s_0^{6}+s_2\right)-s_6\left(5\cdot3^{3}s_0^{8}-2\cdot3^{3}s_0^{2}s_4^{6}+s_3\right)}{3^{3}s_0^{4}\left(s_0^{6}-s_4^{6}\right)}\frac{\partial}{\partial s_1}+s_6\frac{\partial}{\partial s_2}+s_5\frac{\partial}{\partial s_3}+s_7\frac{\partial}{\partial s_8},\\
{\sf R}_4&=\frac{3^{3}s_0^{4}\left(s_0^{6}-s_4^{6}\right)}{s_5}\frac{\partial}{\partial s_6},\\
{\sf R}_5&=-\frac{5\cdot3^{3}s_0^{8}-2\cdot3^{3}s_0^{2}s_4^{6}+s_3}{s_5}\frac{\partial}{\partial s_1}+\frac{3^{3}s_0^{4}\left(s_0^{6}-s_4^{6}\right)}{s_5}\frac{\partial}{\partial s_2}.
\end{aligned}
\end{equation}
\end{theorem}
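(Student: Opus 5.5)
The plan is to transport the argument for the mirror quintic (Theorem~\ref{alg-HAE-GMCD}) to the bicubic. All inputs are replaced by their bicubic analogues: Theorem~\ref{thm:bicubic-R0} for ${\sf R}_0$, ${\sf Y}$ and ${\sf F}$, and Table~\ref{tab:dictionary-bicubic} for the dictionary. Since Proposition~\ref{al-HAE} and its proof only use special geometry on $\mathcal M$ and Walcher's equation~\eqref{HAL-real}, they hold verbatim for the bicubic, with $e(X)=-144$ in~\eqref{eq:G01}.

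\textbf{Step 1: the section $\sigma$ and the vector fields.}
\begin{itemize}
\item Define $\sigma:\mathcal M\to\rT$ exactly as in Section~\ref{sec:dict}, with the powers of $5$ replaced by the normalization making $(s_0^2)\circ\sigma=e^{-K}$. The extra generator $\alpha_0$ is the abstract class in $W_2\cap F^1$ whose Gauss-Manin derivative is prescribed by the inhomogeneous term~\eqref{eq:ext-PF-bicubic}.
\item Repeat the proof of Lemma~\ref{lem:Gzz and R0} to get ${\rm d}\sigma(\partial_z)=c\,\hat G_{z\bar z}{\sf R}_0$ for an explicit constant $c$. This also identifies ${\sf Y}\circ\sigma$ with a multiple of $e^{2K}G_{z\bar z}^{-3}C_{zzz}$ and ${\sf F}\circ\sigma$ with a multiple of $e^KG_{z\bar z}^{-2}\Delta_{zz}$.
\item The dictionary then follows as in Lemma~\ref{lem:dict}: $\hat G_{z\bar z}$ from ${\sf R}_0(z)$, $\hat K_z$ from ${\sf R}_0(s_0^2)$, and the propagators and terminators by integrating~\eqref{eq:special-geometry-integrated},~\eqref{eq:propagators} and~\eqref{eq:terminators}, using Alim--Länge's choice of ambiguities for the bicubic.
\item Check, as in Lemma~\ref{lem:dict-jac}, that the Jacobian of the nine entries of Table~\ref{tab:dictionary-bicubic} is a unit on $\rT$.
\item Obtain the ${\sf R}_i$ from Theorem~\ref{thm:VX}. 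The proof of that theorem only uses the group action and the period map, so it applies unchanged. Computing the flows $\exp(uX_i)\cdot p$ in the coordinates $s_i$ gives~\eqref{eq:R345-bicubic} and the unchanged ${\sf R}_1,{\sf R}_2,{\sf R}_6,{\sf R}_7,{\sf R}_8$.
\end{itemize}

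\textbf{Step 2: translate the derivatives (analogue of Lemma~\ref{der-prop}).}
\begin{itemize}
\item With $G^\chi=i^\chi s_0^{2\chi}\hat{\mathcal G}^\chi$, use ${\sf R}_0(s_0)$ and $\hat K_z$ to show that $s_0^{-2\chi}{\sf R}_0G^\chi$ corresponds to a constant multiple of $G_{z\bar z}^{-1}D_z\mathcal G^\chi$.
\item $\hat S^{zz}$ is the only generator containing $s_6$, so $\partial_{s_6}$ equals $(\partial\hat S^{zz}/\partial s_6)\,\partial/\partial S^{zz}$. Comparing with ${\sf R}_4$ gives $G_{z\bar z}^{-2}\partial/\partial S^{zz}\propto s_0^{4}{\sf R}_4$.
\item Likewise $\hat\Delta^z$ is the only generator containing $s_7$, which gives $G_{z\bar z}^{-1}\partial/\partial\Delta^z\propto s_0^2{\sf R}_7$.
\item For the third equation of Proposition~\ref{al-HAE}, express ${\sf R}_3$ through the chain rule in the generators, as in Lemma~\ref{lem:R3}, and verify that it is a multiple of $\partial_{K_z}+S^z\partial_S+S^{zz}\partial_{S^z}+\Delta^z\partial_\Delta$.
\item Substituting into Proposition~\ref{al-HAE} gives the ${\sf R}_4$ and ${\sf R}_7$ equations. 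The constants combine into the stated factor $3^{-8}$, which must be checked. Since the identities are identities of regular functions (Jacobian lemma), they hold on all of $\rT$ and not just on the image of $\sigma$.
\item ${\sf R}_1G^\chi=0$ and ${\sf R}_2G^\chi=\chi G^\chi$ follow because every entry of Table~\ref{tab:dictionary-bicubic} is ${\sf R}_1$-invariant and of ${\sf R}_2$-weight zero, while $s_0^{2\chi}$ has weight $\chi$.
\end{itemize}

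\textbf{Step 3: the seeds.}
\begin{itemize}
\item ${\sf R}_0^2G^{-1}=\frac12{\sf F}$ follows from $\mathcal G^{-1}_2=\frac12\Delta_{zz}$ and the identification of ${\sf F}\circ\sigma$ in Step 1.
\item For ${\sf R}_0G^0$, integrate~\eqref{eq:G01} in the generators: $\Delta^z_{\bar z}\Delta_{zz}$ integrates to a multiple of $(\Delta^z)^2C_{zzz}$, $G_{z\bar z}$ to $K_z$, and $C_{zzz}C^{zz}_{\bar z}$ to $C_{zzz}S^{zz}$. Fix the holomorphic ambiguity from the known genus one and annulus data, then translate through the dictionary and the first identity of Step 2.
\end{itemize}

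\textbf{Main obstacle.} I expect Step 1 to be the hardest part, specifically pinning down $\sigma$ and the normalization constants. Because there is no geometric relative cohomology for the bicubic, the Gauss-Manin connection on $\alpha_0$ must be defined abstractly, so that the relation $\int_{\delta_0}\Omega\propto\tau$ reproduces~\eqref{eq:deltazz-def}. The constants chosen here propagate into the $3^{-8}$ factors and the seeds, so the check should start with the small case $\chi=1$: verify that the resulting $G^1$ reproduces Walcher's bicubic numbers.
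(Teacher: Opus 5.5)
Your proposal is correct and is the route the paper intends. The paper gives no separate proof for the bicubic and only says it is analogous to Theorem~\ref{alg-HAE-GMCD}. With Table~\ref{tab:dictionary-bicubic} one finds ${\sf R}_0(f)\circ\sigma=3G_{z\bar z}^{-1}\partial_z(f\circ\sigma)$, $G_{z\bar z}^{-2}\partial_{S^{zz}}=-3^6s_0^4{\sf R}_4$ and $G_{z\bar z}^{-1}\partial_{\Delta^z}=3^7is_0^2{\sf R}_7$, and these do combine into the factors $3^{-8}$.

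One small correction: $s_0^2$ and $\hat G_{z\bar z}$ are not ${\sf R}_1$-invariant of ${\sf R}_2$-weight zero. The argument still goes through as in the paper, because only $z$, $\hat K_z$ and the propagators and terminators enter $\hat{\mathcal G}^\chi$, and those do have this property.
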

Solving the equation, we get regular functions on the moduli space and the ambiguity has the form (with the standard assumptions and BPS form~\eqref{eq:BPS-real}):
\begin{equation}\label{eq:amb-bicubic}
s_0^{2\chi}h^{\chi}(s_0,s_4),\qquad h^{\chi}(s_0,s_4)=\frac{1}{s_0^{3\chi}}\sum_{\substack{0\le k\le 3\chi\\ k\equiv\chi\ (2)}}b_k\,s_4^{3k}\left(s_0^{6}-s_4^{6}\right)^{\frac{\chi-k}{2}}.
\end{equation}

We then fix the ambiguity with the same strategy: the conifold gives us $\chi$ coefficients and the constant term gives one more if $\chi$ is even. The conifold and constant term conditions are the same as for the quintic, but the normalization involves $(-3i)^\chi$ and we have $e(X)=-144$. We fix the rest of coefficients using the fact that the vanishing of the complex invariants implies the vanishing of the real invariants. 

That allows us to compute GV up $\chi = 8$. All the numbers obtained are integer and agree with the complex ones mod 2 (see Tables~\ref{tab:bicubic-gv-01}--\ref{tab:bicubic-gv-89} in Appendix~\ref{app:tables}). For $\chi=9$, we are short one condition and the invariant $n_{10,9}^{\mathbb R}$ has to be determined. 

In the complex case, since the genus of the intersection of two cubic surfaces in $\mathbb P^3$ (i.e., 3-plane section of the bicubic) has genus 10. That means that the space of such curves is a Grassmannian $G(4,6)$ and the invariant $n_{10,9}$ is the Euler characteristic $\chi(G(4,6)) = 15$. Using the same type of prediction as we used for the quintic, we can take the Euler characteristic of the real Grassmannian to obtain $n_{10,9}^{\mathbb R} = \pm 3$. Following the signs from~\cite{espreafico_motivic_2024}, we would have $(-1)^{8/2}=1$, i.e., $n_{10,9}^{\mathbb R}=+3$. Again, integrality and the fact the invariants agree modulo 2 are not enough to fix the invariants.

Using that prediction, we can go up to $\chi=11$, but no further, unless we can get more geometric data from the $A$-model or conditions for the the expansions around $z=\infty$ See the table~\ref{tab:count-bicubic} below.

\begin{table}[ht!]
\centering
\small
\begin{tabular}{cccccccl}
\toprule
$\chi$ & $g$ & $\#\,b_k$ & conifold & const.\ map & degrees $d$ with $n^{\mathbb C}_{g,d}=0$ & $\#$ & remaining \\
\midrule
$1$ & $2$ & $2$ & $1$ & -- & $1,3$ & $2$ & $-1$ (check) \\
$2$ & $3$ & $4$ & $2$ & $1$ & $2,4$ & $2$ & $-1$ (check) \\
$3$ & $4$ & $5$ & $3$ & -- & $1,3,5$ & $3$ & $-1$ (check) \\
$4$ & $5$ & $7$ & $4$ & $1$ & $2,4,6$ & $3$ & $-1$ (check) \\
$5$ & $6$ & $8$ & $5$ & -- & $1,3,5,7$ & $4$ & $-1$ (check) \\
$6$ & $7$ & $10$ & $6$ & $1$ & $2,4,6$ & $3$ & $0$ \\
$7$ & $8$ & $11$ & $7$ & -- & $1,3,5,7$ & $4$ & $0$ \\
$8$ & $9$ & $13$ & $8$ & $1$ & $2,4,6,8$ & $4$ & $0$ \\
$9$ & $10$ & $14$ & $9$ & -- & $1,3,5,7$ & $4$ & $1$: $n^{\mathbb R}_{10,9}$ \\
$10$ & $11$ & $16$ & $10$ & $1$ & $2,4,6,8,10$ & $5$ & $0$ \\
$11$ & $12$ & $17$ & $11$ & -- & $1,3,5,7,9,11$ & $6$ & $0$ \\
$12$ & $13$ & $19$ & $12$ & $1$ & $2,4,6,8,10$ & $5$ & $1$ \\
\bottomrule
\end{tabular}
\caption{Number of coefficients fixed by each boundary condition for the ambiguity~\eqref{eq:amb-bicubic} of the bicubic}%
\label{tab:count-bicubic}
\end{table}

\subsection{ Further problems}

This paper uses the Gauss-Manin connection in Disguise framework to solve the extended holomorphic anomaly equation and then uses the conifold gap condition and the vanishing of complex invariants to further fix ambiguities and compute predictions for Gopakumar-Vafa invariants. It leaves however, several questions unaddressed. 

\begin{enumerate}
    \item A-model confirmation. The numbers computed here are simply predictions of real Gopakumar-Vafa invariants, as they have not been rigorously proven to correspond to any geometry on the A-model. However, following the ideas from the proofs for the closed invariants (\cite{chang_bcov_2026,chang_conifold_2026,guo_structure_2018}) it might be possible to prove similar results for the real case. 
    
    \item Further conditions. In order to fix the ambiguities of the closed amplitudes, we have to use that the expansions around $z=\infty$ are regular. This doesn't happen for the real amplitudes. It would be interesting to find whether there is any information that could be extracted from the behavior around this point.
    
    \item Quasimodularity. In section~\ref{sec: periods}, we mention that the variable $\tau_0$ has a modular behavior with respect to the monodromy group, since it acts on the basis $\delta$ and therefore changes $\tau_0$. This type of modularity (also considered in the closed case by~\cite{movasati_modular-type_2015}) although analogous to quasimodular forms (the vector field $\sf R_0$ plays the role of the Ramanujan vector field) is not known to correspond to any special functions. 

    \item Confirmation for the values $n^{\mathbb R}_{6,5}$ and other values at the Castelnuovo bound~\cite{liu_castelnuovo_2022}. The prediction from~\cite{espreafico_motivic_2024} goes in this direction, but a proper computation of Gromov-Witten invariants has not been carried out.
    \item The AMSY Lie algebra in the A-model. It is worth studying to which object the AMSY Lie algebra corresponds in the A-model. Since it encodes the change of basis of $H^3_{\rm dR}(Y_z)$, it should be related to some structure in the quantum cohomology of $X$. 
    
\end{enumerate}

\newpage
\appendix
\section{Tables of Real GV invariants}\label{app:tables}

The invariants of real genus $g=\chi+1\le 5$ for the quintic and $g=\chi+1\le9$ for the bicubic are fixed by the conditions of Section~\ref{sec:ambiguity} alone. For $g=6,7,8$ for the quintic and $g=10,11,12$ for the bicubic we list the numbers for the choices $n^{\mathbb R}_{6,5}=\pm2$ and $n^{\mathbb R}_{10,9}=\pm3$. All numbers are integers, vanish whenever $d+g$ is even, and agree with the complex invariants $n^{\mathbb C}_{g,d}$ modulo $2$. The genus $0,1,2$ columns agree with~\cite[Tables 5--7]{walcher_evidence_2009}.

\begin{table}[h]
\centering
\scriptsize
\setlength{\tabcolsep}{2.5pt}
\renewcommand{\arraystretch}{0.95}
\begin{tabular}{rr@{\hspace{1.2em}}rr}
\toprule
$d$ & $n^{\mathbb R}_{0,d}$ & $d$ & $n^{\mathbb R}_{1,d}$ \\
\midrule
1 & 15 & 2 & 0 \\
3 & 765 & 4 & 17725 \\
5 & 544125 & 6 & 36079420 \\
7 & 487998390 & 8 & 65378348025 \\
9 & 536543881350 & 10 & 116755627418596 \\
11 & 664513551962205 & 12 & 209184366237053675 \\
13 & 890983311920874465 & 14 & 377367159810138778200 \\
15 & 1264123608455988294750 & 16 & 685680464985559781173935 \\
17 & 1871028346129178222325990 & 18 & 1254195667395460913669340460 \\
19 & 2861726040699237604475400135 & 20 & 2307695080571694537792278553240 \\
21 & 4493230049007630091514258681445 & 22 & 4268294781233155927303999369760000 \\
23 & 7207522320216113436677394290218890 & 24 & 7930864794860957117764374341488743775 \\
25 & 11769233993986933173028634425462458750 & 26 & 14795914375403697763257302788288699236460 \\
27 & 19509482253918078839431328789761148877375 & 28 & 27702421548882194251036108416804908638214050 \\
29 & 32759521641137690288799693558477149120583585 & 30 & 52032854441736087682542435669500514830832641768 \\
\bottomrule
\end{tabular}
\caption{Real Gopakumar--Vafa invariants of the quintic, $g=0,1$ ($\chi=-1,0$), $d\le 30$.}%
\label{tab:quintic-gv-01}
\end{table}

\begin{table}[h]
\centering
\scriptsize
\setlength{\tabcolsep}{2.5pt}
\renewcommand{\arraystretch}{0.95}
\begin{tabular}{rr@{\hspace{1.2em}}rr}
\toprule
$d$ & $n^{\mathbb R}_{2,d}$ & $d$ & $n^{\mathbb R}_{3,d}$ \\
\midrule
1 & 0 & 2 & 0 \\
3 & 0 & 4 & 15 \\
5 & $-55640$ & 6 & $-207405$ \\
7 & $-159440655$ & 8 & $-4619005180$ \\
9 & $-387012696805$ & 10 & $-27137742347875$ \\
11 & $-878665820903170$ & 12 & $-108886944055496170$ \\
13 & $-1923803069641669385$ & 14 & $-362235070015820431070$ \\
15 & $-4118149267275418609230$ & 16 & $-1078138887521929667365285$ \\
17 & $-8682546731231153534612880$ & 18 & $-2981711892726467319106523605$ \\
19 & $-18109093970109245075895234500$ & 20 & $-7829093080747030560785379434175$ \\
21 & $-37468264493303566297526373243240$ & 22 & $-19779824170903622987204843075280500$ \\
23 & $-77048313163738850395654093685360465$ & 24 & $-48511885003920820556658437675177155180$ \\
25 & $-157676389901970679932873142860244857050$ & 26 & $-116215863547122205713863347058482398407325$ \\
27 & $-321431066325254450956025609607695714330450$ & 28 & $-273154022420027278567312773236139254096336875$ \\
29 & $-653182477927581090668112378964111073302057375$ & 30 & $-631996528305705272138475935490554475556077070350$ \\
\bottomrule
\end{tabular}
\caption{Real Gopakumar--Vafa invariants of the quintic, $g=2,3$ ($\chi=1,2$), $d\le 30$.}%
\label{tab:quintic-gv-23}
\end{table}

\begin{table}[ht]
\centering
\scriptsize
\setlength{\tabcolsep}{2.5pt}
\renewcommand{\arraystretch}{0.95}
\begin{tabular}{rr@{\hspace{1.2em}}rr}
\toprule
$d$ & $n^{\mathbb R}_{4,d}$ & $d$ & $n^{\mathbb R}_{5,d}$ \\
\midrule
1 & 0 & 2 & 0 \\
3 & 0 & 4 & 0 \\
5 & $-90$ & 6 & $-15$ \\
7 & 1234970 & 8 & 52170905 \\
9 & 35788777610 & 10 & 1905593363625 \\
11 & 245624090387605 & 12 & 21681640787251870 \\
13 & 1133471895851344625 & 14 & 150488615238868518655 \\
15 & 4277579205095611088195 & 16 & 791453729946273242589215 \\
17 & 14268856132624358361630530 & 18 & 3483363440610165211459444580 \\
19 & 43768272540752014024081660575 & 20 & 13542582087751486390324942910400 \\
21 & 126333409831717590697685306283305 & 22 & 48055969739730032337427038541278275 \\
23 & 348203669368376202626716715601122470 & 24 & 159000242923651516015027691220348626435 \\
25 & 925581474224825543355918293691083787700 & 26 & 497763403637168199467209054344086921612250 \\
27 & 2389558319916599438092598184185848855064805 & 28 & 1490015653548648524331298995427104771134876780 \\
29 & 6022658118087068522072296623269607788900382000 & 30 & 4298207770675687389258467568039717251504125628050 \\
\bottomrule
\end{tabular}
\caption{Real Gopakumar--Vafa invariants of the quintic, $g=4,5$ ($\chi=3,4$), $d\le 30$.}%
\label{tab:quintic-gv-45}
\end{table}

\begin{table}[ht]
\centering
\fontsize{6}{7.5}\selectfont
\setlength{\tabcolsep}{4pt}
\begin{tabular}{rrr}
\toprule
$d$ & $n^{\mathbb R}_{6,d}$ \ ($n^{\mathbb R}_{6,5}=2$) & $n^{\mathbb R}_{6,d}$ \ ($n^{\mathbb R}_{6,5}=-2$) \\
\midrule
1 & 0 & 0 \\
3 & 0 & 0 \\
5 & 2 & $-2$ \\
7 & $-2110$ & 3190 \\
9 & $-573978050$ & $-576516050$ \\
11 & $-22336444840775$ & $-22335409750775$ \\
13 & $-276507245353062185$ & $-276507291791162185$ \\
15 & $-2096870088848795432851$ & $-2096869808580739558387$ \\
17 & $-12025848519818531849937470$ & $-12025848202834576238514170$ \\
19 & $-57486679661068463609678134265$ & $-57486679184769394788197787765$ \\
21 & $-241632728499728918822738814145560$ & $-241632727782124585509524743992360$ \\
23 & $-922798742486602426523809095545774840$ & $-922798741370524997043788278439549840$ \\
25 & $-3271854171907618934691410665222504721285$ & $-3271854170132026352081651678926082649885$ \\
27 & $-10932794318777291405045959889912217185343315$ & $-10932794315897962942747715252684674759638915$ \\
29 & $-34804504839287002240085192362565712527711956350$ & $-34804504834542253847053498544469097685231602350$ \\
\bottomrule
\end{tabular}
\caption{Real Gopakumar--Vafa invariants of the quintic, $g=6$ ($\chi=5$), $d\le 30$, for the two choices of $n^{\mathbb R}_{6,5}$.}%
\label{tab:quintic-gv-6}
\end{table}

\begin{table}[ht]
\centering
\fontsize{6}{7.5}\selectfont
\setlength{\tabcolsep}{4pt}
\begin{tabular}{rrr}
\toprule
$d$ & $n^{\mathbb R}_{7,d}$ \ ($n^{\mathbb R}_{6,5}=2$) & $n^{\mathbb R}_{7,d}$ \ ($n^{\mathbb R}_{6,5}=-2$) \\
\midrule
2 & 0 & 0 \\
4 & 0 & 0 \\
6 & 0 & 0 \\
8 & 193035 & 186135 \\
10 & $-25741371600$ & $-25734041760$ \\
12 & $-1674089561261385$ & $-1674094428693585$ \\
14 & $-30443700186413394545$ & $-30443699563165500545$ \\
16 & $-317762466694428868716230$ & $-317762468737303822785230$ \\
18 & $-2389601151152336765328472185$ & $-2389601153711614454223799785$ \\
20 & $-14435484556379124698405672383720$ & $-14435484560954527807382533185520$ \\
22 & $-74510914812998634848602947136102780$ & $-74510914820879840808076491517919980$ \\
24 & $-341567909374387905880752903497377302255$ & $-341567909388211153321482722969079020055$ \\
26 & $-1427088890779043827679112806887669923003530$ & $-1427088890803524178830561894237878549055530$ \\
28 & $-5534277227863197152601275025255546763999699055$ & $-5534277227906935377661031069258596987924313255$ \\
30 & $-20188224866531304679803091937161269797567951347815$ & $-20188224866610042347918402038043516092728004184295$ \\
\bottomrule
\end{tabular}
\caption{Real Gopakumar--Vafa invariants of the quintic, $g=7$ ($\chi=6$), $d\le 30$, for the two choices of $n^{\mathbb R}_{6,5}$.}%
\label{tab:quintic-gv-7}
\end{table}

\begin{table}[ht]
\centering
\fontsize{6}{7.5}\selectfont
\setlength{\tabcolsep}{4pt}
\begin{tabular}{rrr}
\toprule
$d$ & $n^{\mathbb R}_{8,d}$ \ ($n^{\mathbb R}_{6,5}=2$) & $n^{\mathbb R}_{8,d}$ \ ($n^{\mathbb R}_{6,5}=-2$) \\
\midrule
1 & 0 & 0 \\
3 & 0 & 0 \\
5 & 0 & 0 \\
7 & 0 & 0 \\
9 & $-4498215$ & $-3722415$ \\
11 & 413127892630 & 412239166630 \\
13 & 27024796890661690 & 27025277309082090 \\
15 & 511984204393090577600 & 511983777326966592340 \\
17 & 5649589245998794394433705 & 5649588752022721967967605 \\
19 & 45136907631169163000488005770 & 45136906265868768452713847370 \\
21 & 289925814085028058172268323254740 & 289925811041300738191124386771740 \\
23 & 1589607490077113505657634024935134835 & 1589607483411264769780776529858944935 \\
25 & 7726309026585241955513503632136293188225 & 7726309012320903810624676026609061450725 \\
27 & 34153430887154621589919564672211768826755950 & 34153430857074318497562854292961174683362650 \\
29 & 139814327790584713542910271137026975249044571650 & 139814327727825109346606071110379408157435699750 \\
\bottomrule
\end{tabular}
\caption{Real Gopakumar--Vafa invariants of the quintic, $g=8$ ($\chi=7$), $d\le 30$, for the two choices of $n^{\mathbb R}_{6,5}$.}%
\label{tab:quintic-gv-8}
\end{table}

\begin{table}[p]
\centering
\scriptsize
\setlength{\tabcolsep}{2.5pt}
\renewcommand{\arraystretch}{0.95}
\begin{tabular}{rr@{\hspace{1.2em}}rr}
\toprule
$d$ & $n^{\mathbb R}_{0,d}$ & $d$ & $n^{\mathbb R}_{1,d}$ \\
\midrule
1 & 9 & 2 & 0 \\
3 & 90 & 4 & 693 \\
5 & 15759 & 6 & 381912 \\
7 & 3297987 & 8 & 167597505 \\
9 & 841201389 & 10 & 70912518192 \\
11 & 241496789706 & 12 & 29843206833573 \\
13 & 75029122895328 & 14 & 12591774914348088 \\
15 & 24662585381363001 & 16 & 5337998490724886529 \\
17 & 8456520108661667946 & 18 & 2274458742050165147400 \\
19 & 2996347397798754061326 & 20 & 973844673198672245668416 \\
21 & 1089868924565421236420841 & 22 & 418831614404968713981811560 \\
23 & 405000793360436396494255011 & 24 & 180858895037533129448973514257 \\
25 & 153206455414898076025297635297 & 26 & 78381436853507144470910401790592 \\
27 & 58835468751409198689645123627273 & 28 & 34079961019715434110261068388115713 \\
29 & 22887668860106412716760233910613338 & 30 & 14861262406840812199145667595997399640 \\
\bottomrule
\end{tabular}
\caption{Real Gopakumar--Vafa invariants of the bicubic, $g=0,1$ ($\chi=-1,0$), $d\le 30$.}%
\label{tab:bicubic-gv-01}
\end{table}

\begin{table}[p]
\centering
\scriptsize
\setlength{\tabcolsep}{2.5pt}
\renewcommand{\arraystretch}{0.95}
\begin{tabular}{rr@{\hspace{1.2em}}rr}
\toprule
$d$ & $n^{\mathbb R}_{2,d}$ & $d$ & $n^{\mathbb R}_{3,d}$ \\
\midrule
1 & 0 & 2 & 0 \\
3 & 0 & 4 & 0 \\
5 & $-693$ & 6 & $-90$ \\
7 & $-568557$ & 8 & $-3465342$ \\
9 & $-328426623$ & 10 & $-6223431780$ \\
11 & $-175272593346$ & 12 & $-6586918712244$ \\
13 & $-89789350783752$ & 14 & $-5480984828361042$ \\
15 & $-44869010445376389$ & 16 & $-3977616521462805252$ \\
17 & $-22052256784116464466$ & 18 & $-2644109379105225418062$ \\
19 & $-10711077115968546752799$ & 20 & $-1654013788366163106634920$ \\
21 & $-5157161549849152593898263$ & 22 & $-989699117575942119575532006$ \\
23 & $-2466460720670287348412238762$ & 24 & $-572516522772993260360980775298$ \\
25 & $-1173399265840401421711616225907$ & 26 & $-322519359074944001846113211329872$ \\
27 & $-555874087393454301039248647944426$ & 28 & $-177853573161972651052657749309118668$ \\
29 & $-262422992493867519522401571053318313$ & 30 & $-96376631267356215547046740131881251530$ \\
\bottomrule
\end{tabular}
\caption{Real Gopakumar--Vafa invariants of the bicubic, $g=2,3$ ($\chi=1,2$), $d\le 30$.}%
\label{tab:bicubic-gv-23}
\end{table}

\begin{table}[p]
\centering
\scriptsize
\setlength{\tabcolsep}{2.5pt}
\renewcommand{\arraystretch}{0.95}
\begin{tabular}{rr@{\hspace{1.2em}}rr}
\toprule
$d$ & $n^{\mathbb R}_{4,d}$ & $d$ & $n^{\mathbb R}_{5,d}$ \\
\midrule
1 & 0 & 2 & 0 \\
3 & 0 & 4 & 0 \\
5 & 0 & 6 & 0 \\
7 & 486 & 8 & 531 \\
9 & 9129930 & 10 & 112930776 \\
11 & 18944223273 & 12 & 452773779129 \\
13 & 22937859003321 & 14 & 899317173765132 \\
15 & 21585466396474539 & 16 & 1248803221560269274 \\
17 & 17522646108367547979 & 18 & 1392964804139982886476 \\
19 & 12903123110276661075792 & 20 & 1340073665151925714576929 \\
21 & 8865557848861755843375765 & 22 & 1158874281900367107309230796 \\
23 & 5783991815107616596266826827 & 24 & 924809309038093118575776591267 \\
25 & 3624915555585279079454413062639 & 26 & 693176618824718777907165010430316 \\
27 & 2200054635694421457270839610496578 & 28 & 494102685297484943024601164683997358 \\
29 & 1300730730008018694499260738150236280 & 30 & 338005733489707824720132058393801959384 \\
\bottomrule
\end{tabular}
\caption{Real Gopakumar--Vafa invariants of the bicubic, $g=4,5$ ($\chi=3,4$), $d\le 30$.}%
\label{tab:bicubic-gv-45}
\end{table}

\begin{table}[p]
\centering
\scriptsize
\setlength{\tabcolsep}{2.5pt}
\renewcommand{\arraystretch}{0.95}
\begin{tabular}{rr@{\hspace{1.2em}}rr}
\toprule
$d$ & $n^{\mathbb R}_{6,d}$ & $d$ & $n^{\mathbb R}_{7,d}$ \\
\midrule
1 & 0 & 2 & 0 \\
3 & 0 & 4 & 0 \\
5 & 0 & 6 & 0 \\
7 & 0 & 8 & $-18$ \\
9 & $-1404$ & 10 & 8991 \\
11 & $-452104281$ & 12 & $-7461323259$ \\
13 & $-1950969312603$ & 14 & $-56551941465030$ \\
15 & $-4212540620573814$ & 16 & $-183930175211466114$ \\
17 & $-6360356680057713753$ & 18 & $-385632982358776268919$ \\
19 & $-7691298186253072608504$ & 20 & $-613698753034561852037169$ \\
21 & $-7989689308213682600722830$ & 22 & $-807099207336816509613430350$ \\
23 & $-7429512345382259574495058467$ & 24 & $-922657487757200612323500139710$ \\
25 & $-6349436481346659179612822318715$ & 26 & $-947291950869136381601887705666542$ \\
27 & $-5077298829915835644916385130369672$ & 28 & $-893269894174768664668363918437551664$ \\
29 & $-3847566549404004040534425604421939361$ & 30 & $-786147898433782547134066672770555339315$ \\
\bottomrule
\end{tabular}
\caption{Real Gopakumar--Vafa invariants of the bicubic, $g=6,7$ ($\chi=5,6$), $d\le 30$.}%
\label{tab:bicubic-gv-67}
\end{table}

\begin{table}[p]
\centering
\scriptsize
\setlength{\tabcolsep}{2.5pt}
\renewcommand{\arraystretch}{0.95}
\begin{tabular}{rr@{\hspace{1.2em}}rr}
\toprule
$d$ & $n^{\mathbb R}_{8,d}$ & $d$ & $n^{\mathbb R}_{9,d}$ \\
\midrule
1 & 0 & 2 & 0 \\
3 & 0 & 4 & 0 \\
5 & 0 & 6 & 0 \\
7 & 0 & 8 & 0 \\
9 & 117 & 10 & 18 \\
11 & $-94167$ & 12 & 2098863 \\
13 & 41134610916 & 14 & 1099444123116 \\
15 & 322656117414678 & 16 & 12130825858886664 \\
17 & 1103410184875751106 & 18 & 56522911777127800038 \\
19 & 2449277640353245268835 & 20 & 163946278179334272081096 \\
21 & 4135021184881633176440586 & 22 & 349433462040407154072938745 \\
23 & 5767506742075560994283749899 & 24 & 598656687256980638827512658218 \\
25 & 6983099250993390578423272194825 & 26 & 870866275498983674669150768576907 \\
27 & 7579321616055330992335120970676930 & 28 & 1115351534268377802592620625264437612 \\
29 & 7540057172588684946098760785904417369 & 30 & 1289828720158755688399776034449705347151 \\
\bottomrule
\end{tabular}
\caption{Real Gopakumar--Vafa invariants of the bicubic, $g=8,9$ ($\chi=7,8$), $d\le 30$.}%
\label{tab:bicubic-gv-89}
\end{table}

\begin{table}[p]
\centering
\scriptsize
\setlength{\tabcolsep}{2.5pt}
\renewcommand{\arraystretch}{0.95}
\begin{tabular}{rrr}
\toprule
$d$ & $n^{\mathbb R}_{10,d}$, $n^{\mathbb R}_{10,9}=3$ & $n^{\mathbb R}_{10,d}$, $n^{\mathbb R}_{10,9}=-3$ \\
\midrule
1 & 0 & 0 \\
3 & 0 & 0 \\
5 & 0 & 0 \\
7 & 0 & 0 \\
9 & 3 & $-3$ \\
11 & $-810$ & 2106 \\
13 & $-937080$ & $-1501326$ \\
15 & $-7882582875564$ & $-7882513387608$ \\
17 & $-87400261892751750$ & $-87400266979774986$ \\
19 & $-418294824539306480082$ & $-418294823994564417270$ \\
21 & $-1260079376139477408155553$ & $-1260079376097579634722051$ \\
23 & $-2803789890232313576869945998$ & $-2803789890210185276664046014$ \\
25 & $-5025551327410498077310670325861$ & $-5025551327402899352587731447921$ \\
27 & $-7652927560219576398637084649618985$ & $-7652927560216772703097620220917243$ \\
29 & $-10257055869055199434701843827552583438$ & $-10257055869054150146507824533144186144$ \\
\bottomrule
\end{tabular}
\caption{Real Gopakumar--Vafa invariants of the bicubic, $g=10$ ($\chi=9$), $d\le 30$, for the two values of $n^{\mathbb R}_{10,9}$.}%
\label{tab:bicubic-gv-10}
\end{table}

\begin{table}[p]
\centering
\scriptsize
\setlength{\tabcolsep}{2.5pt}
\renewcommand{\arraystretch}{0.95}
\begin{tabular}{rrr}
\toprule
$d$ & $n^{\mathbb R}_{11,d}$, $n^{\mathbb R}_{10,9}=3$ & $n^{\mathbb R}_{11,d}$, $n^{\mathbb R}_{10,9}=-3$ \\
\midrule
2 & 0 & 0 \\
4 & 0 & 0 \\
6 & 0 & 0 \\
8 & 0 & 0 \\
10 & 0 & 0 \\
12 & 1314 & $-90$ \\
14 & $-2232033111$ & $-2231444079$ \\
16 & $-299775099190833$ & $-299775216863097$ \\
18 & $-4155217198362443538$ & $-4155217185419663922$ \\
20 & $-25233487661019885895845$ & $-25233487662653704652013$ \\
22 & $-94634641356961048270265979$ & $-94634641357044710128841043$ \\
24 & $-256676121714813802901671094676$ & $-256676121714886684744440268260$ \\
26 & $-550297398498756656087726195192073$ & $-550297398498784445638641529853577$ \\
28 & $-986172662784564371133615953678957631$ & $-986172662784575976594639906951685683$ \\
30 & $-1534006017802059698701295033835296160702$ & $-1534006017802064530682013147905237704470$ \\
\bottomrule
\end{tabular}
\caption{Real Gopakumar--Vafa invariants of the bicubic, $g=11$ ($\chi=10$), $d\le 30$, for the two values of $n^{\mathbb R}_{10,9}$.}%
\label{tab:bicubic-gv-11}
\end{table}

\begin{table}[p]
\centering
\scriptsize
\setlength{\tabcolsep}{2.5pt}
\renewcommand{\arraystretch}{0.95}
\begin{tabular}{rrr}
\toprule
$d$ & $n^{\mathbb R}_{12,d}$, $n^{\mathbb R}_{10,9}=3$ & $n^{\mathbb R}_{12,d}$, $n^{\mathbb R}_{10,9}=-3$ \\
\midrule
1 & 0 & 0 \\
3 & 0 & 0 \\
5 & 0 & 0 \\
7 & 0 & 0 \\
9 & 0 & 0 \\
11 & 0 & 0 \\
13 & $-29160$ & 79218 \\
15 & 20989564065 & 20945596779 \\
17 & 2676588562043271 & 2676597160615767 \\
19 & 36482537488337918127 & 36482536334345350869 \\
21 & 223805406812095596838455 & 223805406851964010295649 \\
23 & 859480127215673942541360375 & 859480127173727625841196193 \\
25 & 2404126862475882848884492401933 & 2404126862457055846585283889153 \\
27 & 5335670295274823737557358514277717 & 5335670295264726348696131892133527 \\
29 & 9917036170549689198004286519041081632 & 9917036170544572941046180419802870434 \\
\bottomrule
\end{tabular}
\caption{Real Gopakumar--Vafa invariants of the bicubic, $g=12$ ($\chi=11$), $d\le 30$, for the two values of $n^{\mathbb R}_{10,9}$.}%
\label{tab:bicubic-gv-12}
\end{table}

\clearpage

\bibliography{literature.bib}{}
\bibliographystyle{alpha}
\end{document}